\documentclass[12pt, reqno]{amsart}
\usepackage{graphicx, amssymb, amsthm, amsfonts, latexsym, epsfig, amsmath, amscd, amsbsy}
\usepackage[usenames]{color}
\usepackage{tikz, caption, subcaption}
\usepackage{enumerate, enumitem}
\setlist[itemize, 2]{label=$\circ$}
\usepackage{epstopdf}

\usetikzlibrary{decorations.pathmorphing}
\usetikzlibrary{calc}

\usetikzlibrary{arrows, arrows.meta, patterns, calc}
\pgfdeclarelayer{bg}
\pgfsetlayers{bg,main}

\pgfdeclarelayer{bg}
\pgfsetlayers{bg,main}

\numberwithin{equation}{section}
\theoremstyle{plain}
\newtheorem{theorem}{Theorem}[section]
\newtheorem{lem}[theorem]{Lemma}
\newtheorem{cor}[theorem]{Corollary}
\newtheorem{prop}[theorem]{Proposition}
\theoremstyle{definition}

\newtheorem{rem}[theorem]{Remark}

\def\eps{\varepsilon}

\renewcommand{\phi}{\varphi}

\def\Int{\mathop\mathrm{Int}}
\def\Cl{\mathop\mathrm{Cl}}
\def\dist{\mathop\mathrm{dist}}

\def\Conv{\mathop\mathrm{Conv}}

\usepackage{hyperref}
\hypersetup{
    colorlinks=true,
    citecolor=violet,
    pdftitle={Zero Entropy and Basin of Attraction for Lozi Maps},
    pdfauthor={Kristijan Kilassa Kvaternik}
    }

\begin{document}
	
	\title{Basins of attraction and escape in the Lozi map}
	\author{Kristijan Kilassa Kvaternik}
	\address[K. Kilassa Kvaternik]{
		University of Zagreb,
		Faculty of Electrical Engineering and Computing,
		Department of Applied Mathematics,
		Unska 3,
		10\,000 Zagreb,
		Croatia -- and -- Jagiellonian University,
		Faculty of Mathematics and Computer Science,
		ul.\ prof.\ Stanis{\l}awa {\L}ojasiewicza 6,
		30-348 Kraków,
		Poland}
	\urladdr{\url{https://www.fer.unizg.hr/en/kristijan.kilassa_kvaternik}}
	\email{kristijan.kilassakvaternik@fer.unizg.hr}
	\thanks{This work was supported in part by the Croatian Science Foundation grants MOBODL-2023-08-4960, IP-2022-10-9820 GLODS, UIP-2025-02-4309 ToGeCoP, and in part by the Horizon grant 101183111-DSYREKI-HORIZON-MSCA-2023-SE-01.}
		
	\date{\today}
	
	\subjclass[2020]{37B35, 37D10, 37B20, 37E30}
	\keywords{Lozi map, piecewise-affine dynamical systems, basin of attraction, omega-limit set, attractor, invariant manifolds}
	
	\begin{abstract}
		
		For the Lozi map $L_{a,b}$, we consider parameter pairs for which the fixed point $X$ in the first quadrant has no homoclinic points and the period-two orbit $\{P,P'\}$ is attracting. For such parameters, let $\ell$ denote the set of accumulation points of the unstable manifold $W_X^u$ that do not belong to $W_X^u$. We completely classify the forward asymptotic behavior of points in the phase space. The forward orbit of every point in the plane either converges to $X$, to the other fixed point $Y$ in the third quadrant, or to $\ell$, or it escapes to infinity. The global phase space is organized by the stable manifolds of the fixed points: $W_Y^s$ separates the basin of $\ell$ from the region of escaping orbits, while $W_X^s$ is the exceptional set of points whose orbits converge to $X$. In particular, if $\mathcal{A}_1$ denotes the component of $\mathbb{R}^2 \setminus W_Y^s$ containing $X$, then $\mathcal{A}_1 \setminus W_X^s$ is precisely the basin of attraction of $\ell$.
		
	\end{abstract}
	
	\maketitle	
	
	\baselineskip=18pt
	
	\section{Introduction}\label{sec:intro}
	
	The Lozi map family is one of the canonical models in planar dynamics. Defined as a two-parameter family of homeomorphisms
	\begin{equation*}
	L_{a,b}\colon \mathbb{R}^2 \rightarrow \mathbb{R}^2, \quad L_{a,b}(x,y) = (1 + y - a|x|,bx),
	\end{equation*}
its piecewise-affine nature makes it an appealing setting for studying a variety of dynamical phenomena, while also providing valuable insights into other families, such as border collision normal forms and the H\'{e}non family. Since its introduction in 1978 \cite{lozi1978attracteur}, numerous techniques have been employed to characterize its dynamics; nevertheless, many questions concerning its dynamical properties remain open to this day.

	In this paper, we completely characterize the forward asymptotic behavior of orbits in a parameter set that arose from the study of the zero entropy locus. In \cite{misiurewicz2024zero}, Misiurewicz and \v{S}timac studied parameters $(a,b) \in \mathfrak{R}$; a formal definition of $\mathfrak{R}$ is given at the beginning of Section \ref{sec:basin_attraction}. On this parameter set, there is an attracting period-two cycle $\{P,P'\}$ and there are no homoclinic points for the fixed point $X$ in the first quadrant; in other words, there are no points in the plane whose backward and forward orbits both tend to $X$, apart from $X$ itself (see Figure \ref{fig:Lozi_stable_unstable_Z_V}). On a distinguished subset of $\mathfrak{R}$, the authors showed that the branches of $W_X^u$, the unstable manifold of $X$, accumulate on the set $\ell = \{P,P'\}$.
	
	In a recent paper \cite{kilassa2026accumulation}, the author of this work extended previous results to the whole of $\mathfrak{R}$, where the accumulation set $\ell$ of $W_X^u$ can, in general, have a more intricate structure. By studying the non-wandering set of the second iterate of the Lozi map, the author showed that $\Omega(L_{a,b}^2) \subseteq \ell \cup \{X,Y\}$ for every pair $(a,b) \in \mathfrak{R}$, where $Y$ is the other fixed point of $L_{a,b}$ in the third quadrant. Moreover, the connected components of $\ell$ were characterized as intersections of nested images of a specially constructed polygon $\mathcal{D}$, which is an eventually trapping region for $\ell$. This construction is revisited in Subsection \ref{subsec:accum_set_ell}.
	
	In view of these findings, $\ell$ behaves as an attractor for $L_{a,b}$. The main objective of this work is therefore to investigate the dynamical role of $\ell$ as an attractor by providing a complete description of its basin of attraction. We show that the boundary of this basin is formed by $W_Y^s$, the stable manifold of $Y$, and we use the geometry of this invariant manifold to characterize the global dynamics in the surrounding regions of the plane.
	
	More precisely, a careful geometric analysis shows that $W_Y^s$ separates the plane (Corollary \ref{cor:WsY_separates}). The two connected components $\mathcal{A}_1$, $\mathcal{A}_2$ of the complement $\mathbb{R}^2 \setminus W_Y^s$ are both forward and backward invariant, and they capture all the qualitative behavior of the forward dynamics of $L_{a,b}$. First, every forward orbit in $\mathcal{A}_2$ escapes to infinity; see Figure \ref{figure:thm_A2_infinity} for a visual depiction of the escaping mechanism. Second, we show that a forward orbit can accumulate at a fixed point of $L_{a,b}$ only if that orbit belongs to the stable manifold of that fixed point. Since $W_X^s$ is contained in $\mathcal{A}_1$, we ultimately show in Theorem \ref{thm:A_1_basin_ell} that
	\begin{equation*}
	\mathop{\mathrm{Basin}}(\ell) = \mathcal{A}_1 \setminus W_X^s.
	\end{equation*}
In other words, all points in the remaining set $\mathcal{A}_1 \setminus W_X^{s}$ converge to $\ell$ under forward iteration of $L_{a,b}$.
	
	Thus, we completely classify the asymptotic behavior of every forward orbit in the phase space: each orbit either converges to one of the fixed points $X$ or $Y$, converges to the set $\ell$, or it escapes to infinity. In addition, we provide an explicit geometric decomposition of the phase space into regions corresponding to each of these asymptotic behaviors. 
	
	The interplay between the fixed points $X$ and $Y$ provides a useful way to interpret this global picture. The unstable manifold of $X$ is responsible for the complexity of the system, as it generates the accumulation set $\ell$, which captures the non-wandering dynamics that may induce positive topological entropy, as shown in \cite{kilassa2026accumulation}. On the other hand, the stable manifold of $Y$ organizes the phase space by forming the basin boundary and separating the escaping dynamics from the more intricate dynamics associated with $X$. Informally speaking, $X$ dictates the complexity of the dynamics, whereas $Y$ sets the limits of that complexity. 
	
	There is also a close analogy with the dynamics arising in the Misiurewicz parameter set, which corresponds to a different parameter regime from the one considered here. For the corresponding strange Lozi attractors, the basin of attraction was determined in \cite{baptista2009basin}, while the basin of the strange attractors for H\'{e}non maps was studied in \cite{cao2000nonwandering}. In both settings, the same interplay between stable and unstable invariant manifolds governs the basin geometry. In the Lozi case, the attractor is generated by the unstable manifold of the fixed point in the first quadrant, while the stable manifold of the other fixed point in the third quadrant forms the boundary of its basin of attraction; an analogous role is played by the corresponding invariant manifolds in the H\'{e}non setting. In the parameter regime considered here, however, the geometry of $W_Y^s$ is more intricate than in \cite{baptista2009basin}, requiring a more detailed technical analysis to determine the basin; see Remark \ref{rem:first_tangency_point}.
	
	Still, the internal geometry of $\ell$ remains unresolved. As noted above, the equality $\ell = \{P,P'\}$ holds on a subset of the parameter region $\mathfrak{R}$, making it plausible that this equality might hold throughout $\mathfrak{R}$. However, in a private communication, Y.\ Ishii and D.\ Sands brought to the author's attention examples of parameters suggesting that, in general, $\ell$ could have a more complex structure. It is also interesting to note that, even in such a case, $\ell$ would still not be comparable to the strange attractors arising from the Misiurewicz set; we refer the reader to Section \ref{sec:concluding_rems} for a more detailed discussion. Therefore, further study of the accumulation set $\ell$ could reveal a class of attractors distinct from the classical Lozi attractors, providing new insight into the dynamics of the Lozi family.
	
	This paper is organized as follows. In Section \ref{sec:prelim}, we provide an overview of preliminary notions and results, together with the notation used throughout the paper. Section \ref{sec:basin_attraction} comprises three parts. In Subsection \ref{subsec:accum_set_ell}, we review the construction of the eventually trapping region. We then analyze the geometry of the stable manifold $W_Y^s$ and show that it separates the plane in Subsection \ref{subsec:fixed_point_Y}. Subsection \ref{subsec:approaching_infinity} is dedicated to the analysis of the forward asymptotic behavior of orbits, culminating in the proof of the main theorem, which determines the basin of attraction of $\ell$. The concluding remarks are provided in Section \ref{sec:concluding_rems}.
	
	The author thanks S.\ \v{S}timac for reading the preliminary version of the text and providing constructive advice, as well as J.\ Boro\'{n}ski for helpful discussions on certain proofs. The author also thanks Y.\ Ishii and D.\ Sands for sharing their findings on the potential nontrivial structure of $\ell$.
	
	This paper is based on Chapter 4 of the author's PhD dissertation \cite{kilassa2022tangential}, where preliminary versions of these results first appeared.

	\section{Preliminaries}\label{sec:prelim}
	
	\subsection{Topological dynamics}\label{subsec:dynsys_general_notions}
	
	Let $\mathcal{X}$ be a metric space with a metric $d$ and $f \colon \mathcal{X} \rightarrow \mathcal{X}$ a continuous function. For $n \in \mathbb{N}$, we denote $f^n = f \circ f \circ \ldots \circ f$ ($n$ times). In particular, we let $f^0$ be the identity map. In addition, if $f$ is invertible, we also put $f^{-n} = f^{-1} \circ f^{-1} \circ \ldots \circ f^{-1}$ ($n$ times). We say that a set $\mathcal{Y} \subseteq \mathcal{X}$ is $f$-\emph{invariant} if $f(\mathcal{Y}) \subseteq \mathcal{Y}$.
	
	For a point $x \in \mathcal{X}$, the set $\mathcal{O}^{+}(x,f)=\{f^n(x) \colon n \in \mathbb{N}_0\}$ is called the \emph{forward orbit} of $x$. If $f$ is invertible, we define the \emph{backward orbit} $\mathcal{O}^{-}(x,f) = \{f^{-n}(x) \colon n \in \mathbb{N}_0\}$ and the \emph{full orbit} $\mathcal{O}(x,f) = \mathcal{O}^{+}(x,f) \cup \mathcal{O}^{-}(x,f)$.
	
	We say that a point $x \in \mathcal{X}$ is a \emph{non-wandering point} if for every neighborhood $\mathcal{U}$ of $x$, there exists a positive integer $n$ such that $f^{n}(\mathcal{U})\cap \mathcal{U} \neq \emptyset$. The \emph{non-wandering set} $\Omega(f)$ is the set of all non-wandering points. If $f$ is a homeomorphism, then $\Omega(f)$ is both $f$-invariant and $f^{-1}$-invariant, as well as $\Omega(f)=\Omega(f^{-1})$; see, e.g., Proposition 1.1 in \cite{shub1987global}.
	
	Furthermore, for a point $x \in \mathcal{X}$, the $\omega$-\emph{limit set} of $x$ is defined by
	\begin{equation*}
	\begin{split}
	\omega(x,f) & = \bigcap_{n \in \mathbb{N}} \Cl\{f^k(x) \colon k \geqslant n\} \\ 
				& = \{ y \in \mathcal{X} \colon \exists \text{ a sequence } (n_k)_{k \in \mathbb{N}}\subseteq\mathbb{N} \text{ such that } n_k \rightarrow \infty \text{ and } f^{n_k}(x) \xrightarrow{k \rightarrow \infty} y \},
	\end{split}
	\end{equation*}
	where $\Cl \mathcal{Y}$ is the topological closure of a set $\mathcal{Y} \subseteq \mathcal{X}$. We will use the standard fact that for every $x \in \mathcal{X}$, the set $\omega(x,f)$ is contained in $\Omega(f)$; see, e.g., Proposition 1.3 in \cite{shub1987global}.
	
	For a point $x \in \mathcal{X}$ and a non-empty subset $\mathcal{A} \subseteq \mathcal{X}$, the distance of $x$ to $\mathcal{A}$ is defined as $d(x,\mathcal{A})=\inf_{a \in \mathcal{A}} d(x,a)$. The \emph{basin of attraction} of $\mathcal{A}$ is the set of all $x \in \mathcal{X}$ such that $d(f^n(x),\mathcal{A}) \xrightarrow{n \rightarrow \infty} 0$.
	
	In addition, the \emph{Hausdorff distance} $d_H$ of two non-empty subsets $\mathcal{A},\mathcal{B}\subseteq\mathcal{X}$ is defined as
	\begin{equation*}
	\begin{split}
	d_H(\mathcal{A},\mathcal{B}) & = \max \left\{ \sup_{a \in \mathcal{A}} d(a,\mathcal{B}),\ \sup_{b \in \mathcal{B}} d(b, \mathcal{A}) \right\} \\
								 & = \inf \{ \varepsilon > 0 \colon \mathcal{B} \subseteq \mathcal{A}_{\varepsilon} \text{ and } \mathcal{A} \subseteq \mathcal{B}_{\varepsilon} \},
	\end{split}
	\end{equation*}
	where $\mathcal{A}_{\varepsilon} := \bigcup_{a \in \mathcal{A}}\{x \in \mathcal{X} \colon d(x,a) < \varepsilon\}$, and $\mathcal{B}_\varepsilon$ is defined analogously.
	
	On the collection of all non-empty compact subsets of $\mathcal{X}$, the Hausdorff distance $d_H$ is a metric.
	
	We conclude this subsection by recalling the notion of internal chain transitivity that will be invoked later, before the proof of our main theorem.
	
	A non-empty subset $\mathcal{A} \subseteq \mathcal{X}$ is \emph{internally chain transitive} if for any $a,b \in \mathcal{A}$ and any $\varepsilon > 0$, there exists $n \in \mathbb{N}$ and a finite sequence of points $x_1,x_2,\ldots,x_n \in \mathcal{A}$ such that
	\begin{equation*}
	x_1 = a, x_n = b, \quad \text{and} \quad d(f(x_i),x_{i + 1}) < \varepsilon,\ 1 \leqslant i \leqslant n - 1. 
	\end{equation*}
This sequence of points is called an $\varepsilon$-\emph{chain} from $a$ to $b$, and it is visualized in Figure \ref{figure:epsilon_chain}.

	\begin{figure}[!ht]
	\begin{center}
	\begin{tikzpicture}[auto, scale=1]
			\tikzstyle{nodec}=[draw,circle,fill=black,minimum size=2pt,
			inner sep=0pt, label distance=2mm]
			\tikzstyle{nodeh}=[draw,circle,fill=white,minimum size=4pt,
			inner sep=0pt]
			\tikzstyle{dot}=[circle,draw=none,fill=none,minimum size=0pt,inner sep=2pt, outer sep=-1pt]

			\def\eps{1.5}
			
			\coordinate (x1) at (-7.5,3);
			\coordinate (x2) at (-2.6,3.92);
			\coordinate (x3) at (0,1);
			\node (x4) at (3,3) {$\ldots$};
			\coordinate (x5) at (5.5,2);
			
			\node[nodec, label={[below]$x_1=a$}] at (x1) {};
			\node[nodec, label={[above]$x_2$}] at (x2) {};
			\node[nodec, label={[below]$x_3$}] at (x3) {};
			\node[nodec, label={[right]$x_n=b$}] at (x5) {};
			
			\foreach \i in {1,2,3,5}
				\draw[thick, dashed] (x\i) circle (\eps);
				
			\coordinate (fx1) at ($(x2) + (195:{0.5*\eps})$);
			\coordinate (fx2) at ($(x3) + (135:{0.65*\eps})$);
			\coordinate (fxnm1) at ($(x5) + (70:{0.33*\eps})$);
			
			\node[nodec, label={[below]$f(x_1)$}] at (fx1) {};
			\node[nodec, label={[below]$f(x_2)$}] at (fx2) {};
			\node[nodec, label={[left]$f(x_{n-1})$}] at (fxnm1) {};
			
			\draw[-{Stealth[scale=1]}, shorten <=3pt, shorten >=3pt, thick, bend left=30] (x1) to (fx1);
			\draw[-{Stealth[scale=1]}, shorten <=3pt, shorten >=3pt, thick, bend right=30] (x2) to (fx2);
			\draw[-{Stealth[scale=1]}, shorten <=3pt, thick, bend left=30] (x3) to (x4.west);
			\draw[-{Stealth[scale=1]}, shorten >=3pt, thick, bend left=60] (x4.east) to (fxnm1);
			
			\draw[dotted, thick] (x1) to node[dot, swap]{$\varepsilon$} ($(x1) + (130:\eps)$);
			\draw[dotted, thick] (x2) to node[dot, swap]{$\varepsilon$} ($(x2) + (20:\eps)$);
			\draw[dotted, thick] (x3) to node[dot]{$\varepsilon$} ($(x3) + (-10:\eps)$);
			\draw[dotted, thick] (x5) to node[dot, swap]{$\varepsilon$} ($(x5) + (235:\eps)$);

			\end{tikzpicture}
	\end{center}
	\caption{An $\varepsilon$-chain from $a$ to $b$.}
	\label{figure:epsilon_chain}
	\end{figure}

	In \cite{hirsch2001chain}, the authors show that the $\omega$-limit set of a point with a precompact forward orbit is internally chain transitive. Recall that a subset $\mathcal{Y} \subseteq \mathcal{X}$ is \emph{precompact} if $\Cl\mathcal{Y}$ is compact. In particular, a subset of the Euclidean plane $\mathbb{R}^2$ is precompact if and only if it is bounded.
	
	\begin{lem}[{\cite{hirsch2001chain}, Lemma 2.1}] \label{lem:omega_ICT}
	If $x \in \mathcal{X}$ is such that $\mathcal{O}^{+}(x,f)$ is precompact, then $\omega(x,f)$ is internally chain transitive.
	\end{lem}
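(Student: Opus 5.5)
We prove the statement directly, using only compactness of $\Cl \mathcal{O}^{+}(x,f)$ and continuity of $f$; write $\omega = \omega(x,f)$ and $K = \Cl \mathcal{O}^{+}(x,f)$.

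\textbf{Step 1: preliminary facts.} Since $K$ is compact and $\omega$ is an intersection of nested non-empty closed subsets of $K$, the set $\omega$ is non-empty and compact. By continuity of $f$, $f(\omega) \subseteq \omega$: if $f^{n_k}(x) \to y$, then $f^{n_k+1}(x) \to f(y)$. Next, $d(f^n(x),\omega) \to 0$. Otherwise some subsequence stays at distance at least $\delta>0$ from $\omega$. By compactness of $K$ it has a further convergent subsequence, and its limit lies in $\omega$, which is a contradiction.

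\textbf{Step 2: uniform continuity.} Fix $a,b \in \omega$ and $\varepsilon>0$. Since $f$ is uniformly continuous on the compact set $K$, choose $0<\delta<\varepsilon/3$ such that $u,v \in K$ and $d(u,v)<\delta$ imply $d(f(u),f(v))<\varepsilon/3$. By Step 1, choose $N$ such that $d(f^n(x),\omega)<\delta$ for all $n\ge N$. Since $a,b \in \omega$, choose $m \ge N$ and $m' > m$ with $d(f^m(x),a)<\delta$ and $d(f^{m'}(x),b)<\delta$.

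\textbf{Step 3: building the chain.} For $m<i<m'$, pick $z_i \in \omega$ with $d(f^i(x),z_i)<\delta$. Set $z_m=a$ and $z_{m'}=b$. We check that $d(f(z_i),z_{i+1})<\varepsilon$ for $m\le i<m'$:
\begin{equation*}
d(f(z_i),z_{i+1}) \le d(f(z_i),f^{i+1}(x)) + d(f^{i+1}(x),z_{i+1}) < \varepsilon/3 + \delta < \varepsilon.
\end{equation*}
The first term is below $\varepsilon/3$ because $z_i$ and $f^i(x)$ both lie in $K$ and are $\delta$-close. The second term is below $\delta$ by the choice of $z_{i+1}$. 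Relabelling $z_m,\dots,z_{m'}$ as $x_1,\dots,x_n$ gives an $\varepsilon$-chain in $\omega$ from $a$ to $b$.

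\textbf{Main obstacle.} The delicate point is that the chain must lie in $\omega$ itself, not merely along the orbit of $x$. This is handled by the projection $f^i(x)\mapsto z_i$, which requires the uniform convergence $d(f^n(x),\omega)\to 0$ from Step 1. That convergence uses precompactness in an essential way, which is exactly why the hypothesis is needed. The case $a=b$ is covered by the same construction, since we may take $m'>m$.
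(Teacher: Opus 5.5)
Your proof is correct: compactness of $K=\Cl\mathcal{O}^{+}(x,f)$ gives $d(f^n(x),\omega(x,f))\to 0$. Shadowing the orbit segment $f^m(x),\dots,f^{m'}(x)$ by points of $\omega(x,f)$, with uniform continuity of $f$ on $K$ controlling each jump, then produces an $\varepsilon$-chain that lies in $\omega(x,f)$ itself, as required. The paper does not prove this lemma but quotes it from \cite{hirsch2001chain}; your argument is the standard direct proof of that result and makes the statement self-contained (the invariance $f(\omega)\subseteq\omega$ from your Step 1 is never used and can be dropped).
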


	\subsection{Lozi map}\label{subsec:lozi_maps}
	
	We consider the orientation-reversing Lozi map
	\begin{equation*}
	L_{a,b} \colon \mathbb{R}^2 \rightarrow \mathbb{R}^2,\quad L_{a,b}(x,y) = (1 + y - a|x|,bx),
	\end{equation*}
for parameter values $(a,b)$ such that $0 < b < 1$ and $a + b > 1$. Here, we will review some notions and concepts already introduced in \cite{kilassa2026accumulation} and \cite{kilassa2026tangential}. For the reader's convenience, Figure \ref{fig:lozi_quadrants_images} summarizes the action of $L_{a,b}$ and the inverse $L_{a,b}^{-1}$ on the four quadrants of the coordinate system. This elementary observation will be used throughout the paper. 
	
	\begin{figure}[!ht]
	\begin{center}
	\begin{subfigure}{.4\textwidth}
	\begin{flushleft}
	\begin{tikzpicture}[auto, scale=.75]
			\tikzstyle{nodec}=[draw,circle,fill=black,minimum size=2pt,
			inner sep=0pt, label distance=2mm]
			\tikzstyle{nodeq}=[draw,circle,fill=black,minimum size=5pt,
			inner sep=0pt, label distance=2mm]
			\tikzstyle{nodeh}=[draw,circle,fill=white,minimum size=4pt,
			inner sep=0pt]
			\tikzstyle{dot}=[circle,draw=none,fill=none,minimum size=0pt,inner sep=2pt, outer sep=-1pt]
			
			\draw[->] (-5.25,0)--(5.25,0) node [below]{$x$};
			\draw[->] (0,-4.25)--(0,4.25) node [left]{$y$};
			\node[label={[xshift=-0.2cm, yshift=-0.7cm]$0$}] at (0,0) {};

			\coordinate (b1) at (5,0);
			\coordinate (b2) at (0,5);
			\coordinate (b3) at (-5,0);
			\coordinate (b4) at (0,-5);	
			
			\coordinate (a1) at (5,4);
			\coordinate (a2) at (5,-4);
			
			\coordinate (xend1) at (-5,4);
			\coordinate (xend2) at (-5,-4);
			\coordinate (LO) at (2.5,0);
			
			\draw [ultra thick, dashed] (xend2)--(LO)--(xend1) node[above right]{$x=1-\frac{a}{b}|y|$};
			
			\draw [ultra thick, dashed] (b3)--(b1);
			
			\node [fill=white] (L1) at (3,2) {$L_{a,b}(\mathcal{Q}_1)$};
			\node [fill=white] (L2) at (3,-2) {$L_{a,b}(\mathcal{Q}_2)$};
			\node [fill=white] (L3) at (-3,-1.5) {$L_{a,b}(\mathcal{Q}_3)$};
			\node [fill=white] (L4) at (-3,1.5) {$L_{a,b}(\mathcal{Q}_4)$};
			
			\begin{pgfonlayer}{bg}
			\fill [pattern=dots] (b1.center)--(LO.center)--(xend1.center)--(a1.center)--cycle;
			\fill [pattern=vertical lines] (b1.center)--(LO.center)--(xend2.center)--(a2.center)--cycle;
			\fill [pattern=horizontal lines] (xend1.center)--(LO.center)--(b3.center)--cycle;
			\fill [pattern=crosshatch] (b3.center)--(xend2.center)--(LO.center)--cycle;
			\end{pgfonlayer}
	\end{tikzpicture}
	\end{flushleft}
	\end{subfigure}\hspace{2cm}%
	\begin{subfigure}{.4\textwidth}
	\begin{tikzpicture}[auto, scale=.75]
			\tikzstyle{nodec}=[draw,circle,fill=black,minimum size=2pt,
			inner sep=0pt, label distance=2mm]
			\tikzstyle{nodeq}=[draw,circle,fill=black,minimum size=5pt,
			inner sep=0pt, label distance=2mm]
			\tikzstyle{nodeh}=[draw,circle,fill=white,minimum size=4pt,
			inner sep=0pt]
			\tikzstyle{dot}=[circle,draw=none,fill=none,minimum size=0pt,inner sep=2pt, outer sep=-1pt]
			
			\draw[->] (-5.25,0)--(5.25,0) node [below]{$x$};
			\draw[->] (0,-4.25)--(0,4.25) node [left]{$y$};
			\node[label={[xshift=-0.2cm, yshift=-0.7cm]$0$}] at (0,0) {};

			\coordinate (b1) at (5,0);
			\coordinate (b2) at (0,4);
			\coordinate (b3) at (-5,0);
			\coordinate (b4) at (0,-4);
			
			\coordinate (c1) at (5,-4);
			\coordinate (c2) at (-5,-4);
			
			\coordinate (yend1) at (5,4);
			\coordinate (yend2) at (-5,4);
			\coordinate (lm1O) at (0,-3);
			
			\draw [ultra thick, dashed] (yend2)--(lm1O)--(yend1) node[above left]{$y=a|x|-1$};
			
			\draw [ultra thick, dashed] (b4)--(b2);
			
			\node [fill=white] (Li1) at (2,3) {$L_{a,b}^{-1}(\mathcal{Q}_1)$};
			\node [fill=white] (Li2) at (3,-2) {$L_{a,b}^{-1}(\mathcal{Q}_2)$};
			\node [fill=white] (Li3) at (-3,-2) {$L_{a,b}^{-1}(\mathcal{Q}_3)$};
			\node [fill=white] (Li4) at (-2,3) {$L_{a,b}^{-1}(\mathcal{Q}_4)$};
			
			\begin{pgfonlayer}{bg}
			\fill [pattern=dots] (b2.center)--(yend1.center)--(lm1O.center)--cycle;
			\fill [pattern=vertical lines] (b4.center)--(lm1O.center)--(yend1.center)--(c1.center)--cycle;
			\fill [pattern=horizontal lines] (yend2.center)--(lm1O.center)--(b2.center)--cycle;
			\fill [pattern=crosshatch] (yend2.center)--(lm1O.center)--(b4.center)--(c2.center)--cycle;
			\end{pgfonlayer}
			\end{tikzpicture}
	\end{subfigure}
	\end{center}
	\caption{Images of the quadrants ($\mathcal{Q}_i$ is the $i$-th quadrant, $i=1,2,3,4$) under the Lozi map $L_{a,b}$ (left) and its inverse $L_{a,b}^{-1}$ (right), for positive values of parameters $a$ and $b$.}
	\label{fig:lozi_quadrants_images}
	\end{figure}	 

	For parameter pairs $(a,b)$ such that $0<b<1$ and $a+b>1$, the Lozi map $L_{a,b}$ has two hyperbolic saddle fixed points, $X=\bigl(\frac{1}{1+a-b},\:\frac{b}{1+a-b}\bigr)$ in the first and $Y=\bigl(\frac{1}{1-a-b},\:\frac{b}{1-a-b}\bigr)$ in the third quadrant. The eigenvalues of the differential of $L_{a,b}$ at $X$ are
	\begin{equation*}
	\lambda_X^u = \tfrac{1}{2}\left(-a-\sqrt{a^2+4b}\right),\quad \lambda_X^s = \tfrac{1}{2}\left(-a+\sqrt{a^2+4b}\right),
	\end{equation*}
while those at $Y$ are given by
	\begin{equation*}
	\lambda_Y^u = \tfrac{1}{2}\left(a+\sqrt{a^2+4b}\right),\quad \lambda_Y^s = \tfrac{1}{2}\left(a-\sqrt{a^2+4b}\right).
	\end{equation*}
Observe that $\lambda_X^u<-1$, $0<\lambda_X^s<1$ and $\lambda_Y^u>1$, $-1<\lambda_Y^s<0$. Moreover, for every eigenvalue $\lambda$, the corresponding eigenvector is given by $\binom{\lambda}{b}$.

	Furthermore, when $0<b<1$ and $1-b<a<1+b$, there are also two period-two points,
	\begin{equation*}
	P=\left(\frac{1+a-b}{a^2+(1-b)^2},\,\frac{b(1-a-b)}{a^2+(1-b)^2}\right),\ P'=\left(\frac{1-a-b}{a^2+(1-b)^2},\,\frac{b(1+a-b)}{a^2+(1-b)^2}\right).
	\end{equation*}
Points $P$ and $P'$ lie in the fourth and second quadrant, respectively, and they are attracting. 

	For every point $A \in \mathbb{R}^2$ and every $k \in \mathbb{Z} \setminus \{0\}$, we put $A^k=L_{a,b}^k(A)$; in particular, $A^0=A$.

	Recall that the \emph{unstable} and \emph{stable manifold} of $X$ are respectively
	\begin{equation*}
	W_X^u = \{A \in \mathbb{R}^2 \colon A^{-n} \overset{n \rightarrow \infty}{\longrightarrow} X\}, \quad W_X^s = \{A \in \mathbb{R}^2 \colon A^n \overset{n \rightarrow \infty}{\longrightarrow} X\}. 
	\end{equation*}
We know that both $W_X^u$ and $W_X^s$ are invariant under $L_{a,b}$ and $L_{a,b}^{-1}$, and contain $X$. The \emph{unstable} and \emph{stable manifold} of $Y$ are defined analogously: 
	\begin{equation*}
	W_Y^u = \{A \in \mathbb{R}^2 \colon A^{-n} \overset{n \rightarrow \infty}{\longrightarrow} Y\}, \quad W_Y^s = \{A \in \mathbb{R}^2 \colon A^n \overset{n \rightarrow \infty}{\longrightarrow} Y\}. 
	\end{equation*}
Since the Lozi map is piecewise affine rather than smooth, $W_X^u$, $W_X^s$, $W_Y^u$ and $W_Y^s$ are polygonal lines rather than smooth curves; see Figures \ref{fig:Lozi_stable_unstable_Z_V} and \ref{figure:stable_Y}. We nevertheless refer to them as stable and unstable manifolds, following \cite{misiurewicz1980strange}.
	
	Moreover, consider the points at which the stable manifolds $W_X^s$ and $W_Y^s$ break: these points are endpoints of the maximal line segments contained in these polygonal lines. We follow the terminology from \cite{boronski2023densely} and call them \emph{V-points}.

	\begin{figure}
	\begin{center}
		\includegraphics[width=\linewidth]{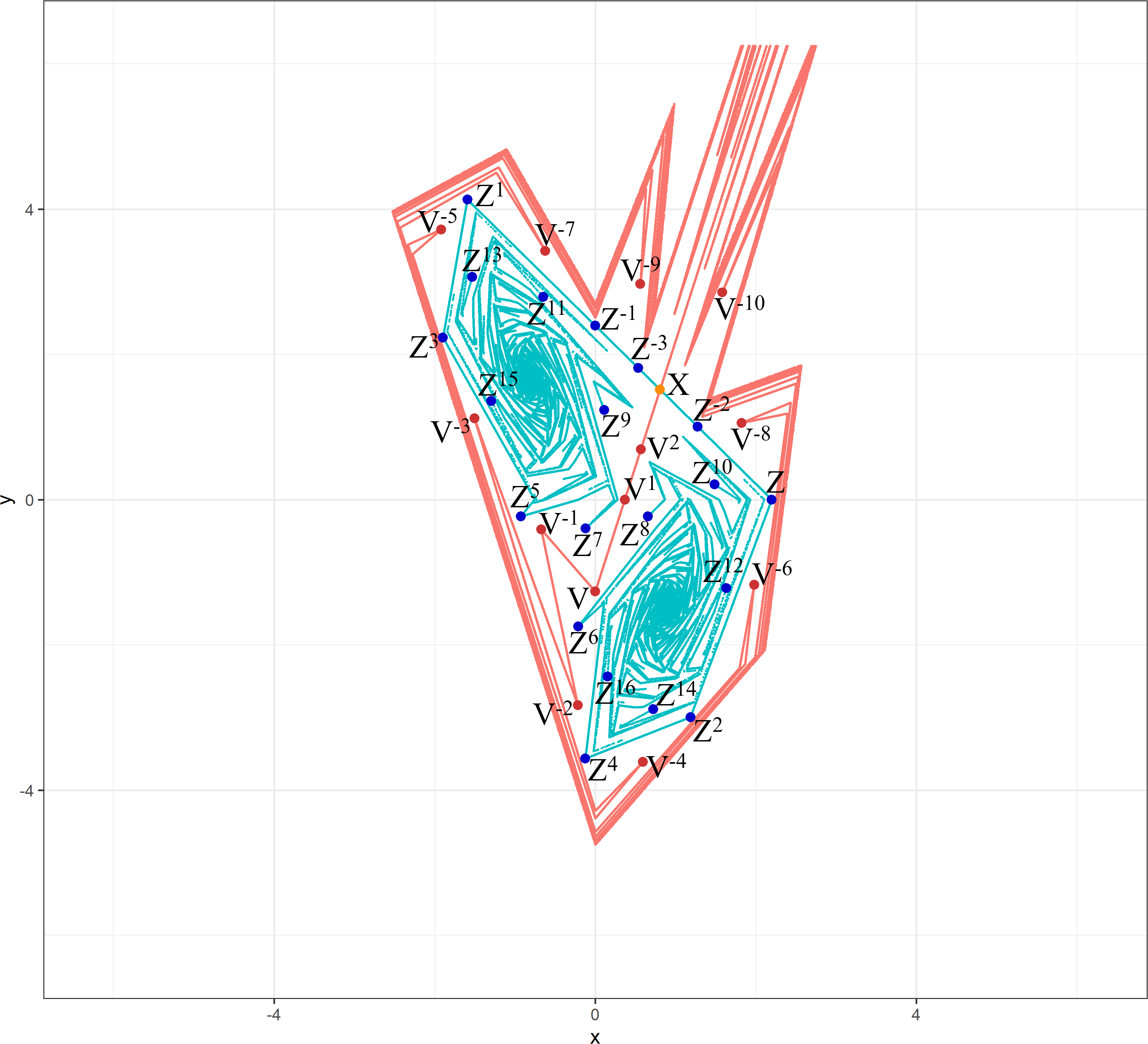}
	\end{center}
	\caption{The stable (red) and unstable (blue) manifold of $X$ for parameter values $a=1.18$, $b=0.94$, together with some iterates of $Z$ and $V$.}
	\label{fig:Lozi_stable_unstable_Z_V}
	\end{figure}
	
	Consider the unstable manifold $W_X^u$. We denote the half of that manifold that starts at $X$ and goes to the right by $W_X^{u+}$. That half intersects the positive $x$-axis for the first time at the point
	\begin{equation}
	Z=\left(\frac{2+a+\sqrt{a^2+4b}}{2(1+a-b)},\: 0\right)=\left(\frac{2}{2+a-\sqrt{a^2+4b}},\: 0\right).
	\label{eq:Z}
	\end{equation}
We denote by $W_X^{u-}$ the other half, starting at $X$ and going to the left. Note that
	\begin{equation*}
	W_X^{u+} = \{X\}\cup\bigcup_{n=-\infty}^{\infty}L_{a,b}^{2n}\bigl(\overline{ZZ^2}\bigr) ,\quad W_X^{u-} = \{X\}\cup\bigcup_{n=-\infty}^{\infty}L_{a,b}^{2n}\bigl(\overline{Z^{-1}Z^1}\bigr).
	\end{equation*}
Similarly, we denote by $W_X^{s-}$ the half of the stable manifold $W_X^s$ that starts at $X$ and extends downward. That branch intersects the negative $y$-axis for the first time at the point
	\begin{equation}
	V=\left(0,\: \frac{2b - a - \sqrt{a^2 + 4b}}{2 ( 1 + a - b)}\right)=\left(0,\: -\frac{2b}{-a+2b+\sqrt{a^2+4b}}\right).
	\label{eq:V}
	\end{equation}
We have
	\begin{equation*}
	W_X^{s-} = \{X\} \cup \bigcup_{n=-\infty}^{\infty}L_{a,b}^{-n}\bigl(\overline{VV^1}\bigr).
	\end{equation*}
The other half of $W_X^s$ is a half-line emanating from $X$ and going up in the first quadrant. We denote that branch by $W_X^{s+}$.

	Let us observe more closely the backward iterates of $V$. Since $V$ lies on the negative $y$-axis, its preimage $V^{-1}$ lies on the line $y=-ax-1$, in the second or third quadrant. More generally, the preimage of a point in the third quadrant lies either in the second or third quadrant. Thus, at first sight, it might seem possible that all backward iterates of $V$ remain in the third quadrant. However, it is shown in \cite{kilassa2026tangential} that this cannot occur.
	
	Let $s_0$ be the slope of the straight line segment $\overline{VV^1}$, $s_0=\frac{b}{\lambda^s_X}=\frac{1}{2}\left(a+\sqrt{a^2+4b}\right)$. For every $n \in \mathbb{N}$, let $s_n$ be the slope of the arc component of $L_{a,b}^{-n}\bigl(\overline{VV^1}\bigr)$ in the third quadrant (note that if $V^{-1}$, $V^{-2}$, \ldots, $V^{-n+1}$ all lie in the third quadrant, then $L_{a,b}^{-n}\bigl(\overline{VV^1}\bigr)$ is simply the line segment $\overline{V^{-n}V^{-n+1}}$). Since $L_{a,b}^{-1}$ acts as an affine map on points in the lower half-plane, one can obtain a recurrence for the sequence of slopes $(s_n)_{n \in \mathbb{N}_0}$. We will use the following result, which summarizes the behavior of these slopes.
	
	\begin{lem}[{\cite{kilassa2026tangential}, Lemma 3.2}]\label{lem:lozi_recurr_coef}
	The sequence $(s_n)_{n\in\mathbb{N}_0}$ given by the recurrence
	\begin{equation}\label{eg:recurr}
	s_{n+1}=b\cdot\frac{1}{s_n}-a,\quad n\geqslant0;\quad s_0=\tfrac{1}{2}\left(a+\sqrt{a^2+4b}\right),
	\end{equation} 
	has the following properties:
	\begin{enumerate}
	\item if $s_{n_1}>0$ for some $n_1\in\mathbb{N}_0$, then $s_n>0$ for all $n\leqslant n_1$,
	\item if $s_{n_2}<0$ for some $n_2\in\mathbb{N}_0$, then $s_n<0$ for all $n\geqslant n_2$,
	\item for all $n\in\mathbb{N}_0$, if $s_{2n+1}>0$, then $s_{2n+2}>0$,
	\item $(s_n)$ converges and $\displaystyle\lim_{n\rightarrow\infty}s_n=\tfrac{1}{2}\left(-a-\sqrt{a^2+4b}\right)$.  
	\end{enumerate}
	\end{lem}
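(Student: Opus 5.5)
The plan is to linearize the recurrence by a Möbius change of coordinates and read off all four properties from one explicit formula. Write $g(s)=\frac{b}{s}-a$, so that $s_{n+1}=g(s_n)$. The fixed points of $g$ are the roots of $s^2+as-b=0$, namely
\begin{equation*}
\sigma_+=\tfrac12\left(-a+\sqrt{a^2+4b}\right)=\lambda_X^s\in(0,1),\qquad \sigma_-=\tfrac12\left(-a-\sqrt{a^2+4b}\right)<0 .
\end{equation*}
Since $\sigma_+\sigma_-=-b$, we have $s_0=-\sigma_->\sigma_+$.

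Each fixed point satisfies $\sigma_\pm=\frac{b}{\sigma_\pm}-a$. Hence
\begin{equation*}
g(s)-\sigma_\pm=\frac{b(\sigma_\pm-s)}{s\,\sigma_\pm}.
\end{equation*}
Put $h(s)=\frac{s-\sigma_-}{s-\sigma_+}$. Taking the quotient of the two identities above gives
\begin{equation*}
h(g(s))=\kappa\,h(s),\qquad \kappa=\frac{\sigma_+}{\sigma_-}\in(-1,0),
\end{equation*}
where $|\kappa|<1$ because $|\sigma_+|<|\sigma_-|$. Therefore $h(s_n)=\kappa^nH$ for every $n$, with
\begin{equation*}
H=h(s_0)=\frac{2\sigma_-}{\sigma_-+\sigma_+}>1 .
\end{equation*}
Here $H>1$ because $\sigma_-<0$ and $\sigma_-+\sigma_+=-a<0$, so $H=2\sigma_-/(-a)=2|\sigma_-|/a$, and $2|\sigma_-|=a+\sqrt{a^2+4b}>a$. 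Property (4) follows at once: $h(s_n)\to 0$, and $h^{-1}(0)=\sigma_-$, so $s_n\to\sigma_-=\tfrac12\left(-a-\sqrt{a^2+4b}\right)$. We tacitly assume the sequence is defined, i.e.\ $s_n\neq 0$; this is automatic from the geometric origin of the slopes, and otherwise (1)--(3) should be read up to the first undefined index.

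Next I would describe where $h$ sends positive slopes. We can write $h(s)=1+\frac{\sigma_+-\sigma_-}{s-\sigma_+}$, so $h$ is decreasing on each side of the pole $\sigma_+$. It maps $(0,\sigma_+)$ onto $(-\infty,1/\kappa)$, because $h(0)=\sigma_-/\sigma_+=1/\kappa<-1$. It maps $(\sigma_+,\infty)$ onto $(1,\infty)$. Thus
\begin{equation*}
s>0 \iff h(s)>1 \ \text{ or } \ h(s)<1/\kappa .
\end{equation*}
The sign of $h(s_n)=\kappa^nH$ alternates, so we split by parity of $n$.
\begin{enumerate}
\item For even $n$, $h(s_n)>0$, hence $s_n>0 \iff \kappa^{n}H>1$.
\item For odd $n$, $h(s_n)<0$, hence $s_n>0 \iff \kappa^nH<1/\kappa \iff \kappa^{n+1}H>1$, since multiplying by $\kappa<0$ reverses the inequality.
\end{enumerate}
In both cases, with $m(n)$ the smallest even integer $\geqslant n$,
\begin{equation*}
s_n>0 \iff |\kappa|^{m(n)}H>1 .
\end{equation*}

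The criterion depends only on $m(n)$, so property (3) holds, in fact as an equivalence: $m(2n+1)=m(2n+2)$. Because $|\kappa|<1$, the quantity $|\kappa|^{m(n)}H$ is non-increasing in $n$. So positivity of $s_{n_1}$ forces positivity of every $s_n$ with $n\leqslant n_1$, which is property (1).

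For property (2) I would use the direct argument rather than the criterion: if $s_n<0$, then $s_{n+1}=\frac{b}{s_n}-a<-a<0$, and induction finishes. The only point needing care is the bookkeeping of sign conventions for $\kappa$ and the two branches of $h$, so I expect no genuine obstacle.
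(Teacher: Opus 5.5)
The paper does not prove this lemma; it is quoted from \cite{kilassa2026tangential}. There is therefore no in-paper argument to compare with, and your proposal has to stand on its own. It does. The following steps are all correct:
the conjugacy $h\circ g=\kappa h$ with $h(s)=\frac{s-\sigma_-}{s-\sigma_+}$ and $\kappa=\sigma_+/\sigma_-\in(-1,0)$;
the value $H=h(s_0)=(a+\sqrt{a^2+4b})/a>1$;
the identification of the $h$-preimages of $(1,\infty)$ and $(-\infty,1/\kappa)$ as $(\sigma_+,\infty)$ and $(0,\sigma_+)$;
and the criterion $s_n>0\iff|\kappa|^{m(n)}H>1$.
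Items (1)--(4) follow as you say. For (1) the criterion is not even needed, since (1) is the contrapositive of (2) once zeros are excluded.

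What the M\"obius linearization buys is the exact closed form $h(s_n)=\kappa^nH$. It upgrades (3) to an equivalence. It shows that the first non-positive slope occurs at the smallest odd $n$ with $|\kappa|^{n+1}H\leqslant 1$. The same device applied to $s\mapsto b/s+a$ proves Lemma \ref{lem:slope_cvg_upper}(2) at once: the fixed points are $\frac12(a\pm\sqrt{a^2+4b})$, the multiplier has modulus $\frac{\sqrt{a^2+4b}-a}{\sqrt{a^2+4b}+a}<1$, and the excluded initial value is exactly the repelling fixed point.

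One remark of yours is inaccurate: $s_n\neq 0$ is not ``automatic from the geometric origin of the slopes''. Since $b/s_0=\sigma_+$, one has $s_1=\sigma_+-a$, which vanishes when $b=2a^2$. This is compatible with $0<b<1$, $a+b>1$ for $a\in(\tfrac12,\tfrac{1}{\sqrt2})$. In that case $\overline{V^{-1}V}$ is horizontal and its preimage $\overline{V^{-2}V^{-1}}$ is vertical.

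This is a defect of the statement read as a real recurrence, not of your method. The maps $g$ and $h$ are M\"obius, so $h\circ g=\kappa h$ holds on $\mathbb{R}\cup\{\infty\}$, with $0\mapsto\infty\mapsto -a$. Your criterion shows that $s_n=0$ can occur only for odd $n$ with $|\kappa|^{n+1}H=1$. Items (1)--(4) remain true verbatim when the slopes are read in $\mathbb{R}\cup\{\infty\}$.
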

	
	As it turns out, the limit of $(s_n)$ is the slope of the arc component of $W_Y^s$ in the third quadrant that passes through the fixed point $Y$. This result will be revisited in Section \ref{sec:basin_attraction}. In addition, Lemma \ref{lem:lozi_recurr_coef} is the key ingredient for the description of the shape formed by the stable manifold $W_X^s$ in the third quadrant.
	
	For points $A_1,A_2 \in W_X^s$, let $[A_1,A_2]^s\subset W_X^s$ denote the polygonal line lying on $W_X^s$ with $A_1$ and $A_2$ as endpoints. 
	
	\begin{lem}[Zigzag structure of $W_X^{s}$ in the third quadrant; {\cite{kilassa2026tangential}, Lemma 3.3}]\label{lem:zigzag}
	There exists a positive integer $n$ such that $V^{-n}$ lies in the second quadrant. The smallest such positive integer $n_0$ is odd, $[V,V^{-n_0+1}]^{s}$ is contained in the third quadrant and all V-points of $W_X^{s}$ on it are negative iterates of $V$.
	\end{lem}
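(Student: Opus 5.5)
The approach is to work entirely with the inverse map restricted to the lower half-plane, where it is affine:
\[
L_{a,b}^{-1}(x,y)=\Bigl(\tfrac{y}{b},\; x-1-\tfrac{a}{b}\,y\Bigr),\qquad y\leqslant 0 .
\]
This affine map $T$ has $Y$ as its unique fixed point. The eigenvalues of $T$ are $1/\lambda_Y^u\in(0,1)$ and $1/\lambda_Y^s<-1$, with eigenvectors $\binom{\lambda_Y^u}{b}$ and $\binom{\lambda_Y^s}{b}$ respectively. As observed before Lemma \ref{lem:lozi_recurr_coef}, $T$ sends the third quadrant into $\{x<0\}$. Hence as long as $V^{-1},\dots,V^{-n+1}$ stay in the third quadrant, two things hold. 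First, $V^{-n}=T^{n}(V)$. Second, $L_{a,b}^{-n}\bigl(\overline{VV^1}\bigr)$ is a genuine segment $\overline{V^{-n}V^{-n+1}}$ (the segment $\overline{VV^1}$ lies in the closed lower half-plane), and its slope is $s_n$ from \eqref{eg:recurr}.

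\emph{Existence of $n$.} Suppose, for contradiction, that $V^{-n}$ lies in the (closed) third quadrant for all $n\geqslant 1$. Decompose $V-Y=\alpha\binom{\lambda_Y^u}{b}+\beta\binom{\lambda_Y^s}{b}$. Then
\[
V^{-n}-Y=\alpha(\lambda_Y^u)^{-n}\tbinom{\lambda_Y^u}{b}+\beta(\lambda_Y^s)^{-n}\tbinom{\lambda_Y^s}{b}.
\]
Consider first $\beta\neq 0$. The second term grows geometrically with alternating sign, so the $y$-coordinates of $V^{-n}$ eventually alternate in sign. This contradicts $V^{-n}$ staying in the lower half-plane.

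Consider next $\beta=0$. Then $V$ lies on the line through $Y$ with slope $b/\lambda_Y^u>0$, and every segment $\overline{V^{-n}V^{-n+1}}$ lies on that same line. Thus $s_n=b/\lambda_Y^u$ for all $n$, which contradicts Lemma \ref{lem:lozi_recurr_coef}(4), since that limit is negative. (One can alternatively check directly that $V$ is not on this line by comparing $s_0$ with $b/\lambda_Y^u$.)

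Consequently some $V^{-n}$ has $y>0$. Because the map $T$ keeps $x<0$, the first such point $V^{-n_0}$ lies in the second quadrant.

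\emph{Structure of $[V,V^{-n_0+1}]^s$.} By minimality of $n_0$, the points $V,V^{-1},\dots,V^{-n_0+1}$ all lie in the third quadrant. The arc $[V,V^{-n_0+1}]^s$ equals the union of the segments $\overline{V^{-k}V^{-k+1}}$ for $1\leqslant k\leqslant n_0-1$, and each such segment is the image of $\overline{VV^1}$ under the affine map $T^{k}$. So it is a straight segment with endpoints in the convex third quadrant, and therefore lies in that quadrant. Since all the maps involved are affine there, the only possible breaks of $W_X^s$ along this arc occur at the junctions $V^{-k}$. Hence every V-point on the arc is a negative iterate of $V$.

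\emph{Oddness of $n_0$.} This is the step I expect to be the main obstacle. I would relate the exit from the third quadrant to the sign pattern of the slopes $s_n$. The point $V^{-n_0}$ has positive $y$-coordinate $x(V^{-n_0+1})-1-\tfrac ab\,y(V^{-n_0+1})$. This quantity is controlled by the direction in which the polyline moves when it is last in the third quadrant, that is, by the slope $s_{n_0-1}$ of $\overline{V^{-n_0+1}V^{-n_0+2}}$ together with the previous slope.

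The plan is to show two things:
\begin{enumerate}
\item as long as the slopes are positive, the zigzag moves toward the negative $y$-axis/second quadrant boundary only on alternate steps;
\item once a slope is negative, the exit happens immediately.
\end{enumerate}
Lemma \ref{lem:lozi_recurr_coef}(1),(2) give a single sign change of $(s_n)$, and (3) forbids that sign change at an odd-to-even index, i.e.\ the first negative slope has odd index. Combined with the alternating $(\lambda_Y^s)^{-n}$ component above, which flips side with respect to the eigenline of $Y$ each step, this should force the first exit to occur at an odd $n_0$.

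Making the correspondence between ``first negative slope'' and ``first exit'' precise is the delicate geometric check. I would do it by tracking on which side of the line through $Y$ in direction $\binom{\lambda_Y^u}{b}$ the points $V^{-n}$ lie.
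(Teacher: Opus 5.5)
Your existence argument and your description of $[V,V^{-n_0+1}]^s$ are sound. The oddness of $n_0$, however, is not proved, and the mechanism you propose for it is false.

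Your claim (2), that the exit from $\mathcal{Q}_3$ happens as soon as a slope becomes negative, already fails for the parameters of Figure \ref{fig:Lozi_stable_unstable_Z_V}, $a=1.18$, $b=0.94$. There $s_1=b/s_0-a\approx-0.635<0$. Yet $V^{-1}\approx(-0.673,-0.205)$ and $V^{-2}\approx(-0.218,-1.416)$ still lie in $\mathcal{Q}_3$, and only $V^{-3}\approx(-1.506,0.559)$ lies in $\mathcal{Q}_2$. So $n_0=3$, while the first negative slope has index $1$. More fundamentally, by Lemma \ref{lem:lozi_recurr_coef}(2) every slope after the first negative one is negative. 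The sign pattern of $(s_n)$ therefore carries no information about the parity of $n_0$ once $n_0$ exceeds that index, and no refinement of the slope bookkeeping can close this step.

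The side-of-line idea in your last sentence is the right tool, but it only works once you supply two sign facts that your proposal does not contain. Let $g(x,y)=y-U^{-1}_y-\frac{b}{\lambda_Y^u}\,x$. This affine function vanishes exactly on the line $YU^{-1}$ through $Y$ in direction $\binom{\lambda_Y^u}{b}$. Your eigen-decomposition then gives $g(V^{-n})=(\lambda_Y^s)^{-n}g(V)$ for all $n\leqslant n_0$. This is legitimate because $V^{-n_0}$ is still obtained from $V$ by $n_0$ applications of the affine branch of $L_{a,b}^{-1}$.

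\begin{enumerate}
\item $g>0$ on all of $\mathcal{Q}_2$, since there $y>0$, $-U^{-1}_y>0$ and $-\frac{b}{\lambda_Y^u}x>0$.
\item $g(V)=V_y-U^{-1}_y<0$. Write $r=\sqrt{a^2+4b}$ and clear the positive denominators $1+a-b$ and $a+b-1$. The inequality $V_y<U^{-1}_y$ becomes $2a(r-1-b)>0$, i.e.\ $a^2>(1-b)^2$, which holds because $a+b>1$.
\end{enumerate}

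Then $g(V^{-n_0})>0>g(V)$ forces $(\lambda_Y^s)^{-n_0}<0$, so $n_0$ is odd. The paper itself only quotes this lemma from \cite{kilassa2026tangential}, so there is no in-text proof to compare routes against. With this step filled in, your argument is complete.
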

	
	In other words, only finitely many consecutive backward iterates of $V$ lie in the third quadrant, and $W_X^s$ forms a zigzag polygonal line breaking at those iterates. One iterate $V^{-n_0}$ eventually lands in the second quadrant and all further backward iterates of the straight line segment $\overline{V^{-n_0}V^{-n_0+1}}$ will intersect both coordinate axes in the third quadrant.
	
	The polygonal line $[V,V^{-n_0}]^s$ is called the \emph{zigzag part} of $W_X^s$. For example, in Figure \ref{fig:Lozi_stable_unstable_Z_V}, one can see that $n_0=3$, that is, the zigzag part of $W_X^s$ is $[V,V^{-3}]^s$.

	\subsection{Notation}\label{subsec:notation}
	
	Since we are working in the Euclidean plane, we will use the following notation to denote geometrical objects and their topological characteristics.
	\begin{itemize}
	\item The first, second, third and fourth quadrant of the Cartesian coordinate system are denoted by $\mathcal{Q}_1$, $\mathcal{Q}_2$, $\mathcal{Q}_3$ and $\mathcal{Q}_4$, respectively. The origin is denoted by $O$.
	\item The positive and negative $x$-axis ($y$-axis) are denoted by $Ox^{+}$ and $Ox^{-}$ ($Oy^{+}$ and $Oy^{-}$), respectively. The $x$-axis and $y$-axis are denoted by $Ox$ and $Oy$, respectively.
	\item We use capital Latin letters such as $A,B,C,\ldots$ to mark points in the plane whenever no other notation is specified. The notation $L_{a,b}$ is reserved for the Lozi map. 
	\item In particular, $X$ and $Y$ denote the fixed points of $L_{a,b}$. Moreover, $Z$ and $V$ denote points on $W_X^u$ and $W_X^s$, respectively, as defined in Subsection \ref{subsec:lozi_maps}; see Figure \ref{fig:Lozi_stable_unstable_Z_V}.
	\item For a point $A\in\mathbb{R}^2$, $A_x$ and $A_y$ are the $x$-coordinate and $y$-coordinate of $A$, respectively.
	\item For $A,B\in\mathbb{R}^2$, the straight line segment with endpoints $A$ and $B$ is denoted by $\overline{AB}$. The straight line through $A$ and $B$ is denoted by $AB$.
	\item For $A\in\mathbb{R}^2$ and $\varepsilon>0$, $B_{\varepsilon}(A)$ represents the open ball in the plane centered at $A$ of radius $\varepsilon$.
	\item Lowercase Greek letters $\alpha,\beta,\gamma,$ etc.\ will stand for straight line segments or polygonal lines.
	\item Calligraphic letters $\mathcal{A},\mathcal{B},$ etc.\ will denote two-dimensional subsets of the plane, typically polygons.
	\item Let $\dist$ denote the Euclidean metric on $\mathbb{R}^2$.
	\item For $\mathcal{A}\subset\mathbb{R}^2$, we will use the following notation:
		\begin{itemize}
			\item $\Int\mathcal{A}$ is the interior of $\mathcal{A}$,
			\item $\Cl\mathcal{A}$ is the closure of $\mathcal{A}$,
			\item $\partial\mathcal{A}$ is the boundary of $\mathcal{A}$,
			\item $\Conv\mathcal{A}$ is the convex hull of $\mathcal{A}.$
		\end{itemize}
	\end{itemize}			
	
	Finally, we will use specific notation concerning the Lozi map and the stable and unstable manifolds $W_X^s$ and $W_X^u$.
	\begin{itemize}
		\item For every $k\in\mathbb{Z}$ and every point $A \in \mathbb{R}^2$, we put $A^k=L_{a,b}^{k}(A)$. In particular, $A^{0} = A$.
		\item For points $A,B\in W_X^u$, we put:
			\begin{itemize}
				\item $[A,B]^{u}\subset W_X^u$ is the polygonal line lying on $W_X^{u}$ with $A$ and $B$ as endpoints,
				\item in particular, if $[A,B]^{u}$ is a straight line segment, we denote it by $\overline{AB}^{u}$,
				\item $[A,B)^{u}:=[A,B]^{u}\setminus\{B\}$,
				\item $(A,B]^{u}:=[A,B]^{u}\setminus\{A\}$,
				\item $(A,B)^{u}:=[A,B]^{u}\setminus\{A,B\}$.
			\end{itemize}
		\item For $A,B\in W_X^s$, we define the sets $[A,B]^{s}$, $\overline{AB}^{s}$, $[A,B)^{s}$, $(A,B]^{s}$ and $(A,B)^{s}$ analogously.
		\item Additionally, $\ell$ will represent the set of accumulation points of $W_X^u$ that do not lie on $W_X^u$, that is, $\ell = \Cl W_X^u \setminus W_X^u$.			 
	\end{itemize}

	\section{Basin of attraction} \label{sec:basin_attraction}
	
	Let $\mathfrak{R}$ denote the set of parameter pairs $(a,b)$ such that $0 < b < 1$, $a + b > 1$, $W_X^u \cap W_X^s = \{X\}$ (there are no homoclinic points for the fixed point $X$), and the period-two orbit $\{P,P'\}$ is attracting. Within this parameter set, we will completely describe the basin of attraction of the accumulation set $\ell$ of the unstable manifold of $X$.
	
	In \cite{misiurewicz2024zero}, the authors studied the parameters in a subregion $\mathcal{R}$ satisfying the additional condition that $W_X^u$ intersects the coordinate axes only at $Z$ and $Z^{-1}$. They showed that $\ell = \{P,P'\}$ whenever $(a,b) \in \mathcal{R}$. The same conclusion holds when $(a,b) \in \mathfrak{R} \setminus \mathcal{R}$, and $W_X^u$ intersects the coordinate axes at finitely many points; see Theorem 3.1 in \cite{kilassa2026accumulation}.
	
	The case in which $W_X^u$ has infinitely many intersections with $Ox$ and $Oy$ is the most intricate, as it gives rise to a more complex accumulation set $\ell$. It is shown in \cite{kilassa2026accumulation} that in this case, the connected components of $\ell$ can be obtained geometrically as intersections of forward images of a specially constructed polygon $\mathcal{D}$ under $L_{a,b}^2$. We briefly review this construction in the following subsection. We then analyze the stable manifold of the fixed point $Y$, which plays a central role in determining the geometry of the region containing the basin of attraction of $\ell$.
	
	\subsection{Accumulation set $\ell$ and trapping region $\mathcal{D}$} \label{subsec:accum_set_ell}    
	
	We now consider parameter pairs $(a,b) \in \mathfrak{R}$ for which $W_X^u$ intersects $Ox$ and $Oy$ at infinitely many points. In \cite{kilassa2026accumulation}, the following lemma was proved.
	
	\begin{lem}[{\cite{kilassa2026accumulation}, Lemma 3.2}] \label{lem:point_S}
	Let $(a,b) \in \mathfrak{R}$ and assume that $W_X^u$ has infinitely many intersections with $Ox$ and $Oy$. Then there exists a point $S \in W_X^u \cap Ox$, $S \neq Z$, such that $\overline{SZ} \cap W_X^u = \{S,Z\}$.
	\end{lem}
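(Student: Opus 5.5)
We need to prove that there is a point $S \in W_X^u \cap Ox$, $S\neq Z$, with $\overline{SZ}\cap W_X^u=\{S,Z\}$. The idea is to pick $S$ as the point of $W_X^u \cap Ox$ closest to $Z$ on one side. We do not know a priori that $W_X^u\cap Ox$ is discrete, and this is where the work lies.

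First, we locate the intersections with $Ox$. By hypothesis $W_X^u\cap Ox$ is infinite: if only the $Oy$-intersections were infinite, their forward images lie on $Ox$ (since $L_{a,b}(Oy)\subset Ox$), so $Ox$ is met infinitely often in any case. We also use that $W_X^u$ is invariant and that the segment $\overline{ZZ^2}$ generates $W_X^{u+}$.

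Second, we show that the intersection points do not accumulate at $Z$ along $Ox$. Near $Z$, the arc of $W_X^u$ through $Z$ is a straight segment crossing $Ox$ transversally. Any other piece of $W_X^u$ coming close to $Z$ would have to approach it, and so, in the limit, would contain accumulation points. We argue with the structure of $\ell=\Cl W_X^u\setminus W_X^u$ and $\Omega(L^2_{a,b})\subseteq\ell\cup\{X,Y\}$. Suppose points of $W_X^u\cap Ox$ accumulate at $Z$ from the other side. Each piece of $W_X^u$ is a polygonal arc, and pieces far out along the manifold (in intrinsic length) would accumulate on $Z\in W_X^u$. We exclude this using the absence of homoclinic points. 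Near $X$ there is a local product structure, with $W_X^s$ crossing a neighborhood. Pulling back along $L^{-2n}_{a,b}$, the nearby pieces accumulating on $\overline{ZZ^2}$ would accumulate on the local unstable segment at $X$. Long arcs of $W_X^u$ that come close to the local unstable manifold are stretched, and by the $\lambda$-lemma type picture for piecewise-affine saddles they would cross the local $W_X^s$. That crossing gives a homoclinic point, contradicting $W_X^u\cap W_X^s=\{X\}$. The main obstacle is handling arcs that are not straight near $X$; shrinking the neighborhood avoids the break lines $Oy$ so that the map is affine there. We choose the neighborhood small enough that it lies in $\Int\mathcal{Q}_1$, where $L_{a,b}$ is affine. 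As a result, $Z$ is isolated in $W_X^u\cap Ox$, and also isolated from other pieces of $W_X^u$ within a neighborhood on $Ox$.

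Third, we select $S$. The intersection set $W_X^u\cap Ox$ is infinite, and we claim it is closed within a bounded region modulo accumulation at $\ell$. Take a point $T\in W_X^u\cap Ox$, $T\ne Z$, which exists by infinitude, and consider the compact segment $\overline{TZ}$. The set $\overline{TZ}\cap W_X^u$ is then nonempty besides $Z$. Let $S$ be the point of this set closest to $Z$. This infimum is attained, and the attained point is not $Z$, by the isolation from the second step. It remains to check that the closest point belongs to $W_X^u$ rather than $\ell$. Points of $\ell$ on $Ox$ arbitrarily close to $Z$ are already excluded. If the limit point $S_\ast$ lies in $\ell$, then near $S_\ast$ the manifold has pieces accumulating, and we replace $T$ by one of those pieces' intersections closer to $Z$. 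Iterating this replacement is a monotone process, and its limit again cannot sit in $\ell$. It cannot do so because the pieces near the limit would come close to $Z$'s side only via segments crossing $\overline{S_\ast Z}$, which yields a closer point. Hence the closest point $S$ lies on $W_X^u$, and $\overline{SZ}\cap W_X^u=\{S,Z\}$ by minimality.
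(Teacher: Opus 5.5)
You correctly identify the difficulty, namely that $W_X^u\cap Ox$ need not be discrete. Neither your second nor your third step resolves it, so there is a genuine gap.

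\textbf{Third step.} The set $\overline{TZ}\cap W_X^u$ need not be closed, because $W_X^u$ is not closed. A priori, the crossings of $W_X^u$ with $Ox$ on each side of $Z$ could converge, from the side away from $Z$, to a point $S_\ast\in\ell\cap Ox$. Then no nearest crossing exists. Excluding this is the whole content of the lemma, and your repair does not do it. Since $S_\ast$ realizes the infimum, there is no crossing strictly between $S_\ast$ and $Z$. So there is nothing ``closer to $Z$'' to replace $T$ by, and the ``monotone process'' cannot start. Your claim that pieces accumulating at $S_\ast$ must cross $\overline{S_\ast Z}$ is only asserted. Arcs of $W_X^u$ crossing $Ox$ at points $R_k\to S_\ast$ from the far side, and converging to an arc of $\ell$ through $S_\ast$, are compatible with everything in your first two steps.

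\textbf{Second step.} This step is not established either. Since $W_X^s$ is invariant, a piece of $W_X^u$ meets $W_X^s$ if and only if its preimage does. Pulling back by $L_{a,b}^{-2n}$ therefore cannot create a homoclinic point. Near $X$, the map $L_{a,b}^{-1}$ contracts the unstable direction, so the pulled-back pieces become short arcs near $Z^{-2n}$, not long arcs spanning a neighborhood of $X$. Their unstable extent shrinks at the same rate as their distance to the local stable segment. The $\lambda$-lemma says that arcs crossing $W_X^s$ accumulate on $W_X^u$ under forward iteration. It does not say that arcs accumulating on the local unstable segment must cross $W_X^s$.

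\textbf{Circularity.} The fact $X\notin\ell$ (Lemma~\ref{lem:X_Y_isolated_from_ell}) is proved via the polygon $\mathcal{D}$, whose definition requires $S$. Theorem~\ref{thm:L2_non_wandering} is obtained in \cite{kilassa2026accumulation} only after this lemma. So your appeal to the structure of $\ell$ and to $\Omega(L_{a,b}^2)$ risks circularity.

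\textbf{What is missing.} You need a global trapping argument in place of a limiting one. The paper does not reprove the lemma, but it records that the proof in \cite{kilassa2026accumulation} is constructive. One follows the first few crossings of the relevant branch of $W_X^u$ with $Ox$. Planar arguments then confine the rest of the branch to a region bounded by an arc of $W_X^u$ and a segment of $Ox$; presumably these use the non-self-intersection of $W_X^u$ and the action of $L_{a,b}$ on the axes and quadrants. For instance, in Case~2 one gets $S=B_2$, and $W_X^{u-}\setminus[X,B_3]^u$ lies in the region bounded by $[B_1,B_3]^u\cup\overline{B_1B_3}$. All further crossings are then forced onto a segment disjoint from $\overline{SZ}$. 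The isolation of $Z$ and the existence of a nearest crossing follow together, without any compactness or limiting argument.
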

	
	In other words, the point $S$ is such that the segment $\overline{SZ}$ does not contain any further intersections of $W_X^u$ with $Ox$. This result serves as the basis for constructing a trapping polygon $\mathcal{D}$ using a distinguished point $E$. We define this point according to the following two cases.
	
	\fbox{Case 1:}  $W_X^{u+}$ intersects $Ox$ at points other than $Z$. Then $S \in W_X^{u+}$, and we define $E:=S$; see Figure \ref{fig:polygon_D_case1}.
	
	\fbox{Case 2:} $W_X^{u+}$ does not intersect $Ox$ at points other than $Z$. We then have $S \in W_X^{u-}$. In this case, let $B_1,B_2,B_3$ be the first three intersections of $W_X^{u-}$ with $Ox$, in the order along $W_X^{u-}$ starting from $X$; see Figure \ref{fig:polygon_D_case2}. In the proof of Lemma \ref{lem:point_S}, it has been shown that $S = B_2$. In addition, $W_X^{u-} \setminus [X, B_3]^u$ lies in the region bounded by $[B_1,B_3]^u \cup \overline{B_1B_3}$.
	
	Let $E^{-1}$ be the point on $[S,B_3]^u$ that is closest to $Z^{-1}$ in the Euclidean metric. If there are several such points, we choose $E^{-1}$ such that $[S,E^{-1}]^u$ contains all the others. We now define $E$ as $E := L_{a,b}(E^{-1})$.
	
	\begin{figure}[!ht]
	\begin{center}
	\begin{subfigure}[t]{0.475\textwidth}
	\begin{tikzpicture}[auto, scale=0.6, yscale=1.4285]
			\tikzstyle{nodec}=[draw,circle,fill=black,minimum size=2pt,
			inner sep=0pt, label distance=2mm]
			\tikzstyle{nodeh}=[draw,circle,fill=white,minimum size=4pt,
			inner sep=0pt]
			\tikzstyle{dot}=[circle,draw=none,fill=none,minimum size=0pt,inner sep=2pt, outer sep=-1pt]

			\draw[->] (-0.5,0)--(12,0) node [below]{$x$};
			\draw[->] (0,-5.5)--(0,5) node [left]{$y$};
			\node[label={[xshift=-0.2cm, yshift=-0.6cm]$0$}] at (0,0) {};
			
			\coordinate (e0) at (0,0);
			\coordinate (ex) at (1,0);
			\coordinate (ey) at (0,1);

			\coordinate (pr01) at (0,-5);
			\coordinate (pr02) at (6.04,3.56);
			
			\draw[green!40!black, thick, dashed] (pr01)--(pr02) node[right]{$y=a|x|-1$};
			
			\coordinate (a01) at (-0.48,4.28);
			\coordinate (a02) at (11,0);
			\coordinate (a03) at (8.72,-5.16);
			\coordinate (a04) at (2.94,-3.76);
			\coordinate (a05) at (1.2,-2.28);
			\coordinate (a06) at (4.52,0);
			\coordinate (a07) at (2.8,0.96);
			\coordinate (a08) at (3.6,2.12);
			\coordinate (a09) at (6,0);
			\coordinate (a10) at (5,-1);
			\coordinate (a11) at (7,0);
			\coordinate (a12) at (6,1);
			\coordinate (a13) at (7.84,0);
			\coordinate (a14) at (7.18,-0.76);
			\coordinate (a15) at (8.58,0);
			\coordinate (a16) at (7.42,0.72);
			\coordinate (a17) at (9.56,0);
			\coordinate (a18) at (8.18,-1.01);
			
			\draw[blue, thick] (a01) node[below]{$W_X^u$}--(a02)--(a03);
			\draw[blue, thick, dashed] (a03)--(a04);
			\draw[blue, thick] (a04)--(a05)--(a06)--(a07);
			\draw[blue, thick, dashed] (a07)--(a08);
			\draw[blue, thick] (a08)--(a09)--(a10)--(a11)--(a12)--(a13)--(a14);
			\draw[blue, thick, dashed] (a14)--(a15);
			\draw[blue, thick] (a15)--(a16)--(a17);
			
			\draw[thick] (a17)--(a02);
			
			\coordinate (x) at ($(a01)!0.2!(a02)$);
			
			\node[nodec, label={[above]$X$}] at (x) {};
			
			\node[nodec, blue, label={[above right, blue, xshift=2mm, yshift=-1mm]$Z^{-2}$}] at (intersection of a01--a02 and pr01--pr02) {};
			\node[nodec, blue, label={[above, blue]$Z$}] at (a02) {};
			\node[nodec, blue, label={[below right, blue]$Z^2$}] at (a03) {};
			\node[nodec, blue, label={[left, blue]$Z^{2i}$}] at (a05) {};
			\node[nodec, blue, label={[left, blue]$Z^{2i+2}$}] at (a07) {};
			\node[nodec, blue, label={[below, blue]$S\equiv E$}] at (a17) {};
			\node[nodec, blue, label={[right, blue, yshift=0.5mm]$S^{-2}$}] at (intersection of a08--a09 and pr01--pr02) {};
			
			\node[fill=blue!10!white] (dd) at (7.02,-3.3) {$\mathcal{D}$};
			
			\begin{pgfonlayer}{bg}
				\fill[blue!10!white] (a02.center)--(a03.center)--(a04.center)--(a05.center)--(a06.center)--(a07.center)--(a08.center)--(a09.center)--(a10.center)--(a11.center)--(a12.center)--(a13.center)--(a14.center)--(a15.center)--(a16.center)--(a17.center)--cycle;
			\end{pgfonlayer}
			\end{tikzpicture}
		\caption{$W_X^{u+}$ intersects the $x$-axis at points other than $Z$. \label{fig:polygon_D_case1}}
	\end{subfigure}
	\hfill
	\begin{subfigure}[t]{0.475\textwidth}
	\begin{tikzpicture}[auto, scale=0.6]
			\tikzstyle{nodec}=[draw,circle,fill=black,minimum size=2pt,
			inner sep=0pt, label distance=2mm]
			\tikzstyle{nodeh}=[draw,circle,fill=white,minimum size=4pt,
			inner sep=0pt]
			\tikzstyle{dot}=[circle,draw=none,fill=none,minimum size=0pt,inner sep=2pt, outer sep=-1pt]

			\draw[->] (-13,0)--(1,0) node [below]{$x$};
			\draw[->] (0,-4)--(0,11) node [left]{$y$};
			\node[label={[xshift=-0.2cm, yshift=-0.6cm]$0$}] at (0,0) {};
			
			\coordinate (e0) at (0,0);
			\coordinate (ex) at (1,0);
			\coordinate (ey) at (0,1);

			\coordinate (pr01) at (0,-3.68);
			\coordinate (pr02) at (-12.9,7.14);
			
			\draw[green!40!black, thick, dashed] (pr01)node[left, xshift=-1mm]{$y=a|x|-1$}--(pr02);
			
			\coordinate (a01) at (0.68,3.08);
			\coordinate (a02) at (-9.36,10.72);
			\coordinate (a03) at (-11.86,8.52);
			\coordinate (a04) at (-11.82,3.38);
			\coordinate (a05) at (-8,0);
			\coordinate (a06) at (-9.34,-1.82);
			\coordinate (a07) at (-1.66,-1.44);
			\coordinate (a08) at (-1.04,0);
			\coordinate (a09) at (-2.76,2.98);
			\coordinate (a10) at (-0.34,1.98);
			\coordinate (a11) at (-2.62,5.07);
			\coordinate (a12) at (-7.86,7.1);
			\coordinate (a13) at (-5.9,7.34);
			\coordinate (a14) at (-9.08,8.56);
			\coordinate (a15) at (-10.26,6.18);
			\coordinate (a16) at (-9.08,2.34);
			\coordinate (a17) at (-6,0);
			
			\draw[blue, thick] (a01)--(a02) node[pos=0.3, above]{$W_X^u$};
			\draw[blue, thick, dashed] (a02)--(a03);
			\draw[blue, thick] (a03)--(a04)--(a05)--(a06)--(a07)--(a08)--(a09)--(a10)--(a11)--(a12)--(a13)--(a14);
			\draw[blue, thick, dashed] (a14)--(a15);
			\draw[blue, thick] (a15)--(a16)--(a17);

			\coordinate (zm1) at (intersection of a01--a02 and e0--ey);
			
			\node[nodec, blue, label={[above right, blue, yshift=-1.5mm]$Z^{-1}$}] at (zm1) {};
			\node[nodec, blue, label={[left, blue]$Z^1$}] at (a02) {};
			\node[nodec, blue, label={[below right, blue]$B_1$}] at (a05) {};
			
			\node[nodec, blue, label={[above right, blue, xshift=-2mm]$B_2\equiv S$}] at (a08) {};
			
			\node[nodec, blue, label={[below right, blue]$B_3$}] at (a17) {};
			
			\node[nodec, blue, label={[below left, blue]$B_1^{-2}$}] at (intersection of a03--a04 and pr01--pr02) {};
			\node[nodec, blue, label={[below left, blue]$B_2^{-2}$}] at (intersection of a06--a07 and pr01--pr02) {};
			\node[nodec, blue, label={[below left, blue]$B_3^{-2}$}] at (intersection of a15--a16 and pr01--pr02) {};
			
			\coordinate (t) at ($(a10)!(zm1)!(a11)$);
			
			\node[nodec, blue, label={[right, blue, yshift=-0.5mm]$E^{-1}$}] at (t) {};
			
			\draw[thick] (zm1)--(t);
			
			\node[fill=blue!10!white] (ddm1) at (-6.54,4.56) {$L_{a,b}^{-1}(\mathcal{D})$};
			
			\begin{pgfonlayer}{bg}
				\fill[blue!10!white] (zm1.center)--(a02.center)--(a03.center)--(a04.center)--(a05.center)--(a06.center)--(a07.center)--(a08.center)--(a09.center)--(a10.center)--(t.center)--cycle;
			\end{pgfonlayer}
			\end{tikzpicture}
		\caption{$W_X^{u+}$ intersects the $x$-axis only at $Z$. \label{fig:polygon_D_case2}}
	\end{subfigure}
	\end{center}
	\caption{Construction of the point $E$ and the polygon $\mathcal{D}$. Reproduced from K.\ Kilassa Kvaternik, Accumulation sets and zero entropy dynamics in the Lozi map, \emph{Chaos} (2026), \url{https://doi.org/10.1063/5.0344038}, with the permission of AIP Publishing. \label{fig:polygon_D_construction}}
	\end{figure} 	
	
	In both cases, we define $\mathcal{D}$ to be the polygon enclosed by the boundary
	\begin{equation}
	\label{eq:polygon_D_boundary} 
	\partial\mathcal{D} = [Z,E]^u \cup \overline{ZE}.
	\end{equation}	  
	
	Note that $\Int\mathcal{D}$ contains the branch $W_X^{u+} \setminus [X,E]^{u}$. Applying $L_{a,b}$ and using the fact that $L_{a,b}(W_X^{u+}) = W_X^{u-}$, we obtain that $L_{a,b}(\mathcal{D})$ contains $W_X^{u-} \setminus [X,E^1]^{u}$. Therefore, the union $\mathcal{D} \cup L_{a,b}(\mathcal{D})$ contains all of $W_X^u$ except for $(Z,Z^1)^u$, which has an open neighborhood disjoint from the rest of $W_X^u$ (the reader can find the justification of the last claim in \cite{kilassa2026accumulation}, Corollary 3.6).
	 
	 It follows that the accumulation set $\ell$ of $W_X^u$ is contained in $\mathcal{D} \cup L_{a,b}(\mathcal{D})$. Therefore, $\ell$ can be represented as
	\begin{equation*}
	\ell = \ell_L \cup \ell_R,
	\end{equation*}
where $\ell_R = \ell \cap \mathcal{D}$ and $\ell_L = \ell \cap L_{a,b}(\mathcal{D})$ (which we refer to as the right and left part of the accumulation set).  
	
	In \cite{kilassa2026accumulation}, it was proved that $\mathcal{D}$ is $L_{a,b}^2$-invariant. Moreover, $\mathcal{D}$ is an eventually trapping region for $L_{a,b}^2$, that is, there exists $n \in \mathbb{N}$ such that $L_{a,b}^{2n}(\mathcal{D}) \subset \Int\mathcal{D}$. Furthermore, $\ell_R$ and $\ell_L$ can be represented as the nested intersections
	\begin{equation}
	\label{eq:ell_intersect_D}
	\ell_R = \bigcap_{k=0}^{\infty} L_{a,b}^{2k}(\mathcal{D}), \quad \ell_L = \bigcap_{k=0}^{\infty} L_{a,b}^{2k+1}(\mathcal{D}).
	\end{equation}
	
	Since $\mathcal{D}$ is compact, and $(L_{a,b}^{2k}(\mathcal{D}))_{k \in \mathbb{N}_0}$ is a nested sequence of compact sets with intersection $\ell_R$, we have that $\sup_{J \in L_{a,b}^{2k}(\mathcal{D})}\dist(J,\ell_R) \rightarrow 0$ as $k \rightarrow \infty$. Similarly, for $\ell_L$, we have $\sup_{J \in L_{a,b}^{2k+1}(\mathcal{D})}\dist(J,\ell_L) \rightarrow 0$ as $k \rightarrow \infty$. 
	
	Hence, for $A \in \mathcal{D}$, we have $A^{2k} \in L_{a,b}^{2k}(\mathcal{D})$ and $A^{2k+1} \in L_{a,b}^{2k+1}(\mathcal{D})$ for every $k \in \mathbb{N}_0$, so the sequence $(A^{2k})_{k \in \mathbb{N}_0}$ is attracted to $\ell_R$, and $(A^{2k+1})_{k \in \mathbb{N}_0}$ is attracted to $\ell_L$. Since both subsequences are attracted to $\ell$, we have that every point in $\mathcal{D}$ is attracted to $\ell$.
	
	\begin{cor} \label{cor:D_attracted_to_ell}
	For every point $A \in \mathcal{D}$, we have that $\dist(A^n,\ell) \rightarrow 0$ as $n \rightarrow \infty$.
	\end{cor}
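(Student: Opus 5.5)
The argument rests on two facts already recorded above: the nested representation \eqref{eq:ell_intersect_D} of $\ell_R$ and $\ell_L$, and the compactness of $\mathcal{D}$. Write $K_k = L_{a,b}^{2k}(\mathcal{D})$ and $M_k = L_{a,b}^{2k+1}(\mathcal{D})$. Because $\mathcal{D}$ is $L_{a,b}^2$-invariant, both sequences are nested: $K_{k+1} = L_{a,b}^{2k}(L_{a,b}^2(\mathcal{D})) \subseteq K_k$, and likewise $M_{k+1}\subseteq M_k$. Each set is a continuous image of a compact set, so each is compact and nonempty.

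The first step is to show that $\delta_k := \sup_{J\in K_k}\dist(J,\ell_R)\to 0$. I would argue by contradiction. If this fails, there exist $\varepsilon>0$, infinitely many $k$, and points $J_k\in K_k$ with $\dist(J_k,\ell_R)\geqslant\varepsilon$. All $J_k$ lie in the compact set $\mathcal{D}$, so a subsequence converges to some $J$. For each fixed $m$, the points $J_k$ with $k\geqslant m$ lie in the closed set $K_m$, hence $J\in K_m$. Therefore $J\in\bigcap_m K_m=\ell_R$. On the other hand, continuity of $\dist(\cdot,\ell_R)$ gives $\dist(J,\ell_R)\geqslant\varepsilon$, which is a contradiction. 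Here $\ell_R$ is nonempty, as an intersection of nested nonempty compacta, so the distance function is well defined. The same argument applied to $M_k$ gives $\sup_{J\in M_k}\dist(J,\ell_L)\to0$.

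The second step passes to an individual point. Fix $A\in\mathcal{D}$. Then $A^{2k}\in K_k$ and $A^{2k+1}\in M_k$. Since $\ell_R,\ell_L\subseteq\ell$, we have $\dist(\cdot,\ell)\leqslant\dist(\cdot,\ell_R)$ and $\dist(\cdot,\ell)\leqslant\dist(\cdot,\ell_L)$. This yields
\begin{equation*}
\dist(A^{2k},\ell)\leqslant\delta_k, \qquad \dist(A^{2k+1},\ell)\leqslant\sup_{J\in M_k}\dist(J,\ell_L).
\end{equation*}
Both right-hand sides tend to $0$, and merging the even and odd subsequences gives $\dist(A^n,\ell)\to0$.

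The only step with real content is the first, the uniform Hausdorff-type convergence of nested compacta to their intersection. It is handled by the standard compactness argument above, and the closedness of each $K_m$ is exactly what makes the limit point land in every $K_m$. Everything else is routine bookkeeping with the invariance of $\mathcal{D}$.
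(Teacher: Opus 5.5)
Your proof is correct and follows the paper's argument. The paper also uses the nested compact images $L_{a,b}^{2k}(\mathcal{D})$ and $L_{a,b}^{2k+1}(\mathcal{D})$, whose intersections are $\ell_R$ and $\ell_L$, and then handles even and odd iterates separately. The paper simply asserts that the supremum of the distance to the intersection tends to $0$, whereas you prove this with the standard compactness argument.
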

	
	The preceding construction shows that $\mathcal{D}$ is contained in the basin of attraction of $\ell$. We next show that, conversely, every point whose $\omega$-limit set meets $\ell$ belongs to the basin of $\ell$. We will then use the separation of $X,Y$ from $\ell$ and the structure of the non-wandering set to identify which points lie outside the basin of $\ell$.
	
	\begin{lem} \label{lem:approaching_ell}
	Let $(a,b) \in \mathfrak{R}$. If $A \in \mathbb{R}^2$ is a point such that $\omega(A,L_{a,b}) \cap \ell \neq \emptyset$, then $\dist(A^n,\ell) \rightarrow 0$ as $n \rightarrow \infty$.
	\end{lem}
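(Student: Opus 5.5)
The plan is to reduce the statement to Corollary~\ref{cor:D_attracted_to_ell}. We show that some forward iterate of $A$ lands in $\mathcal{D}$, or in $L_{a,b}(\mathcal{D})$, and then transport the convergence back to $A$. The point is that $\ell$ sits in the \emph{interior} of the trapping region. So an orbit that accumulates at a point of $\ell$ must actually enter the region at some finite time, and from then on it is attracted to $\ell$.

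First I would dispose of the case where $W_X^u$ meets $Ox$ and $Oy$ in only finitely many points. There $\ell=\{P,P'\}$ by \cite{misiurewicz2024zero} and Theorem 3.1 of \cite{kilassa2026accumulation}. Since $\{P,P'\}$ is an attracting period-two orbit, there are open neighborhoods $U\ni P$, $U'\ni P'$ whose points converge under $L_{a,b}^2$ to $P$ and $P'$ respectively. If $\omega(A,L_{a,b})$ contains, say, $P$, then some iterate $A^{m}$ lies in $U$. Hence $A^{m+2k}\to P$ and $A^{m+2k+1}\to L_{a,b}(P)=P'$, so $\dist(A^n,\ell)\to 0$.

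In the remaining case, where there are infinitely many intersections, $\mathcal{D}$ is defined and we use the eventual trapping property. Choose $n$ with $L_{a,b}^{2n}(\mathcal{D})\subset\Int\mathcal{D}$. By \eqref{eq:ell_intersect_D} we get
\[
\ell_R\subseteq L_{a,b}^{2n}(\mathcal{D})\subset\Int\mathcal{D}.
\]
Since $L_{a,b}$ is a homeomorphism, $L_{a,b}(\Int\mathcal{D})=\Int L_{a,b}(\mathcal{D})$, and likewise $\ell_L=L_{a,b}(\ell_R)$ by \eqref{eq:ell_intersect_D}. Hence $\ell_L\subset\Int L_{a,b}(\mathcal{D})$.

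Now take $Q\in\omega(A,L_{a,b})\cap\ell$ and a sequence $n_k\to\infty$ with $A^{n_k}\to Q$.
\begin{itemize}
\item If $Q\in\ell_R$, then $\Int\mathcal{D}$ is an open neighborhood of $Q$, so $A^{m}\in\mathcal{D}$ for some $m=n_k$. Corollary~\ref{cor:D_attracted_to_ell} applied to $A^{m}$ gives $\dist(A^{m+j},\ell)\to0$ as $j\to\infty$, which is the claim for $A$.
\item If $Q\in\ell_L$, the same argument gives $A^{m}\in L_{a,b}(\mathcal{D})$, i.e.\ $A^{m-1}\in\mathcal{D}$. We then apply Corollary~\ref{cor:D_attracted_to_ell} to $A^{m-1}$.
\end{itemize}

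The only delicate step is ensuring that $\ell$ lies in the interior of $\mathcal{D}\cup L_{a,b}(\mathcal{D})$ rather than merely in the region itself. Without this, an orbit could accumulate at $\ell$ while staying outside $\mathcal{D}$. The eventual trapping property $L_{a,b}^{2n}(\mathcal{D})\subset\Int\mathcal{D}$ from \cite{kilassa2026accumulation} is exactly what supplies this. Everything else is a routine shift of the index.
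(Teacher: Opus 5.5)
Your proposal is correct and follows essentially the same route as the paper: the finite case goes through the attracting orbit $\{P,P'\}$, and the infinite case goes through $\ell_R \subset L_{a,b}^{2k_0}(\mathcal{D}) \subset \Int\mathcal{D}$ followed by Corollary~\ref{cor:D_attracted_to_ell}. The only cosmetic difference is in the $\ell_L$ case: you use $\ell_L = L_{a,b}(\ell_R) \subset \Int L_{a,b}(\mathcal{D})$ and shift the index by one, whereas the paper uses the invariance $L_{a,b}(\omega(A,L_{a,b})) = \omega(A,L_{a,b})$ to reduce to the $\ell_R$ case.
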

	
	\begin{proof}
	We distinguish between two cases, depending on the number of intersections of $W_X^u$ with $Ox$ and $Oy$.
	
	Assume first that $W_X^u$ intersects $Ox$ and $Oy$ at finitely many points. In this case, $\ell$ is the attracting periodic orbit $\{P,P'\}$. Let $\varepsilon > 0$ be such that $B_{\varepsilon}(P)$ (resp.\ $B_{\varepsilon}(P')$) is contained in the basin of attraction of $P$ (resp.\ $P'$). 
	
	Now, let $A$ be a point such that $\omega(A,L_{a,b}) \cap \{P,P'\} \neq \emptyset$. Then arbitrarily large forward iterates of $A$ enter $B_{\varepsilon}(P) \cup B_{\varepsilon}(P')$. Hence, there exists a forward iterate $A^{n_1}$ that lies in the basin of attraction of $\{P,P'\}$, so forward iterates of $A$ are attracted to $\{P,P'\}$.
	
	Now assume that $W_X^u$ intersects $Ox$ and $Oy$ at infinitely many points, and consider a point $A$ such that $\omega(A,L_{a,b}) \cap \ell \neq \emptyset$. Since $\ell=\ell_L \cup \ell_R$, we have $\omega(A,L_{a,b}) \cap \ell_R \neq \emptyset$ or $\omega(A,L_{a,b}) \cap \ell_L \neq \emptyset$.
	
	We first consider the case $\omega(A,L_{a,b}) \cap \ell_R \neq \emptyset$. By \eqref{eq:ell_intersect_D}, we have that $\ell_R \subset L_{a,b}^{2k}(\mathcal{D})$ for every $k \in \mathbb{N}_0$. On the other hand, since $\mathcal{D}$ is eventually trapping, there exists $k_0 \in \mathbb{N}$ such that $L_{a,b}^{2k_0}(\mathcal{D})\subset \Int \mathcal{D}$. Choosing $k = k_0$, we obtain $\ell_R \subset L_{a,b}^{2k_0}(\mathcal{D}) \subset \Int \mathcal{D}$, so $\omega(A,L_{a,b}) \cap \Int\mathcal{D} \neq \emptyset$. Thus, there exists a forward iterate $A^{n_2} \in \Int\mathcal{D} \subset \mathcal{D}$, so the claim of the lemma follows from Corollary \ref{cor:D_attracted_to_ell}.
	
	Now assume that $\omega(A,L_{a,b}) \cap \ell_L \neq \emptyset$. Applying $L_{a,b}$ and using $L_{a,b}(\ell_L)=\ell_R$, we obtain that $L_{a,b}(\omega(A,L_{a,b})) \cap \ell_R \neq \emptyset$. Since $L_{a,b}$ is a homeomorphism, the $\omega$-limit set is invariant under both $L_{a,b}$ and $L_{a,b}^{-1}$, that is, $L_{a,b}(\omega(A,L_{a,b})) = \omega(A,L_{a,b})$, so we see that this case reduces to the previous one, which completes the proof.
	\end{proof}
	
	Note that the fixed points $X$ and $Y$ are isolated from $\ell$; in particular, $X,Y \not\in \ell$.
	
	\begin{lem} \label{lem:X_Y_isolated_from_ell}
	There exists $\varepsilon > 0$ such that $B_{\varepsilon}(X) \cap \ell = B_{\varepsilon}(Y) \cap \ell = \emptyset$.
	\end{lem}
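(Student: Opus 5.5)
Since $\ell = \Cl W_X^u \setminus W_X^u$ is closed (it will turn out to be compact), it suffices to show $X \notin \ell$ and $Y \notin \ell$. Indeed, if $\ell$ is closed and misses both fixed points, then $\dist(X,\ell) > 0$ and $\dist(Y,\ell) > 0$, and any $\varepsilon$ smaller than both distances works. So the plan has three steps: show $\ell$ is closed, exclude $Y$, and exclude $X$.

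\textbf{Closedness and boundedness.} I would split into the same two cases as in the proof of Lemma \ref{lem:approaching_ell}. If $W_X^u$ meets the axes finitely often, then $\ell = \{P,P'\}$. This set is finite, hence compact, and neither $P$ nor $P'$ is a fixed point. In the infinite case, \eqref{eq:ell_intersect_D} expresses $\ell_R$ and $\ell_L$ as nested intersections of compact sets. Hence $\ell = \ell_R \cup \ell_L$ is compact and contained in $\mathcal{D} \cup L_{a,b}(\mathcal{D})$.

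\textbf{Excluding $Y$.} In the infinite case, it is enough to check that $Y \notin \mathcal{D} \cup L_{a,b}(\mathcal{D})$. The boundary \eqref{eq:polygon_D_boundary} consists of the arc $[Z,E]^u$ together with the segment $\overline{ZE}$, and this arc lies in the half-plane $x>0$ on the $Z$ side. So I expect $\mathcal{D}$ to lie in a region disjoint from the third quadrant, and $L_{a,b}(\mathcal{D})$ as well, by using Figure \ref{fig:lozi_quadrants_images}. A cleaner route avoids the geometry altogether. Both $\ell_R$ and $\ell_L$ are $L_{a,b}^2$-invariant intersections, and $Y \in \ell$ would force $Y \in \ell_R$ or $Y \in \ell_L$. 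Then $Y$ would lie in $\Int \mathcal{D}$, because $\ell_R \subset L_{a,b}^{2k_0}(\mathcal{D}) \subset \Int \mathcal{D}$ (and similarly $\ell_L \subset \Int L_{a,b}(\mathcal{D})$). That seems harder to exclude directly, so I would rather use the nonwandering result from \cite{kilassa2026accumulation}.

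\textbf{Excluding $X$.} The main obstacle is showing $X \notin \ell$; the same local argument also handles $Y$. Suppose $X \in \ell$. Then there are points of $W_X^u$ arbitrarily close to $X$ that are not on a fixed local piece of the unstable manifold. Near the hyperbolic saddle $X$, the map is affine, since $X$ lies in the interior of $\mathcal{Q}_1$. Take a small box $U$ around $X$ with local stable and unstable segments. By the lambda-lemma (which is elementary here, since the map is linear near $X$), a sequence of arcs of $W_X^u$ entering $U$ near $X$ but not contained in $W^u_{loc}$ must, after backward iteration, cross $W^s_{loc}(X)$. More simply, take an arc of $W_X^u$ passing within $\delta$ of $X$ that is different from $W^u_{loc}$. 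Iterating it forward in the linear coordinates stretches it along the unstable direction. Iterating it backward instead stretches it along the stable direction until it crosses $W_X^s$. This yields a point of $W_X^u \cap W_X^s$ other than $X$, that is, a homoclinic point, which contradicts $(a,b) \in \mathfrak{R}$.

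For $Y$, I would use a parallel argument with the linearized saddle at $Y$. A point of $W_X^u$ accumulating on $Y$ from outside $W_Y^u$ would have to follow $W_Y^s$. The alternative is to use the inclusion of $\ell$ in the bounded polygon region together with the quadrant geometry to place $\ell$ away from $\mathcal{Q}_3$. I would try the geometric containment first: show $\mathcal{D} \cup L_{a,b}(\mathcal{D}) \cap \mathcal{Q}_3$ avoids a neighborhood of $Y$.
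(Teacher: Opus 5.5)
\textbf{The gap is in the treatment of $Y$.} This is the only nontrivial part of the lemma, and your proposal never establishes $Y \notin \ell$ in the case where $W_X^u$ meets the axes infinitely often. None of your routes closes it:
\begin{itemize}
\item The containment of $\mathcal{D} \cup L_{a,b}(\mathcal{D})$ away from $\mathcal{Q}_3$ is only stated as an expectation. Case 2, where $\mathcal{D}$ is the image of the polygon in Figure \ref{fig:polygon_D_case2}, is not addressed at all.
\item The route via Theorem \ref{thm:L2_non_wandering} cannot work, because $\Omega(L_{a,b}^2) \subseteq \ell \cup \{X,Y\}$ is perfectly consistent with $Y \in \ell$.
\item The ``parallel'' saddle argument at $Y$ does not contradict the definition of $\mathfrak{R}$. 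A local analysis near $Y$ could at best produce points of $W_X^u$ near, or on, $W_Y^s$, that is, a heteroclinic connection from $X$ to $Y$. Nothing in the hypotheses (no homoclinic points of $X$, $\{P,P'\}$ attracting) forbids that. In the paper, $W_X^u \cap W_Y^s = \emptyset$ (Corollary \ref{cor:WuXWsY_empty}) is deduced \emph{from} this lemma, so invoking it would be circular.
\end{itemize}
Conversely, your ``main obstacle'' is not an obstacle. Since $X \in W_X^u$ by definition, $X \notin \ell = \Cl W_X^u \setminus W_X^u$ immediately. Once $\ell$ is closed, this already gives $\dist(X,\ell) > 0$. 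The crossing step in your homoclinic argument is also unjustified as written. Backward iteration pushes a short arc of $W_X^u$ toward $W^{s}_{\mathrm{loc}}(X)$, but it only crosses $W^{s}_{\mathrm{loc}}(X)$ if it already straddles it.

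\textbf{The missing idea} is the disjointness $\mathcal{D} \cap L_{a,b}(\mathcal{D}) = \emptyset$. This comes from the construction: $\partial\mathcal{D}$ and $\partial L_{a,b}(\mathcal{D})$ do not meet, and neither polygon contains the other. Now take a fixed point $F \in \{X,Y\}$.
\begin{itemize}
\item If $F \in \mathcal{D}$, then $F = L_{a,b}(F) \in L_{a,b}(\mathcal{D})$.
\item If $F \in L_{a,b}(\mathcal{D})$, then $F = L_{a,b}^{-1}(F) \in \mathcal{D}$.
\end{itemize}
Both contradict disjointness. Hence $X$ and $Y$ lie outside the closed set $\mathcal{D} \cup L_{a,b}(\mathcal{D}) \supseteq \ell$, and a small ball around each misses $\ell$.

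Your abandoned ``cleaner route'' was one step from this. If $Y \in \ell_R$, then $Y = L_{a,b}(Y) \in L_{a,b}(\ell_R) = \ell_L$, so $Y \in \ell_R \cap \ell_L \subseteq \mathcal{D} \cap L_{a,b}(\mathcal{D})$. You only needed the disjointness to finish, with no geometry of $\mathcal{Q}_3$ and no local analysis at the saddles.
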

	
	\begin{proof}
	By the construction of $\mathcal{D}$, the boundaries $\partial\mathcal{D}$ and $\partial L_{a,b}(\mathcal{D})$ do not intersect and neither of these polygons is contained in the other. Therefore, $\mathcal{D} \cap L_{a,b}(\mathcal{D}) = \emptyset$. Now, if $X$ were contained in $\mathcal{D}$, then $L_{a,b}(X) = X \in L_{a,b}(\mathcal{D})$, so it would follow that $X \in \mathcal{D} \cap L_{a,b}(\mathcal{D}) \neq \emptyset$, which is a contradiction. Therefore, $X \notin \mathcal{D}$. The same argument, using $L_{a,b}(Y) = Y$, gives $Y \notin \mathcal{D}$.
	
	Since $\mathcal{D} \cup L_{a,b}(\mathcal{D})$ is a closed set in $\mathbb{R}^2$, there exist $\varepsilon_1, \varepsilon_2 >0$ such that $B_{\varepsilon_1}(X) \cap (\mathcal{D} \cup L_{a,b}(\mathcal{D})) = B_{\varepsilon_2}(Y) \cap (\mathcal{D} \cup L_{a,b}(\mathcal{D})) = \emptyset$. In particular, because $\ell \subset \mathcal{D} \cup L_{a,b}(\mathcal{D})$, we have $B_{\varepsilon_1}(X) \cap \ell = B_{\varepsilon_2}(Y) \cap \ell = \emptyset$. Taking $\varepsilon := \min\{\varepsilon_1, \varepsilon_2\}$ yields the desired $\varepsilon$ and completes the proof.
	\end{proof}
	
	\begin{cor} \label{cor:WuXWsY_empty}
	$W_X^u$ and $W_Y^s$ do not intersect.
	\end{cor}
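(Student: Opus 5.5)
We argue by contradiction and assume that some point $A$ lies in $W_X^u \cap W_Y^s$. The plan is to compare how $A$ behaves in backward time, which is governed by $W_X^u$, with how it behaves in forward time, which is governed by $W_Y^s$. Since $W_Y^s$ is invariant under $L_{a,b}$ and $L_{a,b}^{-1}$, the whole orbit of $A$ lies in $W_Y^s$. In particular $A^n \to Y$ as $n\to\infty$.

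The first step treats the case $A = X$. This is impossible: $X$ is fixed, so its forward orbit converges to $X \neq Y$, whereas every point of $W_Y^s$ has forward orbit converging to $Y$. Hence $A \in W_X^u \setminus \{X\}$. Since $W_X^u$ is invariant, $A^n \in W_X^u$ for all $n \geqslant 0$, and these iterates converge to $Y$. Thus $Y \in \Cl W_X^u$.

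The second step decides where $Y$ lies. There are two possibilities: either $Y \in W_X^u$, or $Y \in \Cl W_X^u \setminus W_X^u = \ell$. The second possibility is excluded by Lemma \ref{lem:X_Y_isolated_from_ell}, which gives $Y \notin \ell$. So we must rule out $Y \in W_X^u$. If $Y$ lay on $W_X^u$, its backward orbit would converge to $X$. But $Y$ is fixed, so its backward orbit is constantly $Y \neq X$. This contradiction completes the argument.

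Note that the argument never needs the whole forward orbit of $A$ to accumulate on $\ell$. It only uses that $Y$ is a limit of points of $W_X^u$ and is therefore either on $W_X^u$ or in $\ell$. The one point to check carefully is this dichotomy $\Cl W_X^u = W_X^u \cup \ell$, which is exactly the definition $\ell = \Cl W_X^u \setminus W_X^u$. Apart from that, the proof uses only Lemma \ref{lem:X_Y_isolated_from_ell} and the fact that $X$ and $Y$ are distinct fixed points. I expect no real obstacle.
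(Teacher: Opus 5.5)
Your proof is correct and is essentially the paper's argument: the forward iterates of $A$ stay on $W_X^u$ and converge to $Y$, so $Y \in \Cl W_X^u$, and Lemma \ref{lem:X_Y_isolated_from_ell} excludes $Y \in \ell$. You are slightly more careful than the paper, which passes directly from ``$Y$ is an accumulation point of $W_X^u$'' to ``$Y \in \ell$'' without noting, as you do, that $Y \notin W_X^u$ because $Y$ is a fixed point distinct from $X$ (your separate treatment of the case $A = X$ is harmless but not needed).
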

	
	\begin{proof}
	Assume by contradiction that there exists a point $A \in W_X^u \cap W_Y^s$. Since $A \in W_Y^s$, we have that $A^n \rightarrow Y$ as $n \rightarrow \infty$. Moreover, $A \in W_X^u$, so the convergence $A^n \rightarrow Y$ implies that the point $Y$ is an accumulation point of $W_X^u$. Therefore, $Y \in \ell$, which contradicts Lemma \ref{lem:X_Y_isolated_from_ell}.
	\end{proof}
	
	For parameters $(a,b) \in \mathfrak{R} \setminus \mathcal{R}$, it was also shown that the non-wandering set of $L_{a,b}^2$ is contained in the union of $\ell$ and the fixed points $X$ and $Y$ (Theorem 1.1 in \cite{kilassa2026accumulation}). The same inclusion holds for parameters $(a,b) \in \mathcal{R}$. Indeed, in this case, the authors in \cite{misiurewicz2024zero} proved that $\ell = \{P,P'\}$, and that $\Omega(L_{a,b}^2)=\{X,Y,P,P'\}$. We state these results collectively as a single theorem for convenient reference in the remainder of the paper.
	
	\begin{theorem} \label{thm:L2_non_wandering}
	For all $(a,b) \in \mathfrak{R}$, we have that $\Omega(L_{a,b}^2) \subseteq \ell \cup \{X,Y\}$.
	\end{theorem}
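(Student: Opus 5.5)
Theorem \ref{thm:L2_non_wandering} is a collective restatement of earlier results, so the plan is to split $\mathfrak{R}$ according to whether $(a,b)$ lies in the subregion $\mathcal{R}$ of \cite{misiurewicz2024zero} or in $\mathfrak{R}\setminus\mathcal{R}$, and to invoke the corresponding result in each case.

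\emph{Case $(a,b)\in\mathcal{R}$.} Here we rely on \cite{misiurewicz2024zero}, where it is shown that $\ell=\{P,P'\}$ and that the non-wandering set of the second iterate is exactly $\{X,Y,P,P'\}$. Substituting $\ell=\{P,P'\}$ gives $\Omega(L_{a,b}^2)=\ell\cup\{X,Y\}$, which is stronger than the required inclusion.

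\emph{Case $(a,b)\in\mathfrak{R}\setminus\mathcal{R}$.} This is Theorem 1.1 of \cite{kilassa2026accumulation}, which gives $\Omega(L_{a,b}^2)\subseteq \ell\cup\{X,Y\}$ directly. For orientation, I would recall how that argument is organized, since it should be structured in the same two subcases as Lemma \ref{lem:approaching_ell}.
\begin{enumerate}
\item If $W_X^u$ meets the axes in finitely many points, then $\ell=\{P,P'\}$ by Theorem 3.1 of \cite{kilassa2026accumulation}. One then shows that every point other than $X$ and $Y$ is wandering for $L_{a,b}^2$, unless it is one of the attracting period-two points.
\item If $W_X^u$ meets the axes in infinitely many points, the trapping polygon $\mathcal{D}$ and the descriptions \eqref{eq:ell_intersect_D} control $\ell_R$ and $\ell_L$. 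Points outside $\mathcal{D}\cup L_{a,b}(\mathcal{D})$ are handled by a wandering-domain argument.
\end{enumerate}

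The only genuine content, and the step where I expect the difficulty, is checking that the hypotheses of the cited results match $\mathfrak{R}$ exactly. Specifically, $\mathcal{R}$ should be precisely the subset of $\mathfrak{R}$ on which $W_X^u$ meets the axes only at $Z$ and $Z^{-1}$, and the main theorem of \cite{kilassa2026accumulation} should be stated for all of $\mathfrak{R}\setminus\mathcal{R}$. Both points are asserted in the paragraph preceding the theorem. Since the two cases cover $\mathfrak{R}$, the inclusion holds throughout $\mathfrak{R}$.
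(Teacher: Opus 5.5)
Your proposal is correct and matches the paper, which likewise justifies the theorem purely by citation: Theorem 1.1 of \cite{kilassa2026accumulation} for $(a,b)\in\mathfrak{R}\setminus\mathcal{R}$, and the result of \cite{misiurewicz2024zero} that $\ell=\{P,P'\}$ and $\Omega(L_{a,b}^2)=\{X,Y,P,P'\}$ for $(a,b)\in\mathcal{R}$. Your sketch of how the cited argument might be organized is not needed for the proof and the paper does not rely on it, so you can drop it.
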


	\subsection{Introducing the other fixed point} \label{subsec:fixed_point_Y}
	
	From now on, we consider parameters $(a,b) \in \mathfrak{R}$. The stable manifold $W_Y^s$ of $Y$ is an $L_{a,b}$-invariant and $L_{a,b}^{-1}$-invariant polygonal line. Its downward branch emanating from $Y$ intersects $Oy$ for the first time at the point
	\begin{equation*}
	T := \left(0,\: \frac{a+2b+\sqrt{a^2+4b}}{2(1-a-b)}\right).
	\end{equation*}
	Since $a+b>1$, we see that $T$ lies on $Oy^{-}$ and its forward image
	\begin{equation*}
	T^1=\left(\frac{2-a+\sqrt{a^2+4b}}{2(1-a-b)},\: 0\right)
	\end{equation*}
	lies on $Ox^{-}$; see Figure \ref{figure:stable_Y}. Moreover, $W_Y^s=\bigcup_{n=0}^{\infty}L_{a,b}^{-n}(\overline{YT})$.
	Let $W_Y^{s+}$ be the half of $W_Y^{s}$ starting at $Y$ and going down, passing through $T$. Also, let $W_Y^{s-}$ be the other half starting at $Y$ and passing through $T^1$. 
	
	Furthermore, for points $A_1,A_2\in W_Y^s$, we use the same notation for polygonal lines and line segments on $W_Y^s$ as for $W_X^s$ and $W_X^u$, except that we additionally emphasize the point $Y$. For example, $[A_1,A_2]^{s,Y}\subset W_Y^s$ denotes the polygonal line with $A_1$ and $A_2$ as endpoints. Notice that the above observations imply that $[T,T^1]^{s,Y}=\overline{TT^1}^{s,Y}$ is a straight line segment in $\mathcal{Q}_3$ containing $Y$.
	
	The main objective of this subsection is to locate $W_Y^s$ relative to $W_X^u$ and $W_X^s$. We proceed in two stages. First, we show that $W_Y^{s+}$ crosses $Ox^{+}$ at a point $C$ to the right of $Z$ (Lemmas \ref{lem:WsY_positive_x_01} and \ref{lem:WsY_positive_x_02}). Second, we show that the backward iterates of the corresponding segment of $W_Y^{s+}$ approach $W_X^{s+}$ (in the sense of Corollary \ref{cor:CR_0_preim_cvg_WXs+}) and propagate to infinity in $\mathcal{Q}_1$. These two facts imply that $W_Y^s$ separates the plane (Corollary \ref{cor:WsY_separates}).
	
	\begin{figure}[!ht]
	\includegraphics[width=\linewidth]{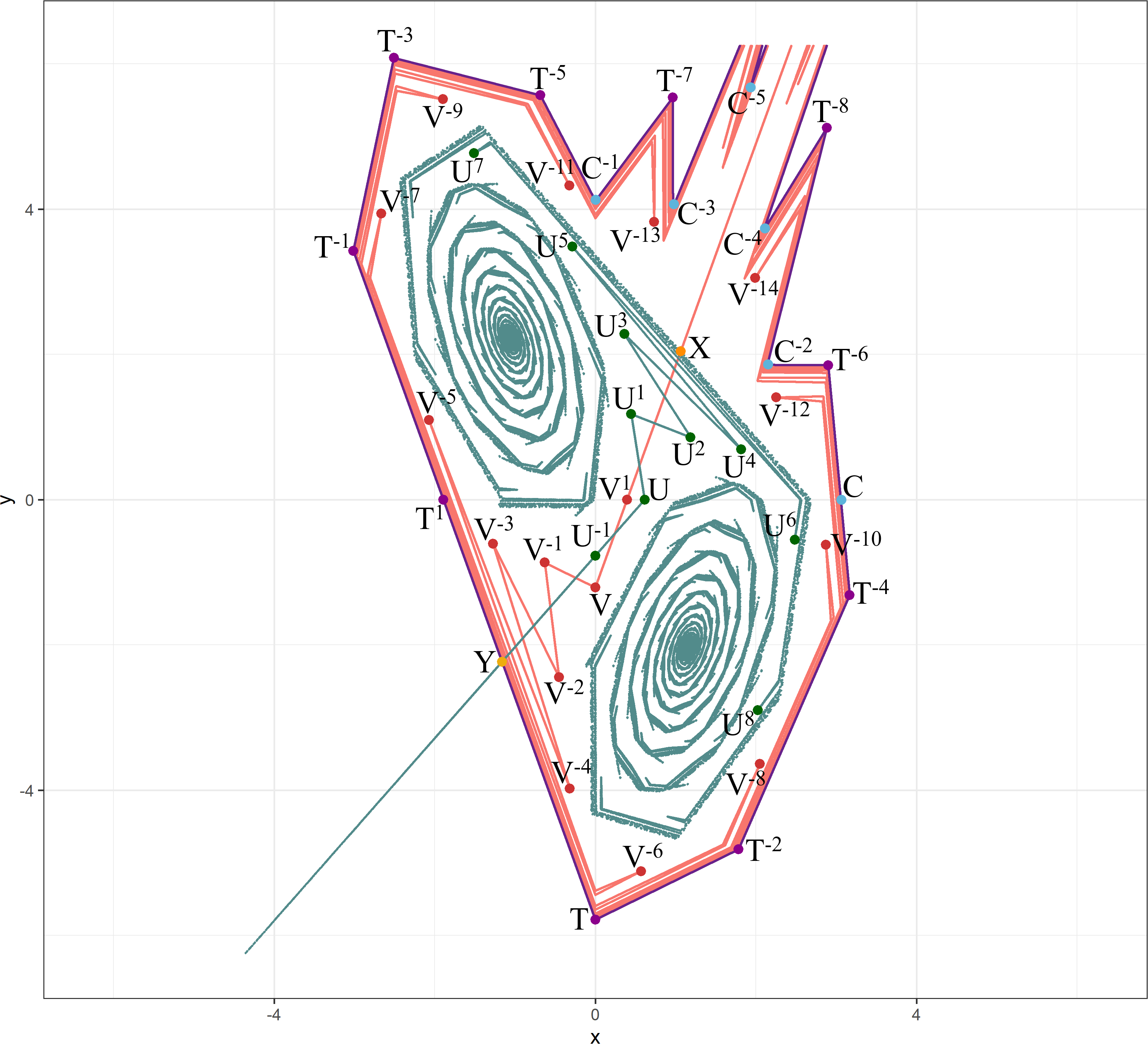}
	\caption{The stable (red) manifold of $X$ together with the stable (purple) and unstable (teal) manifold of $Y$ ($a=0.9$, $b=0.96$). Notice that $W_X^s$ accumulates on $W_Y^s$ (Lemma \ref{lem:WsXaccumWsY}). All V-points on $W_Y^s$ are iterates of points $T$ and $C$ (Lemmas \ref{lem:WsY_positive_x_01} and \ref{lem:WsY_positive_x_02}).}
	\label{figure:stable_Y}
	\end{figure}
	
	We now prove a concrete analogue of the Inclination Lemma for the Lozi map, showing that for an arc component $\alpha_0$ of $W_X^s$, the portion of $L_{a,b}^{-n}(\alpha_0)$ in $\mathcal{Q}_3$ gets arbitrarily close to $\overline{TT^{1}}^{s,Y}$ with respect to the Hausdorff distance as $n$ increases.   	 
	
	\begin{lem}[$W_X^s$ accumulates on $W_Y^s$]\label{lem:WsXaccumWsY}
	There exists a sequence $(\beta_n)_{n \in \mathbb{N}}$ of arc components of $W_X^s$ with the following property: for every $\varepsilon > 0$, there exists $n' \in \mathbb{N}$ such that $\beta_n$ and $\overline{TT^1}^{s,Y}$ are $\varepsilon$-close in the Hausdorff distance for all $n \geqslant n'$.
	\end{lem}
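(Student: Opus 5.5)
We use the zigzag structure from Lemma \ref{lem:zigzag} together with the affine nature of $L_{a,b}^{-1}$ on the third quadrant. By Lemma \ref{lem:zigzag}, $V^{-n_0}$ lies in $\mathcal{Q}_2$ and $V^{-n_0+1}$ lies in $\mathcal{Q}_3$. Hence the segment $\overline{V^{-n_0}V^{-n_0+1}}$ crosses $Ox^{-}$. Iterating backward, the remark after Lemma \ref{lem:zigzag} says that every further preimage of this segment meets both coordinate axes inside $\mathcal{Q}_3$. So for each $n$ we let $\beta_n$ be the arc component of $L_{a,b}^{-n}\bigl(\overline{V^{-n_0}V^{-n_0+1}}\bigr)\cap\mathcal{Q}_3$. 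This is a straight segment joining a point $U_n\in Oy^{-}$ to a point $R_n\in Ox^{-}$, with slope $s_{n_0+n}$ in the notation of Lemma \ref{lem:lozi_recurr_coef}.

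Step one concerns the slopes. By Lemma \ref{lem:lozi_recurr_coef}(4), $s_{n_0+n}\to\frac12(-a-\sqrt{a^2+4b})=\lambda_X^u$. We check that this limit equals the slope of $\overline{TT^1}$. From the coordinates, $T_y/(-T^1_x)$ simplifies to $-(a+2b+\sqrt{a^2+4b})/(2-a+\sqrt{a^2+4b})$, which should equal $\frac12(-a-\sqrt{a^2+4b})$; this is a routine computation using $\lambda^2+a\lambda-b=0$. Equivalently, the segment $\overline{TT^1}$ is the piece of the stable line of $Y$ in $\mathcal{Q}_3$. The stable eigenvector of $Y$ is $\binom{\lambda_Y^s}{b}$, and its slope $b/\lambda_Y^s$ equals $\lambda_X^u$.

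Step two, which we expect to be the main obstacle, concerns position. A segment with the right slope must still pass close to $Y$. We would use the fact that $L_{a,b}^{-1}$ restricted to the lower half-plane is the affine map $G(x,y)=\bigl(y/b,\;x-1+a\,y/b\bigr)$. This map fixes $Y$ and has eigenvalues $1/\lambda_Y^u\in(0,1)$ and $1/\lambda_Y^s$, with $|1/\lambda_Y^s|>1$.

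We then work in eigen-coordinates $(\xi,\eta)$ centred at $Y$, with $\xi$ along the stable direction and $\eta$ along the unstable direction. The line $\ell_n$ carrying $\beta_n$ is then mapped to the line carrying $\beta_{n+1}$ by $G$, wherever the relevant piece lies in the lower half-plane. This holds because the part of $\beta_{n+1}$ in $\mathcal{Q}_3$ is the $G$-image of a subarc of $L_{a,b}^{-n}(\cdot)$ that lies in the lower half-plane; we verify this from Figure \ref{fig:lozi_quadrants_images}. A line not parallel to the $\xi$-axis can be written as $\eta=c_n+m_n\xi$. Under $G$, the new intercept is $c_{n+1}=c_n/\lambda_Y^u$, and $m_n\to0$ (this is just the slope convergence restated). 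Hence $c_n\to0$ geometrically, and $\ell_n$ converges to the stable line $\eta=0$ uniformly on compact sets. The endpoints $U_n,R_n$ are the intersections of $\ell_n$ with the axes. Since the stable line meets the axes transversally at $T$ and $T^1$, we get $U_n\to T$ and $R_n\to T^1$. Therefore $d_H(\beta_n,\overline{TT^1})\le\max\{|U_n-T|,|R_n-T^1|\}\to0$, which proves the lemma.

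The delicate point in step two is to justify that $\beta_{n+1}$ lies on $G(\ell_n)$ even though the preimage of $\beta_n$ may partly leave $\mathcal{Q}_3$. If this bookkeeping fails, we would instead track only the point $R_n\in Ox^{-}$. Its preimage $G(R_n)$ lies on $Oy^{-}$ and is the endpoint $U_{n+1}$. So $U_{n+1}=G(R_n)$, and $\beta_{n+1}$ is the line through $U_{n+1}$ with slope $s_{n_0+n+1}$. It then suffices to show that $U_n\to T$. Since $G$ maps $Ox^{-}$ to $Oy^{-}$, this reduces to a one-dimensional affine recursion for the $y$-coordinate of $U_n$ with asymptotically constant coefficients. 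Its unique fixed point is $T_y$, and that fixed point is attracting because the effective multiplier is $1/\lambda_Y^u<1$.
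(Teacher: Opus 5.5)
Your proof is correct, but the positional step takes a different route from the paper's.

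The paper uses the actual unstable manifold of $Y$ as a transversal. It first shows, via $W_X^s\cap W_Y^s=\emptyset$, that $Y$ lies outside the triangle $OA_1A_2$ cut off by $\beta_{n_0+1}$. It then checks $U^{-1}_y>V_y$, so that the segment $\overline{YU^{-1}}\subset W_Y^u$ genuinely crosses $\beta_{n_0+1}$. Since $\beta_{n+1}\subseteq L_{a,b}^{-1}(\beta_n)$, the crossing points form a backward orbit on $W_Y^u$ converging to $Y$. Together with Lemma~\ref{lem:lozi_recurr_coef}(4), this forces the endpoints to $T$ and $T^1$.

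Your intercept $c_n$ is exactly the coordinate of $\ell_n\cap\{\xi=0\}$, the intersection of the carrying line with the unstable eigenline of $Y$. So you run the same contraction mechanism, but on full lines under the affine extension $G$ rather than on segments. This gains you two things:
\begin{itemize}
\item The intersection always exists. You only need $\ell_n$ not parallel to the unstable direction, which is automatic: $\ell_n$ has negative slope while $\binom{\lambda_Y^u}{b}$ has positive slope. Hence neither the triangle argument nor $U^{-1}_y>V_y$ is needed.
\item You get $m_{n+1}=m_n\lambda_Y^s/\lambda_Y^u$ directly. This makes Lemma~\ref{lem:lozi_recurr_coef}(4) superfluous here and gives explicit geometric rates.
\end{itemize}
The paper's version, in exchange, shows that each $\beta_n$ really meets $W_Y^u$.

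The step you flag as delicate is routine. Since $\beta_n$ lies in the lower half-plane, $L_{a,b}^{-1}(\beta_n)=G(\beta_n)$ is a straight segment contained in $L_{a,b}^{-n-1}\bigl(\overline{V^{-n_0}V^{-n_0+1}}\bigr)$. Near $G(R_n)\in Oy^{-}$ it lies in $\mathcal{Q}_3$, so it shares a nondegenerate piece with the straight segment $\beta_{n+1}$, and therefore $\ell_{n+1}=G(\ell_n)$.

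Two slips to fix:
\begin{itemize}
\item On the lower half-plane, $L_{a,b}^{-1}(x,y)=(y/b,\,x-1-ay/b)$. Your sign gives the upper branch, which fixes $X$ rather than $Y$.
\item The lines of the form $\eta=c_n+m_n\xi$ are those not parallel to the $\eta$-axis, not the $\xi$-axis.
\end{itemize}
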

	
	\begin{proof}
	Let $n_0 \in \mathbb{N}$ be such that $[V,V^{-n_0}]^s$ is the zigzag part of $W_X^s$, as in Lemma \ref{lem:zigzag}. Moreover, for every $n \in \mathbb{N}_0$, let $\alpha_n := [V^{-n},V^{-n-1}]^s$, and let $\beta_n$ be the intersection of $\alpha_n$ with the third quadrant $\mathcal{Q}_3$.
	
	We have that $\alpha_{n+1} = L_{a,b}^{-1}(\alpha_n)$ for every $n \in \mathbb{N}_0$. Furthermore, $\alpha_j$ is a straight line segment for all $j \leqslant n_0$, and $\alpha_j$ intersects both coordinate axes for all $j > n_0$. In addition, $\beta_n$ is a straight line segment for every $n \in \mathbb{N}_0$, and the endpoints of $\beta_j$ lie on the coordinate axes for all $j > n_0$. Also notice that the slope of each $\beta_n$ is exactly $s_n$ from Lemma \ref{lem:lozi_recurr_coef}.
	
	Now observe the triangle $OA_1A_2$, where $O$ is the origin of the coordinate system and $A_1$, $A_2$ are the intersections of $\beta_{n_0+1}$ with $Ox$ and $Oy$ respectively. We claim that $Y$ lies outside that triangle. By contradiction, assume that $Y$ lies in the triangle. In that case, the line segment $\overline{TT^1}^{s,Y}$ intersects the zigzag part of $W_X^s$ or $\alpha_{n_0+1}$. Since $W_X^s$ and $W_Y^s$ do not intersect, this is a contradiction and we thus conclude that $Y$ is contained in $\mathcal{Q}_3$ outside the triangle $OA_1A_2$.
	
	On the other hand, consider the unstable manifold $W_Y^u$ of $Y$, more precisely, the maximal ray on $W_Y^u$ containing $Y$. That portion of $W_Y^u$ intersects $Oy$ at the point
	\begin{equation} \label{eq:U_minus1}
	U^{-1} := \left(0,\:\frac{a+2b-\sqrt{a^2+4b}}{2(1-a-b)}\right),
	\end{equation}
	and one easily verifies that $U^{-1}$ lies above $V$, that is, $U^{-1}_y > V_y$; see Figure \ref{figure:stable_Y}. Therefore, $W_Y^u$ intersects $\beta_{n_0+1}$. Since $W_Y^u$ is $L_{a,b}^{-1}$-invariant and $\beta_{n+1} \subseteq L_{a,b}^{-1}(\beta_n)$ for every $n \in \mathbb{N}_0$, it follows that $W_Y^u$ intersects every $\beta_n$. Moreover, the sequence of these intersection points is the backward orbit of a single point, and hence these intersection points converge to $Y \in \overline{TT^1}^{s,Y}$. In addition, the slopes of $\beta_n$ also converge to the slope of $\overline{TT^1}^{s,Y}$ (Lemma \ref{lem:lozi_recurr_coef}(4)). Since the endpoints of all $\beta_n$ lie on the coordinate axes, it follows that the endpoints of $\beta_n$ converge to $T$, $T^1$, respectively. Therefore, the sequence $(\beta_n)_n$ satisfies the statement of the lemma, which completes the proof.
	\end{proof}
	
	As a consequence, since $W_X^s$ and $W_Y^s$ are both $L_{a,b}^{-1}$-invariant, for every polygonal segment $[A,B]^{s,Y}\subset W_Y^s$, there is a sequence of polygonal segments on $W_X^s$ which converges to $[A,B]^{s,Y}$ with respect to the Hausdorff distance.
	
	Now observe the backward iterates of $T$. Since $T$ lies on $Oy^{-}$, its preimage $T^{-1}$ lies on $y=a|x|-1$ in the second quadrant $\mathcal{Q}_2$, so $T^{-2}$ lies in $\mathcal{Q}_4$ or $\mathcal{Q}_1$ below the line $y=ax-1$. In general, if an even backward iterate of $T$ lies in $\mathcal{Q}_4$, then its preimage lies in $\mathcal{Q}_2$ or $\mathcal{Q}_3$, and if an odd backward iterate of $T$ lies in $\mathcal{Q}_2$ below the line $x = 1 - \frac{a}{b}y$, its preimage lies in $\mathcal{Q}_4$ (we refer the reader to the diagrams in Figure \ref{fig:lozi_quadrants_images} as a visual aid). This might suggest that all even backward iterates of $T$ would lie in $\mathcal{Q}_4$ and all odd backward iterates in $\mathcal{Q}_2$, or that $T^{-n}$ would lie in $\mathcal{Q}_3$ for some $n\geqslant 2$. However, we will show in Lemmas \ref{lem:WsY_positive_x_01} and \ref{lem:WsY_positive_x_02} that neither alternative can occur. 
	
	Recall that $P$ and $P'$ denote the period-two points for $L_{a,b}$ in $\mathcal{Q}_4$ and $\mathcal{Q}_2$, respectively.
	
	\begin{lem} \label{lem:P_above_T}
	We have that $P_y > T_y$, that is, $P$ lies above $T$.
	\end{lem}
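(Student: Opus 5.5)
The plan is to prove the inequality by direct computation with the explicit coordinates of $P$ and $T$, since both are given in closed form. First, put $c := a+b-1$, which is positive because $a+b>1$, and write $D := a^2+(1-b)^2 > 0$ and $s := \sqrt{a^2+4b}$. In this notation
\begin{equation*}
P_y = \frac{b(1-a-b)}{a^2+(1-b)^2} = -\frac{bc}{D}, \qquad T_y = \frac{a+2b+s}{2(1-a-b)} = -\frac{a+2b+s}{2c}.
\end{equation*}
Both quantities are negative. The claim $P_y > T_y$ is therefore equivalent to $\frac{bc}{D} < \frac{a+2b+s}{2c}$. Multiplying by the positive number $2cD$, this becomes
\begin{equation*}
2bc^2 < D\,(a+2b+s).
\end{equation*}

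Next, bound both sides. On the right, $s > a$ and $b>0$, so $a+2b+s > 2a+2b$, and hence $D(a+2b+s) > 2D(a+b)$. On the left, $c^2 = \bigl(a-(1-b)\bigr)^2 = a^2+(1-b)^2 - 2a(1-b)$. Since $a>0$ and $0<b<1$, this gives $c^2 < D$. Combining this with $0<b<1<a+b$ yields
\begin{equation*}
2bc^2 < 2D < 2D(a+b) < D(a+2b+s),
\end{equation*}
which is the required inequality.

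The argument uses only $0<b<1$, $a>0$ and $a+b>1$; the attracting condition on $\{P,P'\}$ is not needed. The one step where care is required is the sign bookkeeping. Both coordinates are negative, so the direction of the inequality must be tracked when passing to absolute values and clearing denominators. Writing everything in terms of the positive quantity $c$, as above, keeps this transparent. Once the inequality is in polynomial form, the two elementary bounds $c^2 < D$ and $s > a$ close it with room to spare, so I expect no real obstacle.
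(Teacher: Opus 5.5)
Your proof is correct and follows essentially the same route as the paper: both clear the oppositely signed denominators to reduce the claim to $2b(a+b-1)^2 < \bigl(a+2b+\sqrt{a^2+4b}\bigr)\bigl(a^2+(1-b)^2\bigr)$, and then close it with elementary bounds. The only difference is how the estimates are paired. The paper multiplies $a+2b+\sqrt{a^2+4b} > 4b$ by $a^2+(1-b)^2 > \tfrac12(a+b-1)^2$, whereas your identity $a^2+(1-b)^2 = (a+b-1)^2 + 2a(1-b)$ gives the sharper $c^2 < D$ directly, so with $b<1$ you only need $a+2b+\sqrt{a^2+4b} > 2$.
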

	
	\begin{proof}
	We know that
	\begin{equation*}
	P_y=\frac{b(1 - a - b)}{a^2 + (1 - b)^2},\ T_{y}=\frac{a + 2b + \sqrt{a^2 + 4b}}{2(1 - a - b)},
	\end{equation*}
	and since the first denominator is positive and the second one is negative, $P_y > T_y$ is equivalent to
	\begin{equation} \label{eq:P_above_T_01}
	(a + 2b + \sqrt{a^2 + 4b})(a^2 + (1 - b)^2) > 2b(a + b - 1)^2.
	\end{equation}
	For $a + b  > 1$ and $-1 < b < 1$, we have that
	\begin{equation*}
	a > 1 - b \implies a^2 > 1 - 2b + b^2,
	\end{equation*}
	and therefore,
	\begin{equation*}
	a + 2b + \sqrt{a^2 + 4b} > 1 + b + \sqrt{a^2 + 4b} > 1 + b + \sqrt{1 + 2b + b^2} = 2 + 2b.
	\end{equation*}
	For $b < 1$, we have $2 + 2b > 4b$, so we obtain
	\begin{equation} \label{eq:P_above_T_02}
	a + 2b + \sqrt{a^2 + 4b} > 4b.
	\end{equation}
	Furthermore, by $(a - (1 - b))^2 > 0$, we have
	\begin{equation} \label{eq:P_above_T_03}
	a^2 + (1 - b)^2 > \tfrac{1}{2}(a - b + 1)^2.
	\end{equation}
	We claim that $(a - b + 1)^2 > (a + b - 1)^2$. Indeed,
	\begin{equation*}
	\begin{split}
	(a - b + 1)^2 > (a + b - 1)^2 & \Leftrightarrow (a - b + 1)^2 - (a + b - 1)^2 > 0 \\
								 & \Leftrightarrow (a - b + 1 + a + b + 1)(a - b + 1 - a - b + 1) > 0 \\
								 & \Leftrightarrow 4a(1 - b) > 0, 
	\end{split}
	\end{equation*}
	and the last inequality holds due to $a > 0$ and $1 - b > 0$. Now, \eqref{eq:P_above_T_03} implies
	\begin{equation} \label{eq:P_above_T_04}
	a^2 + (1 - b)^2 > \tfrac{1}{2}(a + b - 1)^2.
	\end{equation}
	Multiplying \eqref{eq:P_above_T_02} and \eqref{eq:P_above_T_04} yields the desired inequality \eqref{eq:P_above_T_01}.
	\end{proof}	
	
	\begin{lem}\label{lem:WsY_positive_x_01}
	$W_Y^{s+}$ intersects $Ox^{+}$.
	\end{lem}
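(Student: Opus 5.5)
We argue by contradiction: suppose $W_Y^{s+}$ never meets $Ox^{+}$, and follow the backward orbit of $T$, i.e.\ the branch $W_Y^{s+}=\{Y\}\cup\overline{YT}\cup\bigcup_{n\geqslant1}L_{a,b}^{-n}(\overline{T^{1}T})$.

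First we locate the early backward images. $L_{a,b}^{-1}$ sends $Oy^{-}$ to the line $y=a|x|-1$. Since $T\in Oy^{-}$ and $\overline{TT^1}^{s,Y}$ lies in $\mathcal{Q}_3$, the arc $L_{a,b}^{-1}(\overline{T^{1}T})=\overline{TT^{-1}}$ runs from $T$ into $\mathcal{Q}_2$, ending at $T^{-1}$ on $y=-ax-1$. We then track how subsequent arcs alternate between the lower right region ($\mathcal{Q}_4$, below $y=ax-1$) and $\mathcal{Q}_2$, using Figure~\ref{fig:lozi_quadrants_images}. As long as the branch avoids $Ox^{+}$, it stays in the lower half-plane on the right side. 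On the lower half-plane, and on each side of $Oy$, $L_{a,b}^{-1}$ acts affinely. So each arc $L_{a,b}^{-2k}(\overline{TT^{-1}})$ is a polygonal line whose pieces are affine images of earlier pieces, and $L_{a,b}^{-2}$ restricted to the relevant pieces ($\mathcal{Q}_4\to\mathcal{Q}_2\to\mathcal{Q}_4$) is one fixed affine map $M$ with $\det M=b^{-2}>1$. The branch cannot enter $\mathcal{Q}_3$ again either: $\mathcal{Q}_3$ contains $\overline{TT^1}^{s,Y}$, and re-entry would force a self-intersection of $W_Y^s$ or a crossing with $W_X^s$ near the zigzag part. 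The second alternative is ruled out by the accumulation Lemma~\ref{lem:WsXaccumWsY} together with $W_X^s\cap W_Y^s=\emptyset$.

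The main step, and the expected obstacle, is to show that under this assumption the even iterates $T^{-2k}$ and odd iterates $T^{-2k-1}$ would have to converge. Here is the plan. The composite affine map $M$ on $\mathcal{Q}_4\to\mathcal{Q}_4$ has the fixed point $P$, since $P$ and $P'$ lie in exactly these quadrants and $L_{a,b}^2(P)=P$. Moreover $M^{-1}=L_{a,b}^2|$ is a contraction near $P$, because $\{P,P'\}$ is attracting, so $M$ is expanding with $P$ as a repelling (source) fixed point. Now $T^{-2k}=M^{k}(T^{0})$ formally, where we extend $T$'s orbit through the affine pieces. Expansion away from $P$ forces $|T^{-2k}-P|\to\infty$ unless $T$ is exactly $P$'s affine continuation. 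The points therefore cannot stay forever in the bounded wedge of $\mathcal{Q}_4$ between $Ox^{+}$ and $y=ax-1$ near $P$; they must exit. Exiting through $y=ax-1$ (into $\mathcal{Q}_1$ at the image level) or through $Ox^{+}$ gives an intersection with $Ox^{+}$. Exiting through $Oy^{-}$ toward $\mathcal{Q}_3$ is excluded by the previous paragraph.

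Lemma~\ref{lem:P_above_T} is used to pin the geometry down. Because $P_y>T_y$, the point $T$ lies below $P$, so the arc of $W_Y^{s+}$ starts on the "outer" side of $P$. Its images under $M$ are pushed outward, away from $P$, and hence toward the $x$-axis and rightward. They are not pushed toward $Oy^{-}$ or into a spiral around $P$; the eigenvalues of $M$ are real because the Lozi period-two eigenvalues for these parameters are real. To make this rigorous, I would compute the eigen-directions of $M$ at $P$, which are $\binom{\lambda}{b}$-type vectors for the products. I would then show that the half-plane bounded by the line through $P$ with the contracting (for $L^2$) direction, on $T$'s side, is mapped by $M$ into itself and moved monotonically toward $Ox^{+}$. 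Since a piecewise-affine path escaping the bounded wedge must cross its boundary, $L_{a,b}^{-2k}(\overline{TT^{-1}})$ meets $Ox^{+}$ for some $k$, contradicting the assumption.
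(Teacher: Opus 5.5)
\textbf{Gap: ruling out a return of $W_Y^{s+}$ to $\mathcal{Q}_3$.} Your proof turns on the claim that $W_Y^{s+}$ cannot return to $\mathcal{Q}_3$, and the reason you give does not establish it. Suppose $T^{-2j+2}\in\mathcal{Q}_4$ and $T^{-2j}\in\mathcal{Q}_3$. The preceding odd arc $[T^{-2j+3},T^{-2j+1}]^{s,Y}$ lies in $L_{a,b}^{-1}(\mathcal{Q}_4)$, i.e.\ above $y=-ax-1$, so it meets $Ox^-$ to the right of $T^1$. Consequently the returning even arc meets $Oy^-$ \emph{above} $T$, and enters the open triangle $OTT^1$ without touching $\overline{TT^1}^{s,Y}$. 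So no self-intersection of $W_Y^s$ arises.

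Lemma~\ref{lem:WsXaccumWsY} does not give a crossing with $W_X^s$ either. It only says that arcs $\beta_n\subset W_X^s$, each joining $Oy^-$ to $Ox^-$, converge to $\overline{TT^1}^{s,Y}$ from inside that triangle. The same is true of the zigzag part. An arc entering through $\overline{OT}$ and running between two of these arcs meets none of them.

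Ruling out this return is the hard part of the lemma, and the paper needs an idea your proposal does not have:
\begin{enumerate}
\item Since $L_{a,b}^{-1}$ multiplies the distance to $YU^{-1}$ by $1/|\lambda_Y^s|>1$ in $\mathcal{Q}_3$, the branch must cross $Oy^-$ again, at a point $H$.
\item The polygon $\mathcal{J}$ bounded by $\overline{TH}\cup[T,H]^{s,Y}$ satisfies $L_{a,b}^{-2}(\mathcal{J})\subseteq\mathcal{J}$.
\item Brouwer gives a fixed point of $L_{a,b}^{-2}$ in $\mathcal{J}$, which can only be $P$.
\item Since $P\in\ell=\Cl W_X^u\setminus W_X^u$, some point of $W_X^u$ lies in $\Int\mathcal{J}$. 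Its $L_{a,b}^{-2}$-orbit stays in $\mathcal{J}$ yet converges to $X\notin\mathcal{J}$, a contradiction.
\end{enumerate}

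\textbf{Errors in your main step.}
\begin{itemize}
\item The eigenvalues of $DL_{a,b}^2(P)$ are the roots of $\mu^2-(2b-a^2)\mu+b^2=0$. They are non-real whenever $a^2<4b$, for instance at both parameter pairs used in the figures. So in general $M$ has no real eigendirections.
\item When the eigenvalues are real, both are negative: the trace is at most $-2b$ and the determinant is $b^2$. Then $M$ swaps the two sides of every eigenline through $P$.
\item In neither case is a half-plane ``on $T$'s side'' invariant, and there is no monotone drift toward $Ox^+$.
\item In your scenario the even iterates lie in $L_{a,b}^{-1}(\mathcal{Q}_2)=\{x\geqslant 0,\ y\leqslant ax-1\}$, i.e.\ \emph{below} $y=ax-1$. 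That region is unbounded and contains $P$, so escaping ``the bounded wedge'' proves nothing.
\end{itemize}

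\textbf{What can be salvaged.} Your underlying observation is sound. Once the no-return step is proved, it gives a shorter ending than the paper's, which bounds the odd iterates and combines Theorem~\ref{thm:L2_non_wandering} with the basin of $\ell$.
\begin{itemize}
\item $T^{-2k}=M^k(T)$ with $M$ affine, fixing $P$, and expanding, so $|T^{-2k}|\to\infty$.
\item If every $T^{-2k}$ stayed in $\mathcal{Q}_4$, then $u_k=f(T^{-2k}-P)$ with $f(x,y)=x-y$ would tend to $+\infty$.
\item But $u_k$ satisfies a linear recurrence whose characteristic roots are the eigenvalues of $DM$, hence non-real or negative. So $u_k\leqslant 0$ for infinitely many $k$, a contradiction.
\end{itemize}
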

	
	\begin{proof}
	We will prove the lemma in two steps. Assume by contradiction that $W_Y^{s+}$ does not intersect $Ox^{+}$. Under this assumption, in the first step of the proof, we show that $W_Y^{s+}$ does not intersect $Oy^{-}$ at points other than $T$. As a consequence, $W_Y^s \setminus [T,T^1]^{s,Y}$ is contained in $\mathcal{Q}_2 \cup \mathcal{Q}_4$, from which we derive a contradiction in the second step of the proof.
	
	\noindent\emph{$1^{\circ}$ Under the assumption that $W_Y^{s+}$ does not intersect $Ox^{+}$, the only intersection of $W_Y^{s+}$ with $Oy^{-}$ is $T$}
	
	Assume by contradiction that $W_Y^{s+}$ intersects $Oy^{-}$ at points other than $T$. Then there exists $j \in \mathbb{N}$ such that $T^{-2j+2} \in \mathcal{Q}_4$ and $T^{-2j} \in \mathcal{Q}_3$; see Figure \ref{figure:prop_C_case_1}. We consider the smallest such $j$. Then $[T^{-1},T^{-2j+3}]^{s,Y} \subset \mathcal{Q}_2$ and $[T^{-2},T^{-2j+2}]^{s,Y} \subset \mathcal{Q}_4$.
	
	\begin{figure}[!ht]
	\begin{center}
	\begin{tikzpicture}[auto, scale=1.3]
			\tikzstyle{nodec}=[draw,circle,fill=black,minimum size=2pt,
			inner sep=0pt, label distance=2mm]
			\tikzstyle{nodeh}=[draw,circle,fill=white,minimum size=4pt,
			inner sep=0pt]
			\tikzstyle{dot}=[circle,draw=none,fill=none,minimum size=0pt,inner sep=2pt, outer sep=-1pt]

			\draw[->] (-5.5,0)--(5,0) node [below]{$x$};
			\draw[->] (0,-6.5)--(0,5.5) node [left]{$y$};
			\node[label={[xshift=-0.2cm, yshift=-0.7cm]$0$}] at (0,0) {};
			
			\coordinate (e0) at (0,0);
			\coordinate (ex) at (1,0);
			\coordinate (ey) at (0,1);
			
			\coordinate (xim1) at (-2,5);
			\coordinate (xim2) at (-2,-5);
			\coordinate (xim0) at (4,0);
			
			\draw[thick, dashed] (xim2)--(xim0)--(xim1) node[left]{$x=1-\frac{a}{b}|y|$};

			\coordinate (uY1) at (2,0);
			\coordinate (uY2) at (-4,-3);
			
			\coordinate (um1) at (intersection of e0--ey and uY1--uY2);
			
			\draw[teal, thick] (uY1)--(uY2) node[below]{$W_Y^{u}$};

			\coordinate (r2) at (0,-5.74);
			\coordinate (r3) at (2.66,-4.7);
			\coordinate (r4) at (3.48,-2.62); 
			\coordinate (r5) at (1.2,-1.25); 
			\coordinate (r6) at (0,-3);
			\coordinate (r7) at (-0.66,-2.19);
			\coordinate (r8) at (-1,-3);
			\coordinate (r9) at (-0.34,-3.38);
			\coordinate (r10) at (-0.48,-4.55); 
			\coordinate (r11) at (-0.19,-3.81);
			\coordinate (r12) at (0.29,-3.51); 
			\coordinate (r14) at (-4.75,2.27); 
			\coordinate (r15) at (-2.2,3.8); 
			\coordinate (r16) at (-0.8,2.98);
			\coordinate (r17) at (-0.38,1.28); 
			\coordinate (r18) at (-0.78,-0.53); 
			\coordinate (r19) at (-2,-1); 
			\coordinate (r20) at (-1.44,-0.32);
			\coordinate (r21) at (-2.23,0.41);

			\coordinate (y) at (intersection of uY1--uY2 and r2--r14);
			\coordinate (q) at (0,-3.995);
			\coordinate (q1) at (-1.58,0);
			\coordinate (q2) at (intersection of xim0--xim2 and r9--r10);
			\coordinate (r1) at (intersection of xim0--xim2 and r2--r14); 
			\coordinate (r13) at (intersection of e0--ex and r2--r14); 
			
			\coordinate (r22) at ($2.5*(q1)-1.5*(r20)$); 

			\draw [violet, thick] (r14) to node[dot, swap]{$W_Y^s$} (r13);
			\draw [violet, thick] (r13)--(r2)--(r3);
			\draw [violet, thick, dashed] (r3)--(r4);
			\draw [violet, thick] (r4)--(r5)--(r6)--(r7);
			\draw [violet, thick, dashed] (r7)--(r8);
			\draw [violet, thick] (r8)--(r9)--(r10)--(r11)--(q)--(r12);
			\draw [violet, thick] (r14)--(r15)--(r16);
			\draw [violet, thick, dashed] (r16)--(r17);
			\draw [violet, thick] (r17)--(r18);
			\draw [violet, thick, dashed] (r18)--(r19);
			\draw [violet, thick] (r19)--(r20)--(q1)--(r22);

			\node[nodec, color=violet] at (r15) {};
			\node[nodec, color=violet] at (r16) {};
			\node[nodec, color=violet] at (r17) {};
			\node[nodec, color=violet] at (r4) {};
			\node[nodec, color=violet] at (r8) {};
			
			\node[nodec, color=black, label={[below, yshift=-2mm, color=black]$Y$}] at (y) {};
			\node[nodec, color=violet, label={[below left, color=violet]$T$}] at (r2) {};
			\node[nodec, color=violet, label={[below left, color=violet]$T^1$}] at (r13) {};
			\node[nodec, color=violet, label={[below, yshift=-2mm, xshift=-1mm, color=violet]$T^{2}$}] at (r1) {};
			\node[nodec, color=violet, label={[left, color=violet]$T^{-1}$}] at (r14) {};
			\node[nodec, color=violet, label={[above, color=violet]$T^{-3}$}] at (r15) {};
			\node[nodec, color=violet, label={[right, color=violet]$T^{-2}$}] at (r3) {};
			\node[nodec, color=violet, label={[right, color=violet]$T^{-2j+4}$}] at (r4) {};
			\node[nodec, color=violet, label={[above right, xshift=-2mm, color=violet]$T^{-2j+2}$}] at (r5) {};
			\node[nodec, color=violet, label={[above, color=violet]$T^{-2j}$}] at (r7) {};
			\node[nodec, color=violet, label={[below right, xshift=-1.5mm, yshift=0mm, color=violet]$T^{-2k+2}$}] at (r10) {};
			\node[nodec, color=violet, label={[right, yshift=1mm, color=violet]$T^{-2k}$}] at (r12) {};
			\node[nodec, color=violet, label={[above right, yshift=0mm, xshift=-2mm, color=violet]$T^{-2j+3}$}] at (r17) {};
			\node[nodec, color=violet, label={[below, yshift=-1mm, xshift=3mm, color=violet]$T^{-2j+1}$}] at (r18) {};
			\node[nodec, color=violet, label={[below, color=violet]$T^{-2k+3}$}] at (r19) {};
			\node[nodec, color=violet, label={[above, xshift=3mm, color=violet]$T^{-2k+1}$}] at (r22) {};
			
			\node[nodec, color=teal, label={[right, xshift=0mm, yshift=-1.5mm, color=teal]$U^{-1}$}] at (um1) {};
			
			\node[nodec, color=black, label={[right, yshift=-1mm, color=black]$H$}] at (q) {};
			\node[nodec, color=black, label={[above right, xshift=-1mm, yshift=-1mm, color=black]$H^1$}] at (q1) {};
			\node[nodec, color=black, label={[left, xshift=1mm, yshift=1mm, color=black]$H^2$}] at (q2) {};
			
			\node[dot, label=$\mathcal{J}$] (jj) at (1.5,-3.5) {};
			\node[dot, label=$L_{a,b}(\mathcal{J})$] (ljj) at (-2.5,2) {};

			\begin{pgfonlayer}{bg}
				\fill[red!10!white] (r2.center)--(r3.center)--(r4.center)--(r5.center)--(r6.center)--(r7.center)--(r8.center)--(r9.center)--(r10.center)--(r11.center)--(q.center)--cycle;
				\fill[red!10!white] (r13.center)--(r14.center)--(r15.center)--(r16.center)--(r17.center)--(r18.center)--(r19.center)--(r20.center)--(q1.center)--cycle;
			\end{pgfonlayer}	
			
			\end{tikzpicture}
	\end{center}
	\caption{Diagram illustrating step $1^{\circ}$ in the proof of Lemma \ref{lem:WsY_positive_x_01}: $W_Y^s$ does not intersect $Oy^{-}$ at points other than $T$.}
	\label{figure:prop_C_case_1}
	\end{figure}
	
	The argument has three parts. First, we locate $T^{-2j}$ inside $OTT^1$ (Claim A). Second, using the expansion transverse to $YU^{-1}$, we find the next intersection $H$ of $W_Y^{s+}$ with $Oy^{-}$. Third, we construct a bounded region $\mathcal{J}$ with $L_{a,b}^{-2}(\mathcal{J}) \subseteq \mathcal{J}$, for which we show that it contains $P$, leading to a contradiction with the dynamics of $W_X^u$.
	
\noindent\textbf{Claim A.} $T^{-2j}$ lies in the triangle $OTT^1$. \\
\textbf{Proof.} By the construction of $j$, the segment $[T^{-2j+2},T^{-2j}]^{s,Y}$ intersects $Oy^{-}$, and $[T^{-2j+3},T^{-2j+1}]^{s,Y}$ intersects $Ox^{-}$. The segment $[T^{-2j+3},T^{-2j+1}]^{s,Y}$ lies in $L_{a,b}^{-1}(\mathcal{Q}_4)$ because it is the preimage of $[T^{-2j+4},T^{-2j+2}]^{s,Y} \subset \mathcal{Q}_4$. Therefore, $[T^{-2j+3},T^{-2j+1}]^{s,Y}$ lies in the portion of $\mathcal{Q}_2 \cup \mathcal{Q}_3$ above the line $y=-ax-1$.

On the other hand, the point $T^{1}$ lies below the line $y=-ax-1$, so $[T^{-2j+3},T^{-2j+1}]^{s,Y}$ intersects $Ox^{-}$ to the right of $T^1$. As a consequence, $[T^{-2j+2},T^{-2j}]^{s,Y}$ intersects $Oy^{-}$ above $T$, which implies that $T^{-2j}$ lies in the triangle $OTT^1$. This completes the proof of Claim A. \hfill $\blacksquare$

We have that $L_{a,b}^{-1}(OTT^{1}) \cap \mathcal{Q}_3 \subset OTT^{1}$. Now, assume that $T^{-2j},T^{-2j-1},T^{-2j-n}$ all lie in $\mathcal{Q}_3$ for some $n \in \mathbb{N}$. Then, by Claim A, these points all lie in $OTT^1$. In this case, $L_{a,b}^{-1}$ acts on these points as an affine map, so for every $i=0,1,\ldots,n$, we have
	\begin{equation*}
	\dist(T^{-2j-i}, YU^{-1}) = \left(\frac{1}{|\lambda_Y^s|}\right)^{i} \dist(T^{-2j}, YU^{-1}),
	\end{equation*}
where $U^{-1}$ is the first intersection of $W_Y^u$ with $Oy$ given by \eqref{eq:U_minus1}. 

Since $1 / |\lambda_Y^s| > 1$, distances of $T^{-2j-i}$ to the straight line $YU^{-1}$ increase geometrically, so there exists $k \in \mathbb{N}$ such that $T^{-2k+3},T^{-2k+2} \in \mathcal{Q}_3$, $T^{-2k+1} \in \mathcal{Q}_2$ and $T^{-2k} \in \mathcal{Q}_4$. In other words, $[T^{-2k+2},T^{-2k}]^{s,Y}$ is the next polygonal segment on $W_Y^{s+}$ to intersect $Oy^{-}$ after $[T^{-2j+2},T^{-2j}]^{s,Y}$.

	Let $H$ be the intersection of $[T^{-2k+2},T^{-2k}]^{s,Y}$ and $Oy^{-}$. Let $\mathcal{J}$ be the polygon enclosed by the boundary $\partial \mathcal{J} = \overline{TH} \cup [T,H]^{s,Y}$.
	
\noindent\textbf{Claim B.} $L_{a,b}^{-2}(\mathcal{J}) \subseteq \mathcal{J}$. \\
\textbf{Proof.} It suffices to show that $L_{a,b}^{-2}(\partial\mathcal{J}) \subset \mathcal{J}$. We first show that $L_{a,b}^{-1}([T,H]^{s,Y}) \subset \mathcal{J}$.

	The segment $[H,T^{-2k}]^{s,Y}$ is contained in $\mathcal{J}$. In order for $W_Y^{s+}$ to escape $\mathcal{J}$, $[T^{-2l},T^{-2l-2}]^{s,Y}$ has to intersect the straight line segment $\overline{HT}$ for some $l \geqslant k$. That would imply that $L_{a,b}^{-2}([T^{-2l},T^{-2l-2}]^{s,Y}) = [T^{-2l-2},T^{-2l-4}]^{s,Y}$, a polygonal line contained in $\mathcal{J}$, intersects $\overline{T^2H^2}$, which is contained outside $\mathcal{J}$. This is a contradiction, so we conclude that $W_Y^{s+}\setminus[Y,T^{-2k}]^{s,Y}$ is contained in $\mathcal{J}$. In particular, $L_{a,b}^{-2}([T,H]^{s,Y}) \subset \mathcal{J}$.
	
	It remains to show that $L_{a,b}^{-2}(\overline{TH}) \subset \mathcal{J}$. The endpoints $T^{-2},H^{-2}$ of $L_{a,b}^{-2}(\overline{TH})$ both lie in $\mathcal{J}$, so it suffices to show that $L_{a,b}^{-2}(\overline{TH})$ does not intersect $\partial\mathcal{J}$ apart from its endpoints.
	
	Since $\overline{T^2H^2}=L_{a,b}^2(\overline{TH})$ lies outside $\mathcal{J}$, we have that $L_{a,b}^2(\overline{TH}) \cap \overline{TH} = \emptyset$. Applying $L_{a,b}^{-2}$, we obtain that $\overline{TH} \cap L_{a,b}^{-2}(\overline{TH}) = \emptyset$. On the other hand, $\overline{TH}$ does not intersect $W_Y^s$ apart from the endpoints $T,H$, so the same holds for $L_{a,b}^{-2}(\overline{TH})$. Therefore, $L_{a,b}^{-2}(\overline{TH})$ does not intersect $\partial\mathcal{J}$ apart from its endpoints, which implies $L_{a,b}^{-2}(\overline{TH}) \subset \mathcal{J}$. As a consequence, $L_{a,b}^{-2}(\partial\mathcal{J}) \subset \mathcal{J}$, which completes the proof of Claim B. \hfill $\blacksquare$
	
	By Claim B, $L_{a,b}^{-2}$ restricts to a continuous self-map of $\mathcal{J}$. Since the polygon $\mathcal{J}$ is homeomorphic to the closed unit disk, the Brouwer Fixed Point Theorem (BFPT) and Claim B imply that $\mathcal{J}$ contains a fixed point of $L_{a,b}^{-2}$. If that fixed point were $Y$, then $[T^{-2j+2},H]^{s,Y}$ would intersect $\overline{T^1T}^{s,Y}$, which is not possible since $W_Y^s$ does not self-intersect. Moreover, $X$ and $P'$, the remaining two fixed points of $L_{a,b}^{-2}$, lie in the upper half-plane. We thus conclude that $\mathcal{J}$ contains $P$. The point $P$ does not lie on $\partial\mathcal{J}$ because it does not lie on $Oy$ or $W_Y^s$. Hence, $P \in \Int\mathcal{J}$, and there exists $\varepsilon > 0$ such that $B_{\varepsilon}(P) \subset \Int\mathcal{J}$.
	
	However, the attracting periodic orbit $\{P,P'\}$ is contained in the accumulation set $\ell$ of $W_X^u$, so $B_{\varepsilon}(P) \cap W_X^u \neq \emptyset$. Therefore, $W_X^u$ intersects $\Int\mathcal{J}$. Let $A \in W_X^u \cap \Int\mathcal{J}$. Since $L_{a,b}^{-2}(\mathcal{J}) \subseteq \mathcal{J}$, we have that $A^{-2n} \in \mathcal{J}$ for every $n \in \mathbb{N}_0$. But $A \in W_X^u$, so $A^{-2n} \rightarrow X$ as $n \rightarrow \infty$. Since $\mathcal{J}$ is closed, $X \in \mathcal{J}$, which contradicts $X \notin \mathcal{J}$. Therefore, the only intersection of $W_Y^{s+}$ with $Oy$ is the point $T$. This completes the first step of the proof.
	
\noindent\emph{$2^{\circ}$ Considering even and odd backward orbits of $T$; end of proof} 

	We have shown that under the assumption that $W_Y^{s+}$ does not intersect $Ox^{+}$, the only intersection of $W_Y^{s+}$ and $Oy^{-}$ is $T$. Therefore, $W_Y^{s+}\setminus[Y,T]^{s,Y} \subset \mathcal{Q}_4$. In particular, considering the backward iterates of $T$, we have that $\mathcal{O}^{-}_{\text{even}}(T) := \{T^{-2k}\colon k \in \mathbb{N}\}$ is contained in $\mathcal{Q}_4$, and $\mathcal{O}^{-}_{\text{odd}}(T) := \{T^{-2k+1} \colon k \in \mathbb{N}\}$ is contained in $\mathcal{Q}_2$. Moreover, the set $\mathcal{O}^{-}_{\text{odd}}(T)$ is bounded since it is contained in the bounded triangular region enclosed by $Oy$, $L_{a,b}^{-1}(Oy^{-})$ and $L_{a,b}(Ox^{+})$. Since $L_{a,b}^{-1}(\mathcal{O}^{-}_{\text{odd}}(T))=\mathcal{O}^{-}_{\text{even}}(T)$, and $L_{a,b}^{-1}$ acts on $\mathcal{O}^{-}_{\text{odd}}(T)$ as an affine map, $\mathcal{O}^{-}_{\text{even}}(T)$ is bounded as well.

	In particular, the sequence $(T^{-2k})_{k \in \mathbb{N}}$ is contained in a compact set $\Cl\mathcal{O}^{-}_{\text{even}}(T)$. Hence, there exists a convergent subsequence $(T^{-2k_m})_{m \in \mathbb{N}}$ and a point $I$ such that $T^{-2k_m} \rightarrow I$ as $m \rightarrow \infty$. By definition, $I \in \omega(T,L_{a,b}^{-2})$, and consequently, $I \in \Omega(L_{a,b}^{-2})$. Since $L_{a,b}^{2}$ is a homeomorphism, it follows that $\Omega(L_{a,b}^{-2}) = \Omega(L_{a,b}^{2})$, so $I \in \Omega(L_{a,b}^{2})$.
	
	Now, by Theorem \ref{thm:L2_non_wandering}, we have that $\Omega(L_{a,b}^2) \subseteq \ell \cup \{X,Y\}$. Since $I \in \Cl\mathcal{O}^{-}_{\text{even}}(T) \subset \mathcal{Q}_4$, we see that $I$ cannot be $X$ or $Y$. Hence, $I \in \ell$. Since $I \in \omega(T,L_{a,b}^{-2})$, it follows that $\omega(T,L_{a,b}^{-2}) \cap \ell \neq \emptyset$.
	
	We now proceed as in Lemma \ref{lem:approaching_ell}. If $W_X^u$ intersects $Ox$ and $Oy$ at finitely many points, then $\ell=\{P,P'\}$, and arbitrarily large even backward iterates of $T$ enter the basin of attraction of $\{P,P'\}$. If $W_X^u$ intersects $Ox$ and $Oy$ at infinitely many points, the same argument applies for $\Int\mathcal{D}$, where $\mathcal{D}$ is the polygon enclosed by the boundary given by \eqref{eq:polygon_D_boundary}. In either case, we conclude that sufficiently large even backward iterates $T^{-2k}$ of $T$ enter the basin of attraction of $\ell$. Since $\ell$ is $L_{a,b}$-invariant and $L_{a,b}^{-1}$-invariant, its basin is $L_{a,b}$-invariant. Since $L_{a,b}^{2k}(T^{-2k}) = T$, it follows that $T$ itself belongs to the basin of attraction of $\ell$. 
	
	Thus the assumption $W_Y^{s+} \cap Ox^{+} = \emptyset$ ultimately forces $T$ to be attracted to $\ell$, that is, $\dist(T^n,\ell) \rightarrow 0$ as $n \rightarrow \infty$. This is impossible, because $T \in W_Y^s$ and therefore $T^n \rightarrow Y$ as $n \rightarrow \infty$, while $Y \not\in \ell$ (Lemma \ref{lem:X_Y_isolated_from_ell}). Therefore, the assumption $W_Y^{s+} \cap Ox^{+} = \emptyset$ is wrong, which completes the proof of the lemma.
	\end{proof}
	
	Recall that $Z \in W_X^u$ given by \eqref{eq:Z} is the first intersection of $W_X^u$ with $Ox$, starting from $X$.
	
	\begin{lem}\label{lem:WsY_positive_x_02}
	$W_Y^{s+}$ intersects $Ox^{+}$ to the right of $Z$.
	\end{lem}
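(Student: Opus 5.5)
Let $C$ denote the first intersection of $W_Y^{s+}$ with $Ox^{+}$ along $W_Y^{s+}$ starting from $Y$; it exists by Lemma \ref{lem:WsY_positive_x_01}. The plan is to argue by contradiction, assuming $C_x \leqslant Z_x$. The case $C = Z$ is excluded immediately, since $Z \in W_X^u$ and $W_X^u \cap W_Y^s = \emptyset$ by Corollary \ref{cor:WuXWsY_empty}. So we assume $0 < C_x < Z_x$ and aim for a contradiction with Corollary \ref{cor:WuXWsY_empty} or with the absence of intersections between $W_X^s$ and $W_Y^s$.

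\textbf{Step 1: locate the arc ending at $C$.} The arc $[T,C]^{s,Y}$ reaches $Ox^{+}$ from below. Its preimage under $L_{a,b}$ hits the line $x = 1 - \frac{a}{b}|y|$, or, reading it the other way, $C^1 = (1 - aC_x, bC_x)$ lies on $L_{a,b}(Ox^{+})$. I will use the geometry of the region bounded by $W_X^{u}$ near $X$. The segment $\overline{Z^{-1}Z^1}$ passes through $X$, and the segment $\overline{ZZ^2}$ together with $\overline{XZ}$ bounds, with the axes, a region containing the points of $Ox^{+}$ between $O$ and $Z$. The key observation is that the part of $Ox^{+}$ to the left of $Z$ lies on the same side of $W_X^{u+}$ as $X$'s stable direction.

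\textbf{Step 2: trap the curve and get a contradiction.} Consider the closed curve formed by $[T,C]^{s,Y}$, the axis segment $\overline{OC}$, and $\overline{OT}$; alternatively, if needed, use $[T^1,C]^{s,Y}$ closed by pieces of the axes. Since $0 < C_x < Z_x$, the half-branch $W_X^{s-}$ from $X$ down to $V$ must enter this region or cross its boundary. Recall that $V \in Oy^{-}$ lies above $T$, because $U^{-1}$ lies above $V$ and $T$ lies below $U^{-1}$ on $W_Y^s$. The branch $W_X^{s-}$ cannot cross $W_Y^s$, so it must exit through the axis pieces. Tracking the branch $W_X^{s-}$, the arc $\overline{XV}^s$ starts at $X$ above $Ox$ and ends at $V \in \overline{OT}$. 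This gives an arc of $W_X^s$ joining the two sides.

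Next consider $W_X^{u}$. The segment $\overline{XZ} \subset W_X^{u+}$ goes from $X$ to $Z$, and $Z$ is outside the region because $Z_x > C_x$. Combined with the arc $[Y,C]^{s,Y}$, this forces an intersection. Concretely, the polygonal arc $[Y,C]^{s,Y}$ together with a ray of $W_Y^{s-}$ beyond $T^1$ separates a neighborhood. Then $W_X^{u+}$ near $\overline{ZZ^2}$, which descends from $Z$ into $\mathcal{Q}_4$ to $Z^2$ on $y = a|x| - 1$ below $Ox$, must cross $[T,C]^{s,Y}$, because $Z^2$ lies below and to the left of the arc while $Z$ lies to the right of $C$. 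Such a crossing contradicts Corollary \ref{cor:WuXWsY_empty}. To make this rigorous I would compute that $Z^2$ lies in the polygon bounded by $\overline{TC}$-side pieces, using the explicit coordinates of $Z$ and $T$. Where direct computation is awkward, I would instead use that $L_{a,b}^2$ maps $\overline{OC}$ into a segment meeting $\overline{ZZ^2}$, and invoke the Jordan curve theorem.

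\textbf{Main obstacle.} The hard part is Step 2: establishing that the arc $[T,C]^{s,Y}$, which may zigzag through $\mathcal{Q}_3$, $\mathcal{Q}_2$ and $\mathcal{Q}_4$ before hitting $Ox^{+}$, actually separates $Z$ from $Z^2$ (or from $X$). I would first try a Jordan curve argument. Close $[Y,C]^{s,Y}$ with $\overline{CZ}$ and with the line $YU^{-1}$ on $W_Y^u$, and use that $W_X^u$ is connected and avoids $W_Y^s$. Then $X$ and the accumulation set $\ell$ containing $P$ must lie in the same complementary component. The contradiction comes from $P$ lying above $T$ (Lemma \ref{lem:P_above_T}) but being cut off from $X$ by $[T,C]^{s,Y}$ when $C_x < Z_x$.
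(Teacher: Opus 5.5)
Your proposal has a genuine gap: neither mechanism in Step~2 actually produces a contradiction from $C_x<Z_x$. The claim that $\overline{ZZ^2}$ must cross $[T,C]^{s,Y}$ rests on the assertion that $Z^2$ lies in the region bounded by $[T,C]^{s,Y}$, $\overline{OC}$ and $\overline{OT}$, and you give no argument for this. It also cannot be settled by explicit coordinates, since $[T,C]^{s,Y}$ has vertices $T^{-2},\ldots,T^{-2j+2}$ for an unknown $j$, with no closed form.

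Several auxiliary facts are also off:
\begin{itemize}
\item $Z^2$ does not lie on $y=a|x|-1$; that is $Z^{-2}$, because $Z^{-1}\in Oy$.
\item $\overline{Z^{-1}Z^1}^{u}$ does not contain $X$. The straight piece of $W_X^u$ through $X$ is $\overline{ZZ^1}^{u}$, and $Z^{-1}$ lies between $X$ and $Z^1$.
\item Your reason for $V_y>T_y$ is a non sequitur, although the inequality holds because $T_y<-1<V_y$.
\end{itemize}
You also never exclude that $[T,C]^{s,Y}$ passes through $\mathcal{Q}_3$. The paper rules this out first, reusing the polygon $\mathcal{J}$ from step $1^{\circ}$ of Lemma~\ref{lem:WsY_positive_x_01}. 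This guarantees that the arc runs in $\mathcal{Q}_4$ through $T,T^{-2},\ldots,T^{-2j+2}$, while the odd iterates lie in $\mathcal{Q}_2$.

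The fallback Jordan-curve argument fails for a structural reason. Your barriers are closed up by pieces of the coordinate axes ($\overline{CZ}$ in the fallback, $\overline{OC}\cup\overline{OT}$ in Step~2), and $W_X^u$ crosses the axes freely, at $Z$ and possibly at infinitely many further points. So connectedness of $W_X^u$ together with $W_X^u\cap W_Y^s=\emptyset$ does not force $X$ and $P$ into the same complementary component. Besides, the curve you describe does not close up, and the claim that $P$ is cut off from $X$ is only asserted.

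The missing idea is to replace a topological barrier by a dynamical one, in three steps.
\begin{itemize}
\item The paper takes the polygon $\mathcal{K}$ with $\partial\mathcal{K}=[C^{-1},C]^{s,Y}\cup\overline{CZ}\cup\overline{ZZ^{-1}}^{u}\cup\overline{Z^{-1}C^{-1}}$ and proves $L_{a,b}^{-1}(\mathcal{K})\subseteq\mathcal{K}$. The axis pieces are harmless because $L_{a,b}^{-1}(\overline{CZ})=\overline{C^{-1}Z^{-1}}$, while $L_{a,b}^{-1}(\overline{C^{-1}Z^{-1}})$ lies on $\alpha=\overline{T^{-1}O^{-1}}\cup\overline{O^{-1}Z^{-2}}\subset L_{a,b}^{-1}(Oy)$, which is shown to lie in $\mathcal{K}$.
\item It then shows $P\in\Int\mathcal{K}$, and this is where $P_y>T_y$ genuinely enters. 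If $P\notin\mathcal{K}$, take $P_1\in[T,C]^{s,Y}$ closest to $P$ and iterate $\overline{PP_1}$ under $L_{a,b}^2$ (affine along the way, fixing $P$) until $P_1^{2n}\in\overline{TT^2}^{s,Y}$. Then $\overline{PP_1^{2n}}$ crosses $Oy^{-}$ above $T$, hence meets $[T,C]^{s,Y}$, although it meets $W_Y^s$ only at $P_1^{2n}$.
\item Finally, since $P\in\ell$, some point of $W_X^u$ lies in $\Int\mathcal{K}$. Its entire backward orbit stays in $\Int\mathcal{K}$, yet it must pass through the fundamental domain $\overline{ZZ^{-1}}^{u}\subset\partial\mathcal{K}$, a contradiction.
\end{itemize}
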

	
	\begin{proof}
	In Lemma \ref{lem:WsY_positive_x_01}, we have proved that $W_Y^{s+}$ intersects $Ox^{+}$. Therefore, there exists $j \in \mathbb{N}$ such that $[T^{-2j + 2},T^{-2j}]^{s,Y} \cap Ox^{+} \neq \emptyset$. For the smallest such $j$, odd backward iterates $T^{-1},T^{-3},\ldots,T^{-2j + 1}$ lie in $\mathcal{Q}_2$, even backward iterates $T^{-2},\ldots,T^{-2j + 2}$ lie in $\mathcal{Q}_4$, and $T^{-2j}$ lies in $\mathcal{Q}_1$. Indeed, if a backward iterate $T^{-i}$ lied in $\mathcal{Q}_3$ for some $i \in \{0,1,\ldots,2j-1\}$, we could construct a polygon $\mathcal{J}$ and obtain a contradiction as in the proof of Lemma \ref{lem:WsY_positive_x_01}, step $1^{\circ}$.
	
	In addition, for every $i \in \{0,1,\ldots,2j - 2\}$, we have that $[T^{-i},T^{-i - 2}]^{s,Y} = \overline{T^{-i}T^{-i - 2}}^{s,Y}$. Let $C$ denote the intersection of $[T^{-2j + 2},T^{-2j}]^{s,Y}=\overline{T^{-2j + 2}T^{-2j}}^{s,Y}$ with $Ox^{+}$. By contradiction, assume that $C$ lies to the left of $Z$.
	
	We define a polygon $\mathcal{K}$ bounded by portions of $W_Y^s$, $W_X^u$, and the coordinate axes. We will show first that the relevant portion of the preimage of $Oy$ lies in $\mathcal{K}$, then that $\mathcal{K}$ is $L_{a,b}^{-1}$-invariant, and finally that $P \in \mathcal{K}$. The last fact contradicts the geometry of $W_X^u$. 
	
	Let $\mathcal{K}$ be the polygon enclosed by the boundary
	\begin{equation*}
	\partial\mathcal{K} := [C^{-1},C]^{s,Y} \cup \overline{CZ} \cup \overline{ZZ^{-1}}^{u} \cup \overline{Z^{-1}C^{-1}},
	\end{equation*}

see Figure \ref{figure:prop_C_right}. Moreover, let $\alpha := \overline{T^{-1}O^{-1}} \cup \overline{O^{-1}Z^{-2}} \subset L_{a,b}^{-1}(Oy)$.

	\begin{figure}[!ht]
	\begin{center}
	\begin{tikzpicture}[auto, scale=1]
			\tikzstyle{nodec}=[draw,circle,fill=black,minimum size=2pt,
			inner sep=0pt, label distance=2mm]
			\tikzstyle{nodeq}=[draw,circle,fill=black,minimum size=5pt,
			inner sep=0pt, label distance=2mm]
			\tikzstyle{nodeh}=[draw,circle,fill=white,minimum size=4pt,
			inner sep=0pt]
			\tikzstyle{dot}=[circle,draw=none,fill=none,minimum size=0pt,inner sep=2pt, outer sep=-1pt]

			\draw[->] (-5,0)--(6.5,0) node [below]{$x$};
			\draw[->] (0,-6)--(0,4.5) node [left]{$y$};
			\node[label={[xshift=-0.2cm, yshift=-0.7cm]$0$}] at (0,0) {};
			
			\coordinate (e0) at (0,0);
			\coordinate (ex) at (1,0);
			\coordinate (ey) at (0,1);
			
			\coordinate (yend1) at (5,4);
			\coordinate (yend2) at (-5,4);
			\coordinate (lm1O) at (0,-3);
			\node[nodec, color=black, label={[below right, color=black, xshift=-0.5mm, yshift=1mm]$O^{-1}$}] at (lm1O) {};
			
			\draw[thick, dashed] (yend2)--(lm1O)--(yend1) node[right]{$y=a|x|-1$};
			
			
			\coordinate (z) at (6,0);
			\coordinate (zm1) at (0,3.48);
			\coordinate (zm2) at (intersection of yend1--lm1O and z--zm1);
			\coordinate (x) at ($(z)!0.667!(zm1)$);
			
			\draw[blue, thick] (z) to node[dot, swap]{$W_X^u$} (zm2);
			\draw[blue, thick] (zm2)--(zm1);
			\node[nodec, color=blue, label={[above, color=blue]$Z$}] at (z) {};
			\node[nodec, color=blue, label={[above right, color=blue, xshift=-1mm, yshift=-1mm]$Z^{-1}$}] at (zm1) {};
			\node[nodec, color=blue, label={[right, color=blue, yshift=1mm]$Z^{-2}$}] at (zm2) {};
			\node[nodec, color=black, label={[above, color=black]$X$}] at (x) {};
			
			
			\coordinate (a1) at (0.96,2.282); 
			\coordinate (a2) at (0,2); 
			\coordinate (a3) at (-0.92,2.722); 
			\coordinate (a4) at (-1.78,3.122); 
			\coordinate (a5) at (-3,3); 
			\coordinate (a6) at ($(lm1O)!0.74!(yend2)$); 
			\coordinate (a7) at (0,-5.44); 
			\coordinate (a8) at (2.7,-4.738); 
			\coordinate (a9) at (4.26,-3.638);
			\coordinate (a10) at (4.86,-2.398);
			\coordinate (a11) at (4.98,-1.398); 
			\coordinate (a12) at (4.82,-0.478); 
			\coordinate (a13) at (3.98,0.482); 
			\coordinate (a14) at ($(lm1O)!0.5!(yend1)$); 
			\coordinate (a15) at (2.42,1.282); 
			
			\draw[violet, thick] (a1)--(a2)--(a3)--(a4);
			\draw[violet, thick, dashed] (a4)--(a5);
			\draw[violet, thick] (a5)--(a6)--(a7);
			\draw[violet, thick] (a7) to node[dot, swap]{$W_Y^s$} (a8);
			\draw[violet, thick, dashed] (a8)--(a9);
			\draw[violet, thick] (a9)--(a10);
			\draw[violet, thick, dashed] (a10)--(a11);
			\draw[violet, thick] (a11)--(a12)--(a13)--(a14)--(a15);
			
			\coordinate (t1) at (intersection of a6--a7 and e0--ex); 
			\coordinate (y) at ($(a7)!0.466!(a6)$); 
			\coordinate (p12n) at ($(a7)!0.11!(a6)$); 
			
			\coordinate (p) at (5.4,-3.778); 
			\coordinate (p1) at ($(a9)!(p)!(a10)$); 
			
			\coordinate (c) at (intersection of a12--a13 and e0--ex); 
			
			\node[nodec, color=violet, label={[below, color=violet, xshift=2mm]$T^{-2j-1}$}] at (a1) {};
			\node[nodec, color=black, label={[below left, color=black, xshift=0.5mm, yshift=0.5mm]$C^{-1}$}] at (a2) {};
			\node[nodec, color=violet, label={[above, color=violet, xshift=2.5mm]$T^{-2j+1}$}] at (a3) {};
			\node[nodec, color=violet, label={[above, color=violet]$T^{-2j+3}$}] at (a4) {};
			\node[nodec, color=violet, label={[above, color=violet]$T^{-3}$}] at (a5) {};
			\node[nodec, color=violet, label={[left, color=violet]$T^{-1}$}] at (a6) {};
			\node[nodec, color=violet, label={[below left, color=violet]$T$}] at (a7) {};
			\node[nodec, color=violet, label={[below right, color=violet, xshift=-1mm, yshift=1mm]$T^{-2}$}] at (a8) {};
			\node[nodec, color=violet, label={[right, color=violet]$T^{-2j+4}$}] at (a11) {};
			\node[nodec, color=violet, label={[right, color=violet]$T^{-2j+2}$}] at (a12) {};
			\node[nodec, color=violet, label={[above left, color=violet, xshift=3mm]$T^{-2j}$}] at (a13) {};
			\node[nodec, color=black, label={[below, color=black, xshift=2.5mm]$C^{-2}$}] at (a14) {};
			\node[nodec, color=violet, label={[left, color=violet]$T^{-2j-2}$}] at (a15) {};
			
			\node[nodec, color=black, label={[below left, color=black, xshift=1mm]$C$}] at (c) {};
			\node[nodec, color=black, label={[left, color=black]$Y$}] at (y) {};
			\node[nodec, color=violet, label={[below left, color=violet]$T^{1}$}] at (t1) {};
			
			\node[nodec, color=black, label={[right, color=black]$P$}] at (p) {};
			\node[nodec, color=black, label={[right, color=black, yshift=0.5mm]$P_1$}] at (p1) {};
			\node[nodec, color=black, label={[left, color=black]$P_1^{2n}$}] at (p12n) {};
			
			\draw[black, thick] (p)--(p1);
			\draw[black, thick, dashed] (p)--(p12n);
			
			\node[dot, label=$\mathcal{K}$] (kk) at (-1.8,0.5) {};

			\begin{pgfonlayer}{bg}
				\fill[orange!10!white] (zm1.center)--(a2.center)--(a3.center)--(a4.center)--(a5.center)--(a6.center)--(a7.center)--(a8.center)--(a9.center)--(a10.center)--(a11.center)--(a12.center)--(c.center)--(z.center)--cycle;
			\end{pgfonlayer}
			
			\end{tikzpicture}
	\end{center}
	\caption{Lemma \ref{lem:WsY_positive_x_02}: $C$, the intersection of $W_Y^{s+}$ with $Ox^{+}$, lies to the right of $Z$.}
	\label{figure:prop_C_right}
	\end{figure}
	
\noindent\textbf{Claim A.} $\alpha \subset \mathcal{K}$.\\
\textbf{Proof.} We claim that $\alpha \cap \partial\mathcal{K} = \{T^{-1},Z^{-2}\}$. Indeed, $\alpha \cap \overline{ZZ^{-1}}^u = \{Z^{-2}\}$ and $\alpha \cap \overline{TT^{-1}}^{s,Y} = \{T^{-1}\}$. 

	We now show that $\alpha \cap [T,C]^{s,Y} = \emptyset$. By construction, points $T,T^{-2},\ldots,T^{-2j + 2},C$ all lie in $L_{a,b}^{-1}(\mathcal{Q}_2)$, which is a convex set. Therefore, $[T,C]^{s,Y} \subset L_{a,b}^{-1}(\mathcal{Q}_2)$, so $[T,C]^{s,Y} \cap L_{a,b}^{-1}(Oy^{-}) = \emptyset$. Moreover, since $[T^2,T^{-2j + 2}]^{s,Y} \cap Ox^{+} = \emptyset$, it follows that
	\begin{equation*}
	L_{a,b}^{-2}([T^2,T^{-2j + 2}]^{s,Y}) \cap L_{a,b}^{-2}(Ox^{+}) = [T,T^{-2j}]^{s,Y} \cap L_{a,b}^{-2}(Ox^{+}) = \emptyset.
	\end{equation*}
Since $[T,C]^{s,Y} \subset [T,T^{-2j}]^{s,Y}$ and $L_{a,b}^{-1}(Oy^{+}) \subset L_{a,b}^{-2}(Ox^{+})$, we see that $[T,C]^{s,Y} \cap L_{a,b}^{-1}(Oy^{+}) = \emptyset$. Therefore, $[T,C]^{s,Y}$ does not intersect $L_{a,b}^{-1}(Oy)$, so $[T,C]^{s,Y}$ does not intersect $\alpha$ either.
	
	We similarly obtain $\alpha \cap [T^{-1},C^{-1}]^{s,Y} = \{T^{-1}\}$ because $[T^{-1},C^{-1}]^{s,Y}$ is a polygonal segment contained in $\mathcal{Q}_2 \cap L_{a,b}^{-1}(\mathcal{Q}_4)$.
	
	Since $\overline{CZ} \subset Ox^{+} \cap L_{a,b}^{-1}(\mathcal{Q}_2)$ and $\overline{C^{-1}Z^{-1}} \subset Oy^{+}$, we also have that $\overline{CZ} \cap L_{a,b}^{-1}(Oy) = \overline{C^{-1}Z^{-1}} \cap L_{a,b}^{-1}(Oy) = \emptyset$. As a consequence, $\overline{CZ} \cap \alpha = \overline{C^{-1}Z^{-1}} \cap \alpha = \emptyset$, which proves that $\alpha \cap \partial\mathcal{K} = \{T^{-1},Z^{-2}\}$.
	
	Finally, $O^{-1} \in \mathcal{K}$ because $O^{-1} \in Oy$ and $O^{-1}_y > T_y$. Since $\alpha$ does not intersect $\partial\mathcal{K}$ apart from its endpoints, it follows that $\alpha \subset \mathcal{K}$. This completes the proof of Claim A. \hfill $\blacksquare$ 
	
\noindent\textbf{Claim B.} $L_{a,b}^{-1}(\mathcal{K}) \subseteq \mathcal{K}$.\\
\textbf{Proof.} It suffices to show that $L_{a,b}^{-1}(\partial\mathcal{K}) \subset \mathcal{K}$. We have that $L_{a,b}^{-1}(\overline{ZZ^{-1}}^{u})=\overline{Z^{-2}Z^{-1}}^{u} \subset \overline{ZZ^{-1}}^{u}$ and $L_{a,b}^{-1}(\overline{CZ}) = \overline{C^{-1}Z^{-1}}$, so $L_{a,b}^{-1}(\overline{ZZ^{-1}}^{u})$ and $L_{a,b}^{-1}(\overline{CZ})$ are both contained in $\mathcal{K}$.

	We now show that $L_{a,b}^{-1}(\overline{C^{-1}Z^{-1}}) \subset \mathcal{K}$. The segment $\overline{C^{-1}Z^{-1}}$ lies on $\overline{Z^{-1}T} \subset Oy$, so the preimage $L_{a,b}^{-1}(\overline{C^{-1}Z^{-1}})$ lies on $\alpha$. Therefore, Claim A implies that $L_{a,b}^{-1}(\overline{C^{-1}Z^{-1}}) \subset \mathcal{K}$.

	It remains to show that $L_{a,b}^{-1}([C^{-1},C]^{s,Y}) \subset \mathcal{K}$. Since
	\begin{equation*}
	L_{a,b}^{-1}([C^{-1},C]^{s,Y}) = [C^{-1},C^{-2}]^{s,Y} = [C^{-1},C]^{s,Y} \cup [C,C^{-2}]^{s,Y},
	\end{equation*}
and $[C^{-1},C]^{s,Y} \subset \partial\mathcal{K}$, we need to show that $[C,C^{-2}]^{s,Y} \subset \mathcal{K}$.
	
	The segment $[C,C^{-2}]^{s,Y}$ does not intersect $[C^{-1},C]^{s,Y}$ apart from $C$ since $W_Y^s$ does not self-intersect, and it does not intersect $\overline{ZZ^{-1}}^u \subset W_X^u$ due to Corollary \ref{cor:WuXWsY_empty}. Moreover, $[C,C^{-2}]^{s,Y} \subset L_{a,b}^{-1}(\mathcal{Q}_2)$, so that segment does not intersect $\overline{C^{-1}Z^{-1}} \subset Oy^{+}$ either. It remains to consider the intersection of $[C,C^{-2}]^{s,Y}$ and $\overline{CZ}$.
	
	We know that $[C,C^{-2}]^{s,Y} = \overline{CT^{-2j}}^{s,Y} \cup \overline{T^{-2j}C^{-2}}^{s,Y}$. Since $T^{-2j}$ lies in $\mathcal{Q}_1$ inside the triangle $OZZ^{-1} \subset \mathcal{K}$, we have that $\overline{CT^{-2j}}^{s,Y} \subset \mathcal{K}$. Furthermore, if $\overline{T^{-2j}C^{-2}}^{s,Y}$ intersected $\overline{CZ}$, then, starting from $T^{-2j} \in \Int\mathcal{K}$, the segment would cross the boundary $\partial\mathcal{K}$ through $\overline{CZ}$. Since it cannot cross the other boundary component $[T,C]^{s,Y}$, its endpoint $C^{-2}$ would have to lie outside $\mathcal{K}$. But $C^{-2} \in \alpha \subset \mathcal{K}$, a contradiction. Therefore, $\overline{T^{-2j}C^{-2}}^{s,Y}$ does not intersect $\overline{CZ}$, so $\overline{T^{-2j}C^{-2}}^{s,Y} \subset \mathcal{K}$. 
	
	We conclude that $[C,C^{-2}]^{s,Y} \subset \mathcal{K}$, from which it follows that $L_{a,b}^{-1}([C^{-1},C]^{s,Y}) \subset \mathcal{K}$. This completes the proof of Claim B. \hfill $\blacksquare$
	
	Since $\overline{YT}^{s,Y} \subset \mathcal{K}$, Claim B implies that $W_Y^s \subset \mathcal{K}$.
	
\noindent\textbf{Claim C.} $\{P,P'\} \subset \mathcal{K}$.\\
\textbf{Proof.} Assume by contradiction that $P$ does not lie in $\mathcal{K}$. Let $\mathcal{M}$ be the polygon with boundary $\partial\mathcal{M} := [T,C]^{s,Y} \cup \overline{CO} \cup \overline{OT}$; in other words, $\mathcal{M} = \mathcal{K} \cap \mathcal{Q}_4$. Then $P$ lies in $\mathcal{Q}_4$ outside $\mathcal{M}$.

	Since $[T,C]^{s,Y}$ is compact, there exists a point $P_1 \in [T,C]^{s,Y}$ minimizing the distance to $P$, that is,
	\begin{equation*}
	\dist(P,P_1) = \min\{\dist(P,A) \colon A \in [T,C]^{s,Y}\}.
	\end{equation*}
By construction, $\overline{PP_1} \cap [T,C]^{s,Y} = \{P_1\}$. Since $P \notin \mathcal{M}$, the segment $\overline{PP_1}$ does not intersect $\mathcal{M}$ apart from $P_1$. Moreover, since $W_Y^s \cap \mathcal{Q}_4$ is contained in $\mathcal{M}$, it follows that $\overline{PP_1} \cap W_Y^s = \{P_1\}$.

	There exists $n \in \mathbb{N}$ such that $P_1^{2n} \in \overline{TT^2}^{s,Y} \setminus \{T\}$. In that case, $\overline{PP_1}$, $L_{a,b}^{2}(\overline{PP_1}) = \overline{PP_1^2}$, $\ldots$, $L_{a,b}^{2n-2}(\overline{PP_1}) = \overline{PP_1^{2n-2}}$ are all straight line segments in $\mathcal{Q}_4$ since $L_{a,b}^2$ acts on them as an affine map. Therefore, $L_{a,b}^{2n}(\overline{PP_1}) = \overline{PP_1^{2n}}$ is also a straight line segment. 
	
	We claim that $\overline{PP_1^{2n}}$ intersects $[T,C]^{s,Y}$. Indeed, $P_1^{2n} \in \mathcal{Q}_3$ lies on the segment $\overline{T^1T}^{s,Y}$, which has a negative slope, and therefore, $P_{1,y}^{2n} > T_y$. Moreover, $P \in \mathcal{Q}_4$ and $P_{y} > T_{y}$ (Lemma \ref{lem:P_above_T}), so $\overline{PP_1^{2n}}$ intersects $Oy^{-}$ above $T$. In particular, the intersection point of $\overline{PP_1^{2n}}$ and $Oy^{-}$ is an interior point of $\mathcal{K}$, so $\overline{PP_1^{2n}} \cap \Int\mathcal{K} \neq \emptyset$. Since $P$ is an endpoint of $\overline{PP_1^{2n}}$ that lies in $\mathcal{Q}_4$ outside $\mathcal{K}$, that segment intersects $\partial\mathcal{K} \cap \mathcal{Q}_4$, which is exactly $[T,C]^{s,Y}$.
	
	 On the other hand, the only intersection of $\overline{PP_1^{2n}}$ and $W_Y^s$ is $P_1^{2n}$, which contradicts $\overline{PP_1^{2n}} \cap [T,C]^{s,Y} = \emptyset$. Therefore, $P \in \mathcal{K}$. Since $P'=L_{a,b}^{-1}(P)$ and $\mathcal{K}$ is $L_{a,b}^{-1}$-invariant due to Claim B, we have that $P' \in \mathcal{K}$ as well. This proves Claim C. \hfill $\blacksquare$
	
	Claim C implies that $P \in \mathcal{K}$. Since $P$ does not lie on either of $Ox$, $Oy$, $W_X^u$, $W_Y^s$, we have that $P \in \Int\mathcal{K}$. Therefore, there exists $\varepsilon > 0$ such that $B_{\varepsilon}(P) \subset \Int\mathcal{K}$. Since $P \in \ell$, we have that $W_X^u \cap B_{\varepsilon}(P) \neq \emptyset$, hence $W_X^u \cap \Int\mathcal{K} \neq \emptyset$. Let $A \in W_X^u \cap \Int\mathcal{K}$. The polygon $\mathcal{K}$ is $L_{a,b}^{-1}$-invariant due to Claim B, and hence so is $\Int\mathcal{K}$, thus $A^{-n} \in \Int\mathcal{K}$ for every $n \in \mathbb{N}$. On the other hand, $A \in W_X^u$, so there exists $n_1 \in \mathbb{N}$ such that $A^{-n_1} \in \overline{ZZ^{-1}}^u$. This yields a contradiction since $\overline{ZZ^{-1}}^u \subset \partial\mathcal{K}$, hence $\overline{ZZ^{-1}}^{u} \cap \Int\mathcal{K} = \emptyset$. Therefore, the original assumption that $C$ lies to the left of $Z$ is false, which completes the proof.
	\end{proof} 
	
	We have proved that $W_Y^s$ intersects $Ox^{+}$ at a point that lies to the right of $Z$. We denote that intersection point by $C$.
	
	\begin{rem} \label{rem:first_tangency_point}
	Point $C$ corresponds to the point of first tangency introduced in \cite{ishii1997towards1}. This point was used in \cite{baptista2009basin} for the construction of the basin of the strange attractor for parameters in the Misiurewicz set. We remark that for parameters considered in \cite{baptista2009basin} and \cite{ishii1997towards1}, $C$ is the intersection of $Ox^{+}$ and $\overline{TT^{-2}}^{s,Y}$. For parameters in our set $\mathfrak{R}$, however, $C$ is generally given by $C=Ox^{+} \cap \overline{T^{-2j+2}T^{-2j}}^{s,Y}$ for some $j \in \mathbb{N}_0$, so the proof of the existence of $C$ and its position relative to $Z$ required a detailed geometric analysis given in Lemmas \ref{lem:WsY_positive_x_01} and \ref{lem:WsY_positive_x_02}. 
	\end{rem}
	
	 We now use the position of $C$ to identify a segment of $W_Y^s$ whose entire backward orbit remains in $\mathcal{Q}_1$. The goal is to show that these iterates approach the stable half-line $W_X^{s+}$. This will allow us to understand how $W_Y^s$ separates the plane.
 
 There exists $m_0\in\mathbb{N}$ such that $C$ lies on $[T^{-m_0+2},T^{-m_0}]^{s,Y}=\overline{T^{-m_0+2}T^{-m_0}}^{s,Y}$ (moreover, $m_0$ is even) and $\overline{CT^{-m_0}}^{s,Y}$ is contained in the first quadrant $\mathcal{Q}_1$; see Figure \ref{figure:cor_cvg_Ws+X}. As a consequence, $C^{-1}$ lies on $Oy^{+}$, and $L_{a,b}^{-1}(\overline{CT^{-m_0}}^{s,Y})=\overline{C^{-1}T^{-m_0-1}}^{s,Y}$ is a line segment lying in $L_{a,b}^{-1}(\mathcal{Q}_1)$ above the line segment $\overline{ZZ^{-1}}^{u}$. Hence, $\overline{C^{-1}T^{-m_0-1}}^{s,Y}$ is again fully contained in $\mathcal{Q}_1$. Inductively, we see that the same holds for all further preimages of that line segment and since $L_{a,b}^{-1}$ acts as an affine map in the upper half-plane, $L_{a,b}^{-n}(\overline{CT^{-m_0}}^{s,Y})=\overline{C^{-n}T^{-m_0-n}}^{s,Y}$ is a straight line segment for every $n\in\mathbb{N}$. The remaining part of the argument is to understand the backward iterates of the arc $\overline{CT^{-m_0}}^{s,Y}$. These iterates form the upper part of the separator and they approach $W_X^{s+}$.
	
	Recall that $W_X^{s+}$ denotes the half of $W_X^s$ which is a half-line in $\mathcal{Q}_1$ starting at $X$ and going to infinity. Note that $W_X^{s+}$ does not intersect $W_Y^s$. Using the same techniques as in Lemma \ref{lem:lozi_recurr_coef}, one can prove the following result from which it will follow that $W_Y^s$ approaches $W_X^{s+}$ in $\mathcal{Q}_1$. 
	
	\begin{lem}\label{lem:slope_cvg_upper}
	\begin{enumerate}
	\item Let $\alpha$ be a line segment in the upper half-plane which lies on a straight line whose slope equals $s_1$. Then $L_{a,b}^{-1}(\alpha)$ lies on a straight line whose slope equals
		$$s_2=b\cdot\frac{1}{s_1}+a.$$
	\item The sequence $(s_n)_{n\in\mathbb{N}_0}$ given by the recurrence
	\begin{equation}\label{eg:recurr2}
	s_{n+1}=b\cdot\frac{1}{s_n}+a,\quad n\geqslant0; \quad s_0\neq\tfrac{1}{2}\left(a-\sqrt{a^2+4b}\right),
	\end{equation}
	converges and
	$$\lim_{n\rightarrow\infty}s_n=\tfrac{1}{2}\left(a+\sqrt{a^2+4b}\right).$$
	\end{enumerate}
	\end{lem}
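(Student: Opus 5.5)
For part (1), we compute the inverse map explicitly on the upper half-plane. From $L_{a,b}(x,y)=(1+y-a|x|,bx)$ we get $L_{a,b}^{-1}(u,v)=\bigl(\tfrac{v}{b},\,u-1+\tfrac{a}{b}|v|\bigr)$. For $v\geqslant 0$ this is the affine map
\begin{equation*}
(u,v)\mapsto\bigl(\tfrac{v}{b},\,u-1+\tfrac{a}{b}v\bigr).
\end{equation*}
Its linear part sends a direction vector $(1,s_1)$ to $\bigl(\tfrac{s_1}{b},\,1+\tfrac{a}{b}s_1\bigr)$. Provided $s_1\neq 0$, the resulting slope is
\begin{equation*}
\frac{1+\tfrac{a}{b}s_1}{s_1/b}=\frac{b}{s_1}+a .
\end{equation*}
Since an affine map sends a segment lying on a line into a single line, $L_{a,b}^{-1}(\alpha)$ lies on a line of slope $s_2$. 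We should remark on the degenerate cases. If $s_1=0$, the image is vertical, which we read as slope $\infty$, consistent with $b/0+a$. If $\alpha$ is vertical, the image has slope $a$, consistent with $b/\infty+a$. Working on the projective line $\mathbb{R}\cup\{\infty\}$ handles both uniformly.

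For part (2), we view the recurrence as iteration of the Möbius map $g(s)=\tfrac{b}{s}+a=\tfrac{as+b}{s}$ on $\mathbb{R}\cup\{\infty\}$. Its fixed points solve $s^2-as-b=0$, namely
\begin{equation*}
\sigma_+=\tfrac12\bigl(a+\sqrt{a^2+4b}\bigr)>0,\qquad \sigma_-=\tfrac12\bigl(a-\sqrt{a^2+4b}\bigr)<0.
\end{equation*}
We have $g'(s)=-b/s^2$, so $|g'(\sigma_+)|=b/\sigma_+^2<1$ and $|g'(\sigma_-)|=b/\sigma_-^2>1$, the latter because $\sigma_-^2=-b\,\sigma_-/\sigma_+\cdot\ldots$; more cleanly, $\sigma_+\sigma_-=-b$ gives $b/\sigma_\pm^2=|\sigma_\mp/\sigma_\pm|$, and $|\sigma_-|<\sigma_+$ since $a>0$. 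Thus $g$ is a hyperbolic (loxodromic-type real) Möbius transformation, with attracting fixed point $\sigma_+$ and repelling fixed point $\sigma_-$.

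To make convergence explicit, we use the cross-ratio coordinate $w=\tfrac{s-\sigma_+}{s-\sigma_-}$. A direct computation using $\sigma_\pm^2=a\sigma_\pm+b$ gives
\begin{equation*}
\frac{g(s)-\sigma_+}{g(s)-\sigma_-}=\frac{b/s-b/\sigma_+}{b/s-b/\sigma_-}=\frac{\sigma_-}{\sigma_+}\cdot\frac{s-\sigma_+}{s-\sigma_-}\cdot\frac{\sigma_+\,\sigma_-^{-1}}{\ldots}.
\end{equation*}
Simplifying, $b/s-b/\sigma=b(\sigma-s)/(s\sigma)$, so the ratio equals $\tfrac{\sigma_-}{\sigma_+}\, w$. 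Hence $w_{n+1}=\kappa\, w_n$ with $\kappa=\sigma_-/\sigma_+\in(-1,0)$.

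The hypothesis $s_0\neq\sigma_-$ means $w_0\neq\infty$, so $w_n=\kappa^n w_0\to 0$, which is equivalent to $s_n\to\sigma_+$. Along the way, some $s_n$ may equal $0$ or $\infty$. These correspond to finite values of $w$ and cause no trouble in the projective formulation; we note this for the later application, where the segments are never vertical anyway.

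The only mildly delicate step is the bookkeeping of these degenerate slopes ($0$, $\infty$) and confirming $|\kappa|<1$, which follows from $a>0$. Everything else is direct computation mirroring Lemma \ref{lem:lozi_recurr_coef}.
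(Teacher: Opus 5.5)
Your proof is correct. Part (1) is the same affine computation of $L_{a,b}^{-1}$ that the paper uses to derive the recurrence in Lemma \ref{lem:lozi_recurr_coef}, now with $|v|=v$.

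For part (2) the paper writes out no argument. It only points to the techniques of Lemma \ref{lem:lozi_recurr_coef}, i.e.\ a direct analysis of the scalar recurrence, which there also produces the sign pattern (1)--(3) needed for the zigzag structure. You instead conjugate the M\"obius map $g(s)=b/s+a$ on $\mathbb{R}\cup\{\infty\}$ to the linear map $w\mapsto\kappa w$, with $\kappa=\sigma_-/\sigma_+\in(-1,0)$. This buys several things:
\begin{enumerate}
\item a closed form $w_n=\kappa^n w_0$ and a geometric rate $|\sigma_-|/\sigma_+$;
\item a uniform treatment of slopes $0$ and $\infty$, where the real recurrence is undefined;
\item a transparent reason why exactly $s_0=\sigma_-$ must be excluded: it is the repelling fixed point, i.e.\ the slope $b/\lambda_X^u$ of the unstable direction at $X$.
\end{enumerate}
Only the limit is used later (Lemma \ref{lem:y_preim_cvg_WXs+}, Corollary \ref{cor:CR_0_preim_cvg_WXs+}), so nothing is lost by not tracking signs.

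Your argument also unifies the two slope lemmas. Setting $t_n=-s_n$ turns the recurrence of Lemma \ref{lem:lozi_recurr_coef} into $t_{n+1}=b/t_n+a$ with $t_0=-\sigma_+\neq\sigma_-$, so the same argument reproves Lemma \ref{lem:lozi_recurr_coef}(4).

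Finally, tidy the write-up. Two intermediate expressions are garbled: the display with the stray factor $\frac{\sigma_+\sigma_-^{-1}}{\ldots}$, and the trailing ``$\cdot\ldots$'' in the estimate of $|g'(\sigma_-)|$. Replace them with the one-line identity $g(s)-\sigma_\pm=b(\sigma_\pm-s)/(s\sigma_\pm)$. It already yields $w_{n+1}=\kappa w_n$ and makes the derivative computation unnecessary.
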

	
	The limit of the sequence $(s_n)_{n\in\mathbb{N}_0}$ from the previous lemma is precisely the slope of $W_X^{s+}$. Hence, if all preimages of a straight line segment lie in $\mathcal{Q}_1$ and its slope does not correspond to the unstable direction at $X$, the slopes of its preimages will converge to the stable direction at $X$.

    For the following result, let $\theta_1$ denote the portion of $L_{a,b}^{-1}(Oy^{+})$ which is a half-line above the segment $\overline{ZZ^{-1}}^u$, starting at $Z^{-2}$. Furthermore, for every $n\geqslant2$, let $\theta_n=L_{a,b}^{-n+1}(\theta_1)$. Notice that all $\theta_n$ are half-lines in $\mathcal{Q}_1$; see Figure \ref{figure:cor_cvg_Ws+X}.
	
	\begin{lem} \label{lem:y_preim_cvg_WXs+}
	For every $\varepsilon>0$, there is $k'\in\mathbb{N}$ such that $\theta_k$ and $W_X^{s+}$ are $\varepsilon$-close with respect to the Hausdorff distance for all $k \geqslant k'$.
	\end{lem}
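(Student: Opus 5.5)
The plan is to describe each $\theta_k$ completely by two pieces of data, its \emph{base point} and its \emph{direction}, and to show that both converge to the corresponding data of $W_X^{s+}$. By construction $\theta_1$ is a half-line in $\mathcal{Q}_1$, and for $k\geqslant 2$ we have $\theta_k=L_{a,b}^{-k+1}(\theta_1)$. Since $L_{a,b}^{-1}$ is affine on the upper half-plane and all $\theta_k$ lie in $\mathcal{Q}_1$, each $\theta_k$ is again a half-line. Its base point is the image of the base point $Z^{-2}$ of $\theta_1$, namely $Z^{-k-1}$. Because $Z\in W_X^u$, these base points converge to $X$, which is the base point of $W_X^{s+}$.

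For the directions, I would first compute the slope of $\theta_1$. Using $L_{a,b}^{-1}(u,v)=\bigl(\tfrac{v}{b},\,u-1+\tfrac{a}{b}|v|\bigr)$, the half-line $Oy^{+}$ is mapped into the line of slope $a$, so $\theta_1$ has slope $a$. By Lemma \ref{lem:slope_cvg_upper}(1), the slope of $\theta_{k+1}$ is $b/s+a$, where $s$ is the slope of $\theta_k$. The starting value $a$ differs from the excluded value $\tfrac12\bigl(a-\sqrt{a^2+4b}\bigr)<0$. Hence Lemma \ref{lem:slope_cvg_upper}(2) shows that the slopes of $\theta_k$ converge to $\tfrac12\bigl(a+\sqrt{a^2+4b}\bigr)$, which is the slope of $W_X^{s+}$. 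I would also check that the orientation of each ray (pointing up and to the right into $\mathcal{Q}_1$) is preserved, since the linear part of $L_{a,b}^{-1}$ on the upper half-plane maps vectors with positive coordinates to vectors with positive coordinates. The unit direction vectors $d_k$ of $\theta_k$ therefore converge to the unit direction vector $d$ of $W_X^{s+}$.

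Writing $\theta_k=\{Z^{-k-1}+t\,d_k\colon t\geqslant 0\}$ and $W_X^{s+}=\{X+t\,d\colon t\geqslant 0\}$, the statement is then to be concluded from the convergence $Z^{-k-1}\to X$ and $d_k\to d$. The main obstacle is exactly this final step. For rays with $d_k\neq d$, the Euclidean Hausdorff distance is infinite, since the separation at parameter $t$ grows like $t|d_k-d|$. So the $\varepsilon$-closeness in the lemma cannot be obtained from the two convergences in the plain Euclidean sense. I would resolve this by making the comparison through the natural parametrization of these rays. Concretely, I would pass to the one-point compactification (or the sphere with the chordal metric), where both sets become compact arcs ending at $\infty$. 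There the pointwise convergence $Z^{-k-1}+t\,d_k\to X+t\,d$ holds uniformly in $t\in[0,\infty]$, because the chordal metric damps the large-$t$ discrepancy. This gives Hausdorff $\varepsilon$-closeness for all $k\geqslant k'$ in that metric, which is the form of closeness needed later when the iterates $\overline{C^{-n}T^{-m_0-n}}^{s,Y}$, squeezed between $\theta_k$ and $W_X^{s+}$, are shown to approach $W_X^{s+}$.
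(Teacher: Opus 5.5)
Your argument is the paper's. The paper proves the lemma in one line from $Z^{-k-1}\to X$ and Lemma \ref{lem:slope_cvg_upper}(2), which are exactly your base-point and slope convergences; your orientation check is a harmless extra.

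Where you go further, you are right and the paper is silent. With the Euclidean $d_H$ of Subsection~\ref{subsec:dynsys_general_notions}, two non-parallel half-lines are at infinite Hausdorff distance. Here this happens for \emph{every} $k$: $\theta_1$ has slope $a\neq\tfrac12\bigl(a+\sqrt{a^2+4b}\bigr)$, and $s\mapsto b/s+a$ is injective, so no $\theta_k$ is parallel to $W_X^{s+}$. The lemma is therefore false as literally stated. Your chordal reformulation, based on the uniform convergence of $Z^{-k-1}+t\,d_k$ to $X+t\,d$ on $[0,\infty]$, is a correct true version. An equally natural Euclidean version is closeness on bounded sets. In eigen-coordinates $(\sigma,\upsilon)$ at $X$, $L_{a,b}^{-1}$ acts on $\mathcal{Q}_1$ as $(\sigma,\upsilon)\mapsto(\sigma/\lambda_X^s,\upsilon/\lambda_X^u)$. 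Then $\theta_k$ lies on the graph $\upsilon=c_1(\lambda_X^u)^{1-k}+c_2\,\sigma\,(\lambda_X^s/\lambda_X^u)^{k-1}$ with $c_2\neq0$. This tends to $0$ uniformly for bounded $\sigma$, and it shows exactly the linear divergence you noticed.

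One correction to your closing sentence: the chordal form is not what the later argument needs. Corollary \ref{cor:CR_0_preim_cvg_WXs+} claims Euclidean Hausdorff closeness of $L_{a,b}^{-k}(\overline{CT^{-m_0}}^{s,Y})$ to a segment of $W_X^{s+}$. These segments leave every bounded set, since their stable coordinates grow like $(\lambda_X^s)^{-k}$. So neither chordal closeness nor closeness on bounded sets says anything about them. What actually proves that corollary is the same eigen-coordinate computation applied directly to the compact segment $\overline{CT^{-m_0}}^{s,Y}$. Its unstable coordinates are multiplied by $(\lambda_X^u)^{-k}\to0$ uniformly. Projecting along the unstable direction onto $W_X^{s+}$ then gives $A_1,A_2$ at Hausdorff distance $O(|\lambda_X^u|^{-k})$, without passing through the $\theta_k$ at all.
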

	
	\begin{proof}
	Since $Z^{-1}$ lies on $Oy^{+}$, $Z^{-n-1}$ lies on the intersection of $L_{a,b}^{-n}(Oy^{+})$ with $\mathcal{Q}_1$. Now the claim follows from the fact that $\displaystyle\lim_{n\rightarrow\infty}Z^{-n}=X$ and Lemma \ref{lem:slope_cvg_upper}(2). 
	\end{proof}
	
	\begin{cor} \label{cor:CR_0_preim_cvg_WXs+}
	For every $\varepsilon>0$, there exists $k\in\mathbb{N}$ and points $A_1,A_2\in W_X^{s+}$ such that $L_{a,b}^{-k}(\overline{CT^{-m_0}}^{s,Y})$ and $\overline{A_1A_2}^{s}$ are $\varepsilon$-close with respect to the Hausdorff distance.
	\end{cor}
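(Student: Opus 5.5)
The plan is to trap the segment $\sigma_0:=\overline{CT^{-m_0}}^{s,Y}$ between two consecutive half-lines $\theta_k$ and then invoke Lemma \ref{lem:y_preim_cvg_WXs+}. Everything happens in $\mathcal{Q}_1$, where $L_{a,b}^{-1}$ is a single affine map. We already know that all preimages $\sigma_n:=L_{a,b}^{-n}(\sigma_0)=\overline{C^{-n}T^{-m_0-n}}^{s,Y}$ are straight segments in $\mathcal{Q}_1$.

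\textbf{Step 1: the endpoints of $\sigma_1$.} The segment $\sigma_1$ has endpoint $C^{-1}\in Oy^{+}$, above $Z^{-1}$ because $C_x>Z_x$ (Lemma \ref{lem:WsY_positive_x_02}). Its other endpoint is $T^{-m_0-1}$. Since $\sigma_0$ lies in the strip between $Ox^{+}$ and $L_{a,b}(Oy^{+})$-type boundaries, $\sigma_1$ lies in $L_{a,b}^{-1}(\mathcal{Q}_1)$ above $\overline{ZZ^{-1}}^u$. Consequently $\sigma_1$ lies in the closed region $\mathcal{R}_1$ between $Oy^{+}$ (above $Z^{-1}$), $\overline{Z^{-1}Z^{-2}}^u$ and $\theta_1$; the endpoint $T^{-m_0-1}$ lies on or left of $\theta_1$ because $T^{-m_0}\in\mathcal{Q}_1$. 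Applying $L_{a,b}^{-k}$, which is affine on $\mathcal{Q}_1$ and maps $Oy^{+}\cap\mathcal{Q}_1$ to $\theta_1$, gives the inclusion $\sigma_{k+1}\subset\mathcal{R}_{k+1}$. Here $\mathcal{R}_{k+1}=L_{a,b}^{-k}(\mathcal{R}_1)$ is the unbounded region bounded by $\theta_k$, $\theta_{k+1}$ and a short piece of $W_X^u$ near $X$ (namely $L_{a,b}^{-k}(\overline{Z^{-1}Z^{-2}}^u)$, which shrinks to $X$).

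\textbf{Step 2: boundedness.} The region $\mathcal{R}_{k}$ is unbounded, so Hausdorff closeness to a bounded piece of $W_X^{s+}$ needs $\sigma_k$ to stay bounded. Write $\sigma_k=L_{a,b}^{-k}(\sigma_0)$. Decompose the affine action around $X$ along the eigendirections: $L_{a,b}^{-1}$ contracts the unstable component by $1/|\lambda_X^u|<1$ and expands the stable component by $1/\lambda_X^s>1$. Both endpoints of $\sigma_k$ then have unstable coordinates tending to $0$, so $\sigma_k$ converges to the line $W_X^{s+}$ in the unstable coordinate. The stable coordinates, however, grow like $(\lambda_X^s)^{-k}$.

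\textbf{Step 3: choice of comparison points.} Because of this growth, for a given $\varepsilon$ I will not aim for a uniform $k$ tied to a fixed $\overline{A_1A_2}^s$. Instead, I take $A_1,A_2$ to be the orthogonal projections of the endpoints $C^{-k},T^{-m_0-k}$ onto the line containing $W_X^{s+}$. Then $d_H(\sigma_k,\overline{A_1A_2})$ is at most the maximum distance of the endpoints from that line. In the eigen-coordinates this distance is comparable to $|\lambda_X^u|^{-k}$ times the unstable coordinates of $C^{-1}$ and $T^{-m_0-1}$, so it tends to $0$.

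The main obstacle is to check that these projections actually land on the half-line $W_X^{s+}$ rather than on the ray through $X$ pointing downward. This requires the stable coordinates of the endpoints to have the correct sign. Since $\sigma_1$ lies above $W_X^u$ in $\mathcal{Q}_1$, which is on the side of $W_X^{s+}$, this sign condition should be true. I would confirm it by the same argument as in Lemma \ref{lem:y_preim_cvg_WXs+}: $\theta_k\to W_X^{s+}$ and $\sigma_k\subset\mathcal{R}_{k}$ lies between $\theta_{k-1}$ and $\theta_k$.
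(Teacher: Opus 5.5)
Your argument is correct, but it takes a different route from the paper's. The paper's proof only observes that $C^{-k-1}\in\theta_k$. It then cites Lemma \ref{lem:y_preim_cvg_WXs+} (the $\theta_k$ approach $W_X^{s+}$) and Lemma \ref{lem:slope_cvg_upper}(2) (the slopes of the preimage segments tend to the stable slope).

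You instead diagonalize the affine branch of $L_{a,b}^{-1}$ on the upper half-plane at $X$, with eigenvalues $1/\lambda_X^u$ and $1/\lambda_X^s$. You take $A_1,A_2$ to be projections of the endpoints of $\sigma_k$ and bound the Hausdorff distance by the unstable coordinate, which is $O(|\lambda_X^u|^{-k})$.

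What your route buys:
\begin{itemize}
\item It gives an explicit rate.
\item It handles the stretching of $\sigma_k$ directly. A nearby point plus a nearly correct slope controls the rest of a segment only if the slope error decays faster than the length grows. The bare convergence in Lemma \ref{lem:slope_cvg_upper}(2) does not supply this; your coordinates do so automatically.
\item It avoids relying on Lemma \ref{lem:y_preim_cvg_WXs+} for unbounded sets, where, taken literally, it fails. Every $\theta_k$ has slope different from $\tfrac{1}{2}(a+\sqrt{a^2+4b})$, so its Hausdorff distance to $W_X^{s+}$ is infinite.
\end{itemize}
The paper's route is shorter and reuses the fan of half-lines $\theta_k$, which reappears in the boundaries of the polygons $\Delta_n$ in Lemma \ref{lem:A1_invariant}.

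Two simplifications for your write-up.

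First, trapping $\sigma_k$ between $\theta_{k-1}$ and $\theta_k$ is unnecessary. You only use $\sigma_k\subset\mathcal{Q}_1$, so that the action is affine, and the paper establishes this just before the corollary.

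Second, the sign issue you call the main obstacle is immediate:
\begin{itemize}
\item The line $ZZ^{-1}$ is the unstable eigenline through $X$.
\item $\sigma_0$ lies strictly on the side of it containing $W_X^{s+}$, because $C_x>Z_x$ and $\sigma_0$ cannot cross $\overline{ZZ^{-1}}^u$ by Corollary \ref{cor:WuXWsY_empty}. This argument should replace your unclear sentence about ``$L_{a,b}(Oy^{+})$-type boundaries''.
\item Hence the stable coordinate is bounded below by a positive constant on $\sigma_0$, and it is multiplied by $1/\lambda_X^s>1$ at each step.
\end{itemize}
Projection along the unstable direction then lands exactly on $W_X^{s+}$, and orthogonal projection does so once $k$ is large.
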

	
	\begin{proof}
	Since $C^{-1}$ lies on $Oy^{+}$ above $Z^{-1}$, its further preimages lie on the preimages of that axis in $\mathcal{Q}_1$. Applying Lemmas \ref{lem:slope_cvg_upper}(2) and \ref{lem:y_preim_cvg_WXs+} completes the proof. 
	\end{proof}
	
	\begin{figure}[!ht]
	\begin{center}
	\begin{tikzpicture}[auto, xscale=1.8, yscale=1.25]
			\tikzstyle{nodec}=[draw,circle,fill=black,minimum size=2pt,
			inner sep=0pt, label distance=2mm]
			\tikzstyle{nodeh}=[draw,circle,fill=white,minimum size=4pt,
			inner sep=0pt]
			\tikzstyle{dot}=[circle,draw=none,fill=none,minimum size=0pt,inner sep=2pt, outer sep=-1pt]

			\draw[->] (-1,0)--(6.5,0) node [below]{$x$};
			\draw[->] (0,-0.5)--(0,5.5) node [left]{$y$};
			\node[label={[xshift=-0.2cm, yshift=-0.7cm]$0$}] at (0,0) {};
			
			\coordinate (e0) at (0,0);
			\coordinate (ex) at (1,0);
			\coordinate (ey) at (0,1);

			\coordinate (end) at (-1.5,2.5);			
			\coordinate (t0) at (5,0);

			\coordinate (t0m1) at (intersection of t0--end and e0--ey);

			\coordinate (t0m21) at (3.5,1);
			\coordinate (t0m22) at (3.5,2);
			\coordinate (t0m2) at (intersection of t0--end and t0m21--t0m22);

			\coordinate (t0m31) at (1,1);
			\coordinate (t0m32) at (1,2);
			\coordinate (t0m3) at (intersection of t0--end and t0m31--t0m32);

			\coordinate (t0m41) at (2.8,1);
			\coordinate (t0m42) at (2.8,2);
			\coordinate (t0m4) at (intersection of t0--end and t0m41--t0m42);

			\coordinate (t0m51) at (1.5,1);
			\coordinate (t0m52) at (1.5,2);
			\coordinate (t0m5) at (intersection of t0--end and t0m51--t0m52);

			\coordinate (x_1) at (2,1);
			\coordinate (x_2) at (2,2);
			\coordinate (x) at (intersection of t0--end and x_1--x_2);

			\coordinate (x_end) at (3.5,5);
			\coordinate (x1) at (1,6);
			\coordinate (x2) at (2,6);
			\coordinate (xend1) at (intersection of x--x_end and x1--x2);			

			\coordinate (m1end) at (6,4.5);
			\coordinate (m2end) at (1.6,5.1);
			\coordinate (m3end) at (4.6,5);
			\coordinate (m4end) at (2.7,5);		

			\coordinate (cm1) at (0,2.5);
			\coordinate (c) at (5.5,0);
			
			\coordinate (cm21) at (1,1);
			\coordinate (cm22) at (2,1);
			\coordinate (cm2) at (intersection of t0m2--m1end and cm21--cm22);

			\coordinate (cm31) at (1,2.1);
			\coordinate (cm32) at (2,2.1);
			\coordinate (cm3) at (intersection of t0m3--m2end and cm31--cm32);

			\coordinate (cm41) at (1,1.9);
			\coordinate (cm42) at (2,1.9);
			\coordinate (cm4) at (intersection of t0m4--m3end and cm41--cm42);
	
			\coordinate (cm51) at (1,3.1);	
			\coordinate (cm52) at (2,3.1);
			\coordinate (cm5) at (intersection of t0m5--m4end and cm51--cm52);			

			\coordinate (r) at (5,1.5);			
			\coordinate (rp1) at (-1,3.5);
			\coordinate (rm1) at (1,4);
			\coordinate (rm2) at (4,2.5);
			\coordinate (rm3) at (2,4.5);
			\coordinate (rm4) at (3.5,4);
			\coordinate (rm5) at (3,4.5);
			
			\draw[blue, thick] (t0)--(t0m1);
			\draw[blue, thick] (t0m1) to node[dot]{$W_X^u$} (end);
			\draw[red, thick] (x)--(xend1) node [right]{$W_X^{s+}$};
			\draw[thick, dashed] (t0m2)--(m1end) node[above]{$\theta_1$};
			\draw[thick, dashed] (t0m3)--(m2end) node[above]{$\theta_2$};
			\draw[thick, dashed] (t0m4)--(m3end) node[above]{$\theta_3$};
			\draw[thick, dashed] (t0m5)--(m4end) node[above]{$\theta_4$};
			
			\draw[violet, thick] (rp1)--(cm1)--(rm1)--(cm3)--(rm3)--(cm5)--(rm5);
			\draw[violet, thick] (c) to node[dot, swap]{$W_Y^s$} (r);
			\draw[violet, thick] (r)--(cm2)--(rm2)--(cm4)--(rm4);						
			\node[nodec, color=black, label={[below, color=black]$X$}] at (x) {};

			\node[nodec, color=blue, label={[below, color=blue]$Z$}] at (t0) {};	
			\node[nodec, color=blue, label={[below left, color=blue]$Z^{-1}$}] at (t0m1) {};
			\node[nodec, color=blue, label={[below, color=blue]$Z^{-2}$}] at (t0m2) {};
			\node[nodec, color=blue, label={[below left, color=blue]$Z^{-3}$}] at (t0m3) {};
			\node[nodec, color=blue, label={[below left, color=blue]$Z^{-4}$}] at (t0m4) {};
			\node[nodec, color=blue, label={[below left, color=blue]$Z^{-5}$}] at (t0m5) {};
			
			\node[nodec, color=violet, label={[below right, xshift=-1mm, yshift=1mm, color=violet]$C^{-1}$}] at (cm1) {};
			\node[nodec, color=violet, label={[below, color=violet]$C$}] at (c) {};
			\node[nodec, color=violet, label={[below right, xshift=-2mm, color=violet]$C^{-2}$}] at (intersection of t0m2--m1end and cm21--cm22) {};
			\node[nodec, color=violet, label={[below right, xshift=-1mm, yshift=1mm, color=violet]$C^{-3}$}] at (intersection of t0m3--m2end and cm31--cm32) {};
			\node[nodec, color=violet, label={[below, xshift=3mm, color=violet]$C^{-4}$}] at (intersection of t0m4--m3end and cm41--cm42) {};
			\node[nodec, color=violet, label={[below, xshift=3mm, color=violet]$C^{-5}$}] at (intersection of t0m5--m4end and cm51--cm52) {};
			
			\node[nodec, color=violet, label={[left, color=violet]$T^{-m_0+1}$}] at (rp1) {};
			\node[nodec, color=violet, label={[above, color=violet]$T^{-m_0}$}] at (r) {};
			\node[nodec, color=violet, label={[above, yshift=-1mm, color=violet]$T^{-m_0-1}$}] at (rm1) {};
			\node[nodec, color=violet, label={[above right, xshift=-1mm, color=violet]$T^{-m_0-2}$}] at (rm2) {};
			\node[nodec, color=violet, label={[above, yshift=-0.5mm, color=violet]$T^{-m_0-3}$}] at (rm3) {};
			\node[nodec, color=violet, label={[above right, xshift=-3mm, yshift=0mm, color=violet]$T^{-m_0-4}$}] at (rm4) {};
			\node[nodec, color=violet, label={[above, yshift=-1mm, color=violet]$T^{-m_0-5}$}] at (rm5) {};
			
			\end{tikzpicture}
	\end{center}
	\caption{For every $n\in\mathbb{N}$, points $Z^{-n-1}$ and $C^{-n-1}$ lie on the $n$-th preimage of $Oy^{+}$ (dashed lines) which converge to $W_X^{s+}$ above the line segment $\overline{ZZ^{-1}}^{u}$ (blue) due to Lemma \ref{lem:y_preim_cvg_WXs+}. As a consequence, preimages of $\overline{CT^{-m_0}}^{s,Y}$ approach $W_X^{s+}$ as well (Corollary \ref{cor:CR_0_preim_cvg_WXs+}).}
	\label{figure:cor_cvg_Ws+X}
	\end{figure}
	
	Therefore, backward iterates of segments of $W_Y^{s}$ in $\mathcal{Q}_1$ above $\overline{ZZ^{-1}}^{u}$ become arbitrarily close in direction and position to $W_X^{s+}$. Moreover, in $\mathcal{Q}_1$, $L_{a,b}^{-1}$ acts affinely and expands vectors in the stable direction by factor $\frac{1}{|\lambda_X^s|} > 1$. Hence, the lengths of these iterated segments tend to infinity, and the corresponding points escape to infinity in $\mathcal{Q}_1$.
	 
	Therefore, $W_X^{s+}\cup\overline{XV^1}^{s}$ separates $\mathcal{Q}_1$ into two connected components, and the two branches of $W_Y^s$ approaching $W_X^{s+}$ form a cross-cut of each component. Since the intersection of $W_Y^s$ with the union of $\mathcal{Q}_2$, $\mathcal{Q}_3$ and $\mathcal{Q}_4$ is the polygonal line $[C,C^{-1}]^{s,Y}$, and that polygonal line separates each one of these quadrants, we obtain the following result.
	
	\begin{cor} \label{cor:WsY_separates}
	$W_Y^s$ separates the plane.
	\end{cor}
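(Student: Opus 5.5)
I would prove Corollary \ref{cor:WsY_separates} by showing that $W_Y^s$ is the image of a proper embedding $\gamma\colon\mathbb{R}\to\mathbb{R}^2$, meaning a closed embedded line with both ends going to infinity. The Jordan curve theorem on the sphere $\mathbb{R}^2\cup\{\infty\}$ then applies to $W_Y^s\cup\{\infty\}$, a simple closed curve, and shows that $\mathbb{R}^2\setminus W_Y^s$ has exactly two components. The setup is as follows.
\begin{itemize}
\item $W_Y^s$ is injectively parametrized: it is an increasing union of the arcs $L_{a,b}^{-n}(\overline{YT})$, and it has no self-intersections since $L_{a,b}$ is a homeomorphism.
\item It splits at $Y$ into the two half-branches $W_Y^{s+}$ and $W_Y^{s-}=L_{a,b}(W_Y^{s+})$.
\item So it suffices to check that each half-branch is a proper ray, i.e.\ leaves every compact set, and that $W_Y^s$ is closed in $\mathbb{R}^2$.
\end{itemize}

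\textbf{Step 1: each half-branch is a proper ray.} Write $W_Y^{s+}=[Y,C]^{s,Y}\cup\overline{CT^{-m_0}}^{s,Y}\cup\bigcup_{n\ge1}\overline{C^{-n}T^{-m_0-n}}^{s,Y}$, up to the odd/even bookkeeping along the branch. By the discussion preceding the statement:
\begin{itemize}
\item all the segments $\overline{C^{-n}T^{-m_0-n}}^{s,Y}$ lie in $\mathcal{Q}_1$ above $\overline{ZZ^{-1}}^{u}$;
\item $L_{a,b}^{-1}$ acts affinely on them and expands the stable direction at $X$ by $1/|\lambda_X^s|$;
\item by Corollary \ref{cor:CR_0_preim_cvg_WXs+} their directions converge to that of $W_X^{s+}$.
\end{itemize}
I would take the coordinate along $W_X^{s+}$, measured from $X$, of the far endpoints $T^{-m_0-n}$. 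Using Lemma \ref{lem:y_preim_cvg_WXs+}, these coordinates grow geometrically, so the tail of the branch eventually leaves every ball. The same argument applied to $W_Y^{s-}=L_{a,b}(W_Y^{s+})$ handles the other branch, since its tail is the image of this one and also lies in $\mathcal{Q}_1$, on the other side of $W_X^{s+}$.

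\textbf{Step 2: closedness.} I would deduce closedness from properness. Every compact set $\mathcal{B}$ meets only the compact initial piece $[T^{-m_0+1},T^{-m_0}]^{s,Y}$ together with finitely many tail segments. Hence $W_Y^s\cap\mathcal{B}$ is a finite union of closed segments, so $W_Y^s$ is closed. This gives a proper embedded line.

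\textbf{Main obstacle.} The delicate point is the monotone escape in Step 1. Convergence of direction and Hausdorff closeness to a bounded piece of $W_X^{s+}$ do not by themselves prevent the tail segments from oscillating and revisiting a bounded region. I would handle this by using the affine conjugacy of $L_{a,b}^{-1}$ near $\mathcal{Q}_1$ to the linear map $\operatorname{diag}(1/\lambda_X^u,1/\lambda_X^s)$ in eigencoordinates centred at $X$. The far endpoint $T^{-m_0-n}$ has stable coordinate multiplied by $1/\lambda_X^s>1$ at each step, while its unstable coordinate is contracted. Its stable coordinate is nonzero because $W_Y^s\cap W_X^s=\emptyset$, so it is not on $W_X^{s}$. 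Hence the stable coordinate tends to $\pm\infty$. The segments are convex combinations in these coordinates and also stay bounded away from $X$, because they lie above $\overline{ZZ^{-1}}^u$. Together these give $\dist(\overline{C^{-n}T^{-m_0-n}}^{s,Y},X)\to\infty$, which is the properness needed.
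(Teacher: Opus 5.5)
Your argument is correct in substance, but it reaches the conclusion by a different topological route than the paper. The paper argues quadrant by quadrant. The set $W_X^{s+}\cup\overline{XV^1}^{s}$ splits $\mathcal{Q}_1$ into two pieces. The two tails of $W_Y^s$, which approach $W_X^{s+}$ (Corollary \ref{cor:CR_0_preim_cvg_WXs+}) and run off to infinity, form cross-cuts of these pieces. The remaining part $[C,C^{-1}]^{s,Y}$ cuts across $\mathcal{Q}_2$, $\mathcal{Q}_3$ and $\mathcal{Q}_4$.

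You instead show that $W_Y^s$ is a properly embedded line and apply the Jordan curve theorem to $W_Y^s\cup\{\infty\}$ on the sphere. Your route has three advantages. It gives ``exactly two components'' in one stroke, which the paper uses immediately afterwards. It does not actually need the slope convergence of Lemma \ref{lem:slope_cvg_upper} or Corollary \ref{cor:CR_0_preim_cvg_WXs+}. And your eigencoordinate argument supplies the uniform escape to infinity that the paper's sketch states in a single sentence. The paper's route, in exchange, gives the explicit picture of the two components inside each quadrant. That picture is what is used to cut $\mathcal{A}_1$ into $\mathcal{S}_1,\ldots,\mathcal{S}_5$ in Lemma \ref{lem:A1_invariant}.

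One step in your ``main obstacle'' paragraph needs repair. In eigencoordinates at $X$, the coordinate along the stable eigenvector (the one multiplied by $1/\lambda_X^s$ under $L_{a,b}^{-1}$) vanishes on the unstable line through $X$, i.e.\ on the line $ZZ^{-1}$. It does not vanish on $W_X^s$. Avoiding $W_X^s$ only makes the \emph{unstable} coordinate nonzero, and that coordinate is contracted, so it is useless for escape.

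The correct reason is that $\overline{CT^{-m_0}}^{s,Y}$ lies strictly above the line $ZZ^{-1}$. This holds because $C$ lies to the right of $Z$ (Lemma \ref{lem:WsY_positive_x_02}) and $W_Y^s\cap W_X^u=\emptyset$ (Corollary \ref{cor:WuXWsY_empty}). By compactness, the stable coordinate is then at least some $\delta>0$ on the whole segment, and hence at least $\delta(\lambda_X^s)^{-n}$ on its $n$-th preimage. This uniform lower bound is what gives properness. Growth at the far endpoint $T^{-m_0-n}$ together with ``bounded away from $X$'' would not by itself stop the near end $C^{-n}$ from staying in a bounded region.

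The same bound must also be applied to the connecting pieces $L_{a,b}^{-n}(\overline{T^{-m_0}C^{-2}}^{s,Y})$, which also lie above $ZZ^{-1}$ but are missing from your decomposition of the tail. With this fix, your Steps 1 and 2 and the Jordan curve argument go through.
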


	\subsection{Approaching infinity} \label{subsec:approaching_infinity}
	
	Corollary \ref{cor:WsY_separates} implies that the complement $\mathbb{R}^2\setminus W_Y^s$ consists of two connected components. We denote by $\mathcal{A}_1$ the one containing the fixed point $X$ and by $\mathcal{A}_2$ the other one. Note that $\mathcal{A}_1$ also contains $W_X^s$ since stable manifolds of distinct saddle points do not intersect. In addition, Corollary \ref{cor:WuXWsY_empty} implies that $\mathcal{A}_1$ contains $W_X^u$ and hence also the accumulation set $\ell$. In particular, the periodic orbit $\{P,P'\}$ is contained in $\mathcal{A}_1$.
	
\begin{lem}\label{lem:A1_invariant}
	The sets $\mathcal{A}_1$ and $\mathcal{A}_2$ are $L_{a,b}$-invariant and $L_{a,b}^{-1}$-invariant.
	\end{lem}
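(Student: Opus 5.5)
The plan rests on two facts: $L_{a,b}$ is a homeomorphism of $\mathbb{R}^2$, and $W_Y^s$ is invariant under both $L_{a,b}$ and $L_{a,b}^{-1}$, so $L_{a,b}(W_Y^s)=W_Y^s$. From this, $L_{a,b}$ restricts to a homeomorphism of $\mathbb{R}^2\setminus W_Y^s$ onto itself. A homeomorphism maps connected components onto connected components. By Corollary \ref{cor:WsY_separates} there are exactly two components, $\mathcal{A}_1$ and $\mathcal{A}_2$. Hence $L_{a,b}$ either fixes each of them setwise or swaps them. The same dichotomy holds for $L_{a,b}^{-1}$, with the same outcome.

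To exclude the swap, I use the fixed point $X$. By the definition of $\mathcal{A}_1$, $X\in\mathcal{A}_1$, and $X\notin W_Y^s$ because $X$ is a fixed point distinct from $Y$. Since $L_{a,b}(X)=X$, the component $L_{a,b}(\mathcal{A}_1)$ contains $X$, so $L_{a,b}(\mathcal{A}_1)=\mathcal{A}_1$. Because $L_{a,b}$ is a bijection of $\mathbb{R}^2\setminus W_Y^s=\mathcal{A}_1\sqcup\mathcal{A}_2$, it follows that $L_{a,b}(\mathcal{A}_2)=\mathcal{A}_2$. Applying $L_{a,b}^{-1}$ to both equalities gives $L_{a,b}^{-1}(\mathcal{A}_i)=\mathcal{A}_i$ for $i=1,2$. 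This yields invariance under both maps, in fact with equality.

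I expect essentially no obstacle. The only point needing care is that the complement has exactly two components, and that is already supplied by Corollary \ref{cor:WsY_separates}. The bijection-of-components argument does not require a count of two; it only needs each component to be sent to some component, with $X$ pinning down $\mathcal{A}_1$. A second anchor for $\mathcal{A}_2$ is therefore unnecessary: once $\mathcal{A}_1$ is known to be fixed, complementarity in $\mathbb{R}^2\setminus W_Y^s$ determines $\mathcal{A}_2$.
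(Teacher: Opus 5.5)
Your argument is correct, and it takes a genuinely different and much shorter route than the paper's. The paper does not use the fact that a homeomorphism preserving $W_Y^s$ permutes the components of the complement. Instead it partitions $\mathcal{A}_1$ into five polygonal regions $\mathcal{S}_1,\dots,\mathcal{S}_5$ on which $L_{a,b}$ is affine, with $\mathcal{S}_1\setminus W_X^{s+}$ further tiled by polygons $\Delta_n$ satisfying $L_{a,b}(\Delta_{n+1})=\Delta_n$. It computes each image explicitly, using facts such as $W_Y^s\cap\overline{Z^1C^1}=\{C^1\}$ and the position of $C$ from Lemma \ref{lem:WsY_positive_x_02}, and checks that the images $\mathcal{S}'_i$ again tile $\mathcal{A}_1$. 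Only the final passage to $L_{a,b}^{-1}$ and to $\mathcal{A}_2$ coincides with yours.

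Your route needs only three things: $L_{a,b}$ is a homeomorphism with $L_{a,b}(W_Y^s)=W_Y^s$, the point $X\in\mathcal{A}_1$ is fixed, and Corollary \ref{cor:WsY_separates} gives exactly two components. It avoids all the geometric bookkeeping and yields equality $L_{a,b}(\mathcal{A}_i)=\mathcal{A}_i$ directly. What the paper's approach buys is the explicit piecewise description itself. The mapping relations among the $\mathcal{S}_i$ and $\Delta_n$ are reused in Lemma \ref{lem:U_invariant}: they give the bounded forward-invariant region $\mathcal{U}$ and show that $\mathcal{S}_1\setminus W_X^{s+}$ eventually enters it. That in turn supplies the boundedness of forward orbits in Lemmas \ref{lem:omega_limit_X_singleton}, \ref{lem:omega_limit_X} and Theorem \ref{thm:A_1_basin_ell}. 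If you used your proof here, that piecewise information would still have to be established separately later.

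A small precision: your remark that the count of two is unnecessary applies only to $\mathcal{A}_1$. For $\mathcal{A}_2$ you do use that it is all of $(\mathbb{R}^2\setminus W_Y^s)\setminus\mathcal{A}_1$, which is exactly what Corollary \ref{cor:WsY_separates} provides.
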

	
	\begin{proof}
	We first prove that $\mathcal{A}_1$ is $L_{a,b}$-invariant. We do so by partitioning $\mathcal{A}_1$ into five regions $\mathcal{S}_1,\mathcal{S}_2,\ldots,\mathcal{S}_5$ such that $L_{a,b}$ acts as an affine map on every one of them (see Figure \ref{figure:lem_A1_invariant}).
	
	\begin{figure}[!ht]
	\begin{center}
	\begin{subfigure}{.4\textwidth}
	\begin{flushleft}
	\begin{tikzpicture}[auto, scale=.6]
			\tikzstyle{nodec}=[draw,circle,fill=black,minimum size=2pt,
			inner sep=0pt, label distance=2mm]
			\tikzstyle{nodeh}=[draw,circle,fill=white,minimum size=4pt,
			inner sep=0pt]
			\tikzstyle{dot}=[circle,draw=none,fill=none,minimum size=0pt,inner sep=2pt, outer sep=-1pt]

			\draw[->] (-4.5,0)--(7,0) node [below]{$x$};
			\draw[->] (0,-6.5)--(0,8.5) node [left]{$y$};
			\node[label={[xshift=-0.2cm, yshift=-0.7cm]$0$}] at (0,0) {};
			
			\coordinate (e0) at (0,0);
			\coordinate (ex) at (1,0);
			\coordinate (ey) at (0,1);
			
			\coordinate (yend1) at (7,5.26);
			\coordinate (yend2) at (-7,5.26);
			\coordinate (lm1O) at (0,-4);
			
			\coordinate (yend11) at ($(lm1O)!0.8!(yend1)$);
			\coordinate (yend21) at ($(lm1O)!0.8!(yend2)$);
			\draw[thick, dashed] (yend21)--(lm1O)--(yend11) node[above, xshift=3mm]{$y=a|x|-1$};
			
			\node[nodec, color=black, label={[right, color=black]$O^{-1}$}] at (lm1O) {};
			
			\coordinate (t0) at (5,0);
			\coordinate (t0m1) at (0,2.16);
			\coordinate (t0m2) at (intersection of t0--t0m1 and lm1O--yend1);
			\coordinate (x1) at (2,1);
			\coordinate (x2) at (2,2);
			\coordinate (x) at (intersection of t0--t0m1 and x1--x2);
			\coordinate (xend) at (4.82,7.64);
			
			\draw[blue, thick] (t0)--(x);
			\draw[blue, thick] (x)--(t0m1);
			
			\draw[red, thick] (x)--(xend) node[above]{$W_X^{s+}$};

			\node[nodec, color=blue, label={[above, color=blue]$Z$}] at (t0) {};
			\node[nodec, color=blue, label={[above left, xshift=4mm, yshift=1mm, color=blue]$Z^{-2}$}] at (t0m2) {};
			\node[nodec, color=blue, label={[right, yshift=1mm, color=blue]$Z^{-1}$}] at (t0m1) {};
			\node[nodec, color=black, label={[above, color=black, xshift=-0.5mm]$X$}] at (x) {};
			
			\coordinate (r0) at (0,-6);
			\coordinate (r0m11) at (-4.28,1);
			\coordinate (r0m12) at (-4.28,2);
			\coordinate (r0m1) at (intersection of r0m11--r0m12 and lm1O--yend2);
			
			\coordinate (f1) at (3.98,-4.44);
			\coordinate (f2) at (6.12,-1.66);
			\coordinate (f3) at (5.58,1.76);
			\coordinate (f41) at (4,1);
			\coordinate (f42) at (4,2);
			\coordinate (f4) at (intersection of f41--f42 and lm1O--yend1);
			\coordinate (f5) at (4.54,3.42);
			\coordinate (f6) at (3.66,2.4);
			\coordinate (f7) at (5,6);
			\coordinate (f8) at (3.8,4.02);
			\coordinate (f9) at (5.28,7.6);
			
			\coordinate (g1) at (-4,3);
			\coordinate (g2) at (-0.66,3.82);
			\coordinate (g3) at (0,2.76);
			\coordinate (g4) at (1.7,5.3);
			\coordinate (g5) at (1.44,3.08);
			\coordinate (g6) at (3,6);
			\coordinate (g7) at (2.82,4.12);
			\coordinate (g8) at (4.4,7.7);
			\coordinate (g12) at (-3,4);

			\coordinate (c) at (intersection of e0--ex and f2--f3);
			
			\draw [violet, thick] (r0) to node[dot]{$W_Y^s$} (r0m1);
			\draw [violet, thick] (r0)--(f1);
			\draw [violet, thick] (r0m1)--(g1);
			\draw [violet, thick, dashed] (f1)--(f2);
			\draw [violet, thick, dashed] (g1)--(g12);
			\draw [violet, thick] (f2)--(f3)--(f4)--(f5)--(f6)--(f7)--(f8)--(f9);
			\draw [violet, thick] (g12)--(g2)--(g3)--(g4)--(g5)--(g6)--(g7)--(g8);
			
			\node[nodec, color=violet, label={[below left, color=violet]$T$}] at (r0) {};
			\node[nodec, color=violet, label={[left, yshift=-1mm, color=violet]$T^{-1}$}] at (r0m1) {};
			\node[nodec, color=violet, label={[below right, color=violet]$C$}] at (c) {};
			\node[nodec, color=violet, label={[above, yshift=3mm, xshift=0.5mm, color=violet]$C^{-1}$}] at (g3) {};
			
			\node[nodec, color=violet] at (f1) {};
			\node[nodec, color=violet] at (f2) {};
			\node[nodec, color=violet] at (f3) {};
			\node[nodec, color=violet] at (f4) {};
			\node[nodec, color=violet] at (f5) {};
			\node[nodec, color=violet] at (f6) {};
			\node[nodec, color=violet] at (f7) {};
			\node[nodec, color=violet] at (f8) {};
			
			\node[nodec, color=violet] at (g1) {};
			\node[nodec, color=violet] at (g12) {};
			\node[nodec, color=violet] at (g2) {};
			\node[nodec, color=violet] at (g4) {};
			\node[nodec, color=violet] at (g5) {};
			\node[nodec, color=violet] at (g6) {};
			\node[nodec, color=violet] at (g7) {};
			
			\node[dot] (reg1) at (2.2,3.22) {$\mathcal{S}_1$};
			\node[fill=green!10!white] (reg2) at (1,-1) {$\mathcal{S}_2$};
			\node[fill=violet!10!white] (reg3) at (3,-2.8) {$\mathcal{S}_3$};
			\node[fill=orange!10!white] (reg4) at (-2,1) {$\mathcal{S}_4$};
			\node[dot] (reg5) at (-0.62,-3.98) {$\mathcal{S}_5$};
			
			\begin{pgfonlayer}{bg}
				\fill[teal!10!white] (g8.center)--(g7.center)--(g6.center)--(g5.center)--(g4.center)--(g3.center)--(t0m1.center)--(t0.center)--(c.center)--(f3.center)--(f4.center)--(f5.center)--(f6.center)--(f7.center)--(f8.center)--(f9.center)--(xend.center)--cycle;
				\fill[green!10!white] (t0m1.center)--(lm1O.center)--(t0m2.center)--cycle;
				\fill[violet!10!white] (lm1O.center)--(r0.center)--(f1.center)--(f2.center)--(c.center)--(t0.center)--(t0m2.center)--cycle;
				\fill[orange!10!white] (t0m1.center)--(g3.center)--(g2.center)--(g12.center)--(g1.center)--(r0m1.center)--(lm1O.center)--cycle;
				\fill[cyan!10!white] (r0m1.center)--(r0.center)--(lm1O.center)--cycle;
				
				\fill[pattern=dots] (t0m1.center)--(lm1O.center)--(t0m2.center)--cycle;
				\fill[pattern=vertical lines] (lm1O.center)--(r0.center)--(f1.center)--(f2.center)--(c.center)--(t0.center)--(t0m2.center)--cycle;
				\fill[pattern=horizontal lines] (t0m1.center)--(g3.center)--(g2.center)--(g12.center)--(g1.center)--(r0m1.center)--(lm1O.center)--cycle;
			\end{pgfonlayer}	
			
			\end{tikzpicture}
	\end{flushleft}
	\end{subfigure}\hspace{4em}%
	\begin{subfigure}{.4\textwidth}
	\begin{tikzpicture}[auto, scale=.6]
			\tikzstyle{nodec}=[draw,circle,fill=black,minimum size=2pt,
			inner sep=0pt, label distance=2mm]
			\tikzstyle{nodeh}=[draw,circle,fill=white,minimum size=4pt,
			inner sep=0pt]
			\tikzstyle{dot}=[circle,draw=none,fill=none,minimum size=0pt,inner sep=2pt, outer sep=-1pt]

			\draw[->] (-4.5,0)--(7,0) node [below]{$x$};
			\draw[->] (0,-6.5)--(0,8.5) node [left]{$y$};
			\node[label={[xshift=-0.2cm, yshift=-0.7cm]$0$}] at (0,0) {};
			
			\coordinate (e0) at (0,0);
			\coordinate (ex) at (1,0);
			\coordinate (ey) at (0,1);

			\coordinate (xim) at (1.54,0);

			\coordinate (t0) at (5,0);
			\coordinate (t0m1) at (0,2.16);
			\coordinate (t0m2) at (intersection of t0--t0m1 and lm1O--yend1);
			\coordinate (x1) at (2,1);
			\coordinate (x2) at (2,2);
			\coordinate (x) at (intersection of t0--t0m1 and x1--x2);
			\coordinate (xend) at (4.82,7.64);
			
			\draw[blue, thick] (t0)--(x);
			\draw[blue, thick] (x)--(t0m1);
			
			\draw[red, thick] (x)--(xend) node[above]{$W_X^{s+}$};

			\node[nodec, color=blue, label={[above, color=blue]$Z$}] at (t0) {};
			\node[nodec, color=blue, label={[above left, xshift=4mm, yshift=1mm, color=blue]$Z^{-2}$}] at (t0m2) {};
			\node[nodec, color=blue, label={[right, yshift=1mm, color=blue]$Z^{-1}$}] at (t0m1) {};
			\node[nodec, color=black, label={[above, color=black, xshift=-0.5mm]$X$}] at (x) {};
			
			\coordinate (r0) at (0,-6);
			\coordinate (r0m11) at (-4.28,1);
			\coordinate (r0m12) at (-4.28,2);
			\coordinate (r0m1) at (intersection of r0m11--r0m12 and lm1O--yend2);
			\coordinate (r0p1) at (intersection of r0--r0m1 and e0--ex);
			
			\coordinate (f1) at (3.98,-4.44);
			\coordinate (f2) at (6.12,-1.66);
			\coordinate (f3) at (5.58,1.76);
			\coordinate (f41) at (4,1);
			\coordinate (f42) at (4,2);
			\coordinate (f4) at (intersection of f41--f42 and lm1O--yend1);
			\coordinate (f5) at (4.54,3.42);
			\coordinate (f6) at (3.66,2.4);
			\coordinate (f7) at (5,6);
			\coordinate (f8) at (3.8,4.02);
			\coordinate (f9) at (5.28,7.6);
			
			\coordinate (g1) at (-4,3);
			\coordinate (g2) at (-0.66,3.82);
			\coordinate (g3) at (0,2.76);
			\coordinate (g4) at (1.7,5.3);
			\coordinate (g5) at (1.44,3.08);
			\coordinate (g6) at (3,6);
			\coordinate (g7) at (2.82,4.12);
			\coordinate (g8) at (4.4,7.7);
			\coordinate (g12) at (-3,4);
			
			\coordinate (c) at (intersection of e0--ex and f2--f3);
			\coordinate (c1) at ($(g12)!0.25!(g2)$);
			
			\coordinate (t0p1) at (intersection of t0--t0m1 and xim--c1);
			
			\node[nodec, color=blue, label={[above, color=blue]$Z^{1}$}] at (t0p1) {};
			
			\draw [blue, thick] (t0m1)--(t0p1);
			\draw [black, thick, dashed] (t0p1)--(c1); 
			
			\draw [violet, thick] (r0) to node[dot]{$W_Y^s$} (r0m1);
			\draw [violet, thick] (r0)--(f1);
			\draw [violet, thick] (r0m1)--(g1);
			\draw [violet, thick, dashed] (f1)--(f2);
			\draw [violet, thick, dashed] (g1)--(g12);
			\draw [violet, thick] (f2)--(f3)--(f4)--(f5)--(f6)--(f7)--(f8)--(f9);
			\draw [violet, thick] (g12)--(g2)--(g3)--(g4)--(g5)--(g6)--(g7)--(g8);
			
			\node[nodec, color=violet, label={[below left, color=violet]$T$}] at (r0) {};
			\node[nodec, color=violet, label={[left, yshift=-1mm, color=violet]$T^{-1}$}] at (r0m1) {};
			\node[nodec, color=violet, label={[below right, color=violet]$C$}] at (c) {};
			\node[nodec, color=violet, label={[above, color=violet]$C^1$}] at (c1) {};
			\node[nodec, color=violet, label={[below left, color=violet]$T^1$}] at (r0p1) {};
			
			\node[nodec, color=violet] at (f1) {};
			\node[nodec, color=violet] at (f2) {};
			\node[nodec, color=violet] at (f3) {};
			\node[nodec, color=violet] at (f4) {};
			\node[nodec, color=violet] at (f5) {};
			\node[nodec, color=violet] at (f6) {};
			\node[nodec, color=violet] at (f7) {};
			\node[nodec, color=violet] at (f8) {};
			
			\node[nodec, color=violet] at (g1) {};
			\node[nodec, color=violet] at (g12) {};
			\node[nodec, color=violet] at (g2) {};
			\node[nodec, color=violet] at (g3) {};
			\node[nodec, color=violet] at (g4) {};
			\node[nodec, color=violet] at (g5) {};
			\node[nodec, color=violet] at (g6) {};
			\node[nodec, color=violet] at (g7) {};

			\node[dot] (reg1) at (2.2,3.22) {$\mathcal{S}'_1$};
			\node[fill=green!10!white] (reg2) at (0.9,0.8) {$\mathcal{S}'_2$};
			\node[fill=violet!10!white] (reg3) at (-2,1.8) {$\mathcal{S}'_3$};
			\node[fill=orange!10!white] (reg4) at (3,-2) {$\mathcal{S}'_4$};
			\node[dot] (reg5) at (-1.5,-1.5) {$\mathcal{S}'_5$};
			
			\begin{pgfonlayer}{bg}
				\fill[teal!10!white] (g8.center)--(g7.center)--(g6.center)--(g5.center)--(g4.center)--(g3.center)--(g2.center)--(c1.center)--(t0p1.center)--(t0.center)--(c.center)--(f3.center)--(f4.center)--(f5.center)--(f6.center)--(f7.center)--(f8.center)--(f9.center)--(xend.center)--cycle;
				\fill[green!10!white] (t0.center)--(e0.center)--(t0m1.center)--cycle;
				\fill[violet!10!white] (e0.center)--(t0m1.center)--(t0p1.center)--(c1.center)--(g12.center)--(g1.center)--(r0m1.center)--(r0p1.center)--(e0.center)--cycle;
				\fill[orange!10!white] (r0.center)--(e0.center)--(c.center)--(f2.center)--(f1.center)--cycle;
				\fill[cyan!10!white] (r0.center)--(r0p1.center)--(e0.center)--cycle;
				
				\fill[pattern=dots] (t0.center)--(e0.center)--(t0m1.center)--cycle;
				\fill[pattern=vertical lines] (e0.center)--(t0m1.center)--(t0p1.center)--(c1.center)--(g12.center)--(g1.center)--(r0m1.center)--(r0p1.center)--(e0.center)--cycle;
				\fill[pattern=horizontal lines] (r0.center)--(e0.center)--(c.center)--(f2.center)--(f1.center)--cycle;
			\end{pgfonlayer}	
			\end{tikzpicture}
	\end{subfigure}
	\end{center}
	\caption{Regions $\mathcal{S}_i$ and their images $\mathcal{S}'_i=L_{a,b}(\mathcal{S}_i)$ from Lemma \ref{lem:A1_invariant}.}
	\label{figure:lem_A1_invariant}
	\end{figure}

	\begin{enumerate}
	\item Let $\mathcal{S}_1$ be the intersection of $\mathcal{A}_1$ with the portion of $\mathcal{Q}_1$ above the line $\overline{ZZ^{-1}}^{u}$. Then $\mathcal{S}_1\setminus W_X^{s+}$ can be written as a union of polygons $\Delta_n$, $n\in\mathbb{N}_0$, such that
	\begin{equation} \label{eq:polygons_delta_n}
	\partial\Delta_n=\overline{Z^{-n}Z^{-n-2}}^{u}\cup\overline{Z^{-n-2}C^{-n-2}}\cup\overline{Z^{-n}C^{-n}}\cup[C^{-n},C^{-n-2}]^{s,Y}.
	\end{equation}
	Note that the boundary of each $\Delta_n$ consists of portions of $W_X^u$, $W_Y^s$, and the preimages of $Oy^{+}$. In particular, $\partial\Delta_0$ also contains a portion of $Ox^{+}$, that is, $\overline{ZC}$, while $\partial\Delta_1$ contains a portion of $Oy^{+}$, namely $\overline{Z^{-1}C^{-1}}$; see Figure \ref{figure:cor_cvg_Ws+X}.
	
	Since, by construction, $L_{a,b}$ maps $\partial\Delta_{n+1}$ onto $\partial\Delta_n$ and acts affinely on each $\Delta_n$, we have that $L_{a,b}(\Delta_{n+1})=\Delta_n$ for every $n\in\mathbb{N}_0$. In particular, $L_{a,b}(\Delta_0)$ is a polygon with boundary $\overline{C^{-1}Z^{-1}}\cup\overline{Z^{-1}Z^1}^{u}\cup\overline{Z^1C^1}$ which is contained in $\mathcal{A}_1$ ($W_Y^s \cap \overline{Z^1C^1} = \{C^1\}$). This fact, together with $L_{a,b}(W_X^{s+})=W_X^{s+}$, proves that $L_{a,b}(\mathcal{S}_1)\subset\mathcal{A}_1$.
	
	\item Let $\mathcal{S}_2$ be the triangle $O^{-1}Z^{-1}Z^{-2}$ in $\mathcal{Q}_1$ and $\mathcal{Q}_4$, where $O$ is the origin. Then $\mathcal{S}_2$ maps to the triangle $OZ^{-1}Z$, which is contained in $\mathcal{A}_1$ due to Lemma \ref{lem:WsY_positive_x_02}.
	
	\item Let $\mathcal{S}_3$ be the polygon in $\mathcal{Q}_1$ and $\mathcal{Q}_4$ with boundary
	\begin{equation*}
	\partial\mathcal{S}_3=\overline{TO^{-1}}\cup\overline{O^{-1}Z^{-2}}\cup\overline{Z^{-2}Z}^{u}\cup\overline{ZC}\cup[C,T]^{s,Y}.
	\end{equation*}
	Then $\mathcal{S}_3$ is mapped to the polygon in $\mathcal{Q}_2$ with boundary
	\begin{equation*}
	\partial L_{a,b}(\mathcal{S}_3)=\overline{T^1O}\cup\overline{OZ^{-1}}\cup\overline{Z^{-1}Z^1}^{u}\cup\overline{Z^1C^1}\cup[C^1,T^1]^{s,Y}.
	\end{equation*}
	Since $W_Y^s\cap\overline{Z^1C^1}=\{C^1\}$, it follows that $L_{a,b}(\mathcal{S}_3)$ is contained in $\mathcal{A}_1$.
	
	\item We denote by $\mathcal{S}_4$ the polygon in $\mathcal{Q}_2$ and $\mathcal{Q}_3$ with boundary
	\begin{equation*}
	\partial\mathcal{S}_4=\overline{C^{-1}O^{-1}}\cup\overline{O^{-1}T^{-1}}\cup[T^{-1},C^{-1}]^{s,Y}.
	\end{equation*}
	In this case, $\mathcal{S}_4$ maps to the polygon with boundary $\overline{TO}\cup\overline{OC}\cup[C,T]^{s,Y}$, which is the intersection of $\mathcal{A}_1$ with $\mathcal{Q}_4$.
	
	\item Finally, $\mathcal{S}_5$ denotes the triangle in $\mathcal{Q}_2$ and $\mathcal{Q}_3$ with vertices $O^{-1}$, $T$, and $T^{-1}$. Its image $L_{a,b}(\mathcal{S}_5)$ is the triangle with vertices $O$, $T^1$ and $T$, that is, the intersection of $\mathcal{A}_1$ with $\mathcal{Q}_3$.  
	\end{enumerate}
	
	For every $i=1,2,\ldots,5$, let $\mathcal{S}'_i=L_{a,b}(\mathcal{S}_i)$. The boundaries of the regions $\mathcal{S}_i$ match exactly along common edges, and their union covers $\mathcal{A}_1$, thus the sets $\mathcal{S}'_i$ also form a partition of $\mathcal{A}_1$; see Figure \ref{figure:lem_A1_invariant}. Therefore, $L_{a,b}(\mathcal{A}_1)=\mathcal{A}_1$. Since $L_{a,b}$ is bijective, we also have $L_{a,b}^{-1}(\mathcal{A}_1)=L_{a,b}^{-1}(L_{a,b}(\mathcal{A}_1))=\mathcal{A}_1$, that is, $\mathcal{A}_1$ is also $L_{a,b}^{-1}$-invariant. Since $W_Y^s$ is $L_{a,b}$-invariant and $L_{a,b}^{-1}$-invariant as well, the same holds for the complement of the union of these two sets, $\mathbb{R}^2\setminus(\mathcal{A}_1\cup W_Y^s)=\mathcal{A}_2$, which completes the proof.
	\end{proof}
	
	Observe the regions $\mathcal{S}_1$, $\mathcal{S}_2$, $\mathcal{S}_3$, $\mathcal{S}_4$, $\mathcal{S}_5$ introduced in the proof of Lemma \ref{lem:A1_invariant} (and reproduced in Figure \ref{figure:lem_A1_invariant}). Let $\mathcal{U}$ denote the union
	\begin{equation} \label{eq:region_U}
	\mathcal{U} = \mathcal{S}_2 \cup \mathcal{S}_3 \cup \mathcal{S}_4 \cup \mathcal{S}_5.
	\end{equation}
	
	\begin{lem} \label{lem:U_invariant}
	\begin{enumerate}
		\item The region $\mathcal{U}$ given by \eqref{eq:region_U} is $L_{a,b}$-invariant. 
		\item Every point in $\mathcal{S}_1 \setminus W_X^{s+}$ is mapped to $\mathcal{U}$ under finitely many iterations of $L_{a,b}$.
	\end{enumerate}
	\end{lem}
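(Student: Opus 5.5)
We will read both parts directly off the images $\mathcal{S}'_i=L_{a,b}(\mathcal{S}_i)$ computed in the proof of Lemma \ref{lem:A1_invariant}, combined with the decomposition $\mathcal{S}_1\setminus W_X^{s+}=\bigcup_{n\geqslant 0}\Delta_n$ from \eqref{eq:polygons_delta_n}.

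\emph{Part (1).} It suffices to show that each $\mathcal{S}'_i$, $i=2,\ldots,5$, lies in $\mathcal{U}$. We check the four cases against Figure \ref{figure:lem_A1_invariant}.
\begin{enumerate}
\item $\mathcal{S}'_5$ is the triangle $OT^1T$, which is $\mathcal{A}_1\cap\mathcal{Q}_3$. This set is contained in $\mathcal{S}_5\cup\mathcal{S}_4$, since these two regions cover the part of $\mathcal{A}_1$ in $\mathcal{Q}_2\cup\mathcal{Q}_3$ bounded by $\overline{TO^{-1}}$, $\overline{O^{-1}C^{-1}}$ and $[T^{-1},C^{-1}]^{s,Y}$.
\item $\mathcal{S}'_4=\mathcal{A}_1\cap\mathcal{Q}_4$. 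The regions $\mathcal{S}_2\cup\mathcal{S}_3\cup\mathcal{S}_5$ cover everything in $\mathcal{A}_1\cap\mathcal{Q}_4$ below the segment $\overline{ZZ^{-1}}^u$. Since $\mathcal{Q}_4$ lies entirely below that line, this gives $\mathcal{S}'_4\subset\mathcal{U}$.
\item $\mathcal{S}'_2$ is the triangle $OZ^{-1}Z$, which lies below $\overline{ZZ^{-1}}^u$ and is therefore contained in $\mathcal{S}_2\cup\mathcal{S}_3$.
\item $\mathcal{S}'_3$ is the region in $\mathcal{Q}_2$ bounded by $\overline{T^1O}$, $\overline{OZ^{-1}}$, $\overline{Z^{-1}Z^1}^u$, $\overline{Z^1C^1}$ and $[C^1,T^1]^{s,Y}$. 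We must show it lies in $\mathcal{S}_4\cup\mathcal{S}_5$, i.e., that it sits inside the region bounded by $\overline{C^{-1}O^{-1}}$, $\overline{O^{-1}T}$ and $[T,C^{-1}]^{s,Y}$. The portion of $W_Y^s$ in $\mathcal{Q}_2$ is exactly $[T^{-1},C^{-1}]^{s,Y}$ (together with the part of $\overline{TT^1}^{s,Y}$ there), and $\overline{OZ^{-1}}\subset\overline{OC^{-1}}$ on $Oy^+$. So the only points that could escape are those above the line $O^{-1}Z^{-1}\ldots$. This is handled by observing that $\mathcal{S}_4\cup\mathcal{S}_5$ is all of $\mathcal{A}_1\cap(\mathcal{Q}_2\cup\mathcal{Q}_3)$ lying on the $L_{a,b}^{-1}(\mathcal{Q}_3\cup\mathcal{Q}_4)$ side of $L_{a,b}^{-1}(Ox^+)$, whereas $\mathcal{S}_1$ meets $\mathcal{Q}_2$ in nothing.
\end{enumerate}

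The cleanest uniform formulation is as follows: $\mathcal{U}=\mathcal{A}_1\setminus\Int\mathcal{S}_1$, and $\mathcal{S}_1\subset\mathcal{Q}_1$ lies above $\overline{ZZ^{-1}}^u$. Hence $\mathcal{S}'_i\cap\Int\mathcal{S}_1=\emptyset$ whenever $\mathcal{S}'_i$ lies in $\mathcal{Q}_2\cup\mathcal{Q}_3\cup\mathcal{Q}_4$ or below $\overline{ZZ^{-1}}^u$.

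\emph{Main obstacle.} The delicate case is $\mathcal{S}'_3$. Its boundary segment $\overline{Z^1C^1}$, and the arc $\overline{Z^{-1}Z^1}^u$, lie in $\mathcal{Q}_2$ above the level of $Z^{-1}$. We must confirm that these do not reach into $\mathcal{Q}_1$, which holds because $L_{a,b}(\mathcal{Q}_1\cup\mathcal{Q}_4)$ lies to the left of $Oy$ on the relevant strip (Figure \ref{fig:lozi_quadrants_images}). We must also confirm that $\overline{C^1Z^1}$ stays inside the region bounded by $[T^{-1},C^{-1}]^{s,Y}$, which uses $W_Y^s\cap\overline{Z^1C^1}=\{C^1\}$ and $W_Y^s\cap W_X^u=\emptyset$ (Corollary \ref{cor:WuXWsY_empty}). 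Together with $L_{a,b}(\mathcal{A}_1)=\mathcal{A}_1$ (Lemma \ref{lem:A1_invariant}), this yields $L_{a,b}(\mathcal{U})\subset\mathcal{A}_1\setminus\Int\mathcal{S}_1=\mathcal{U}$.

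\emph{Part (2).} Take $A\in\mathcal{S}_1\setminus W_X^{s+}$. Then $A\in\Delta_n$ for some $n\geqslant 0$. Since $L_{a,b}(\Delta_{m+1})=\Delta_m$, we get $A^n\in\Delta_0$, and $L_{a,b}(\Delta_0)$ is the polygon bounded by $\overline{C^{-1}Z^{-1}}$, $\overline{Z^{-1}Z^1}^u$ and $\overline{Z^1C^1}$. This polygon lies in $\mathcal{Q}_2$ and is contained in $\mathcal{S}'_3\cup\mathcal{S}_4$, hence in $\mathcal{U}$ by the same geometric facts used in Part (1). Therefore $A^{n+1}\in\mathcal{U}$.

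One point needs care. Points of $\Delta_n$ lying on its boundary edge $\overline{Z^{-n}C^{-n}}$, which is shared with $\Delta_{n-1}$, are assigned to the smaller index; this does not affect the conclusion.
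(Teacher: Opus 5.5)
Your proposal is correct and follows essentially the paper's route: part (1) is read off from the images $\mathcal{S}'_i=L_{a,b}(\mathcal{S}_i)$ computed in the proof of Lemma \ref{lem:A1_invariant}, and part (2) is the shift $L_{a,b}(\Delta_{n+1})=\Delta_n$ followed by $L_{a,b}(\Delta_0)\subset\mathcal{U}$. Your identity $\mathcal{U}=\mathcal{A}_1\setminus\Int\mathcal{S}_1$, combined with $L_{a,b}(\mathcal{A}_1)=\mathcal{A}_1$, is a tidy way to package that case check. One small correction: $L_{a,b}(\Delta_0)$ need not lie in $\mathcal{S}'_3\cup\mathcal{S}_4$ (if $C\in\overline{TT^{-2}}^{s,Y}$, then $C^1\in\overline{T^1T^{-1}}^{s,Y}\subset\partial\mathcal{S}_5$ and the image enters $\mathcal{S}_5$), but all you need is $L_{a,b}(\Delta_0)\subset\mathcal{A}_1\cap\mathcal{Q}_2\subseteq\mathcal{S}_4\cup\mathcal{S}_5$, which your characterization of $\mathcal{U}$ already yields.
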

	
	\begin{proof}
	\begin{enumerate}
		\item In the proof of Lemma \ref{lem:A1_invariant}, we have shown that $L_{a,b}(\mathcal{S}_2) \subseteq \mathcal{S}_2 \cup \mathcal{S}_3$, $L_{a,b}(\mathcal{S}_3) \subseteq \mathcal{S}_4 \cup \mathcal{S}_5$, $L_{a,b}(\mathcal{S}_4) \subseteq \mathcal{S}_2 \cup \mathcal{S}_3$, $L_{a,b}(\mathcal{S}_5) \subseteq \mathcal{S}_4 \cup \mathcal{S}_5$. Therefore, for every $i \in \{2,3,4,5\}$, the region $\mathcal{S}_i$ is mapped to $\mathcal{U}$, so the claim follows.
		
		\item For $n \in \mathbb{N}_0$, let $\Delta_n$ be the polygon whose boundary is given by \eqref{eq:polygons_delta_n}. From the proof of Lemma \ref{lem:A1_invariant}, we know that $\mathcal{S}_1 \setminus W_X^{s+} = \bigcup_{n \in \mathbb{N}_0}\Delta_n$, and that $L_{a,b}(\Delta_{n+1}) = \Delta_n$ for every $n \in \mathbb{N}_0$. In particular, $\Delta_0$ is mapped to $\mathcal{S}_4 \subset \mathcal{U}$. Therefore, for every $n \in \mathbb{N}_0$, we have that $L_{a,b}^{n}(\Delta_n) = \Delta_0$, and hence $L_{a,b}^{n+1}(\Delta_n) \subset \mathcal{U}$, which completes the proof.
	\end{enumerate}
	\end{proof}
	
	Therefore, $\mathcal{U}$ is an $L_{a,b}$-invariant region, and the orbit of every point in $\mathcal{A}_1\setminus W_X^s$ eventually lands in $\mathcal{U}$. The following result shows that all the "interesting dynamics" for $L_{a,b}$ is contained in $\mathcal{A}_1$.
	
	\begin{prop}\label{prop:A2_infinity}
	All points in $\mathcal{A}_2$ tend to infinity under forward iterations of $L_{a,b}$.
	\end{prop}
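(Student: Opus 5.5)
I will argue by contradiction, combining the non-wandering set description (Theorem \ref{thm:L2_non_wandering}) with the invariance of $\mathcal{A}_2$ (Lemma \ref{lem:A1_invariant}). Fix $A \in \mathcal{A}_2$ and suppose its forward orbit does not tend to infinity. Then some subsequence $A^{n_k}$ is bounded and converges to a point $I$, so $\omega(A,L_{a,b}) \neq \emptyset$. Passing to even or odd subsequences, I may assume $I \in \omega(A,L_{a,b}^2)$ or $I \in \omega(A^1,L_{a,b}^2)$. In either case $I \in \Omega(L_{a,b}^2) \subseteq \ell \cup \{X,Y\}$.

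\textbf{Case $I \in \ell$ or $I = X$.} Since $\ell \cup \{X\} \subset \mathcal{A}_1$ and $\mathcal{A}_1$ is open, a neighbourhood of $I$ lies in $\mathcal{A}_1$. Some iterate $A^{n_k}$ then lies in $\mathcal{A}_1$. This contradicts the forward invariance of $\mathcal{A}_2$ and the disjointness $\mathcal{A}_1 \cap \mathcal{A}_2 = \emptyset$.

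\textbf{Case $I = Y$, the main obstacle.} Here points of $\mathcal{A}_2$ really can come close to $Y$, since $Y \in W_Y^s = \partial \mathcal{A}_2$. I will use local hyperbolicity at $Y$. In the affine chart near $Y$, the map expands along $W_Y^u$ by $\lambda_Y^u>1$ and contracts along $\overline{TT^1}^{s,Y}$. An orbit that enters a small ball $B_\varepsilon(Y)$ at a point off $W_Y^s$ must leave that ball along the unstable direction. The branch of $W_Y^u$ on the $\mathcal{A}_2$ side of $\overline{TT^1}^{s,Y}$ is the ray from $Y$ away from $U^{-1}$. Since $U^{-1}$ lies on the $\mathcal{A}_1$ side, this outer ray stays in $\mathcal{Q}_3$, where $L_{a,b}$ is affine and the ray is invariant.

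I then want a forward-invariant region $\mathcal{N} \subset \mathcal{A}_2 \cap \mathcal{Q}_3$ on which orbits diverge. A candidate is the cone between $\overline{T^1 Y}$ continued and the outer ray of $W_Y^u$, lying below $Y$. On $\mathcal{N}$ the map $L_{a,b}$ is affine, so the distance to the line $TT^1$ grows by the factor $\lambda_Y^u$ at each step. Orbits in $\mathcal{N}$ therefore tend to infinity linearly in that coordinate, and $\omega = \emptyset$. Using the eigen-coordinates at $Y$, every point of $B_\varepsilon(Y)\cap\mathcal{A}_2$ lands in $\mathcal{N}$ after finitely many steps: the stable coordinate shrinks while the unstable one, which is nonzero and of fixed sign on the $\mathcal{A}_2$ side, grows. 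This contradicts $A^{n_k} \to Y$.

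The delicate points are two. First, I must check that the $\mathcal{A}_2$ side of $\overline{TT^1}^{s,Y}$ locally is the side not containing $O$. This follows from the description of $\mathcal{A}_1\cap\mathcal{Q}_3$ as the triangle $OT^1T$ in the proof of Lemma \ref{lem:A1_invariant}. Second, I must verify that $\mathcal{N}$ is mapped into itself, which means the sign conditions $x<0,\ y<0$ persist. I expect this sign check in $\mathcal{Q}_3$, using $\lambda_Y^u>1$ and the explicit eigenvectors $\binom{\lambda}{b}$, to be the main computational step.

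Once both cases are excluded, no subsequence of the forward orbit is bounded, so $|A^n| \to \infty$.
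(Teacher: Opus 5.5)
\textbf{A step fails: the invariant region you chose for the case $I=Y$.} Your overall route is the paper's. The reduction to $I=Y$ via Theorem~\ref{thm:L2_non_wandering}, the openness of $\mathcal{A}_1$ and the invariance of $\mathcal{A}_2$ is exactly what the paper does. For $I=Y$, the paper likewise traps the orbit in a part of $\mathcal{Q}_3$ where $L_{a,b}$ is the affine saddle at $Y$. The trouble is your cone $\mathcal{N}$, bounded by the ray from $Y$ toward $T$ and the outer ray of $W_Y^u$. It fails for two reasons.
\begin{itemize}
\item \emph{It is not forward-invariant.} Since $\lambda_Y^s<0$, the stable coordinate changes sign at every step; indeed $L_{a,b}(\overline{YT})=\overline{YT^1}$. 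So $L_{a,b}$ maps your cone onto the cone bounded by the ray toward $T^1$ and the outer unstable ray, and then back again. No cone bounded by a stable half-ray can be invariant.
\item \emph{It is not contained in $\mathcal{Q}_3$.} The ray from $Y$ through $T$ continues past $T$ into $\mathcal{Q}_4$, where $L_{a,b}$ sends points to the upper half-plane. The companion cone likewise crosses $Ox^{-}$ beyond $T^1$.
\end{itemize}
So your planned sign check would fail. Your step \emph{every point of $B_\varepsilon(Y)\cap\mathcal{A}_2$ lands in $\mathcal{N}$ after finitely many steps} only yields an orbit alternating between two cones. That does not keep the orbit where $L_{a,b}$ is affine, so no escape follows.

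\textbf{The fix, which is what the paper does.} Let $\mathcal{N}$ be the part of $\mathcal{Q}_3$ on the opposite side of the line $TT^1$ from $O$. This is $\mathcal{Q}_3$ minus the closed triangle $OTT^1$, that is, $\mathcal{A}_2\cap\mathcal{Q}_3$.
\begin{itemize}
\item In eigen-coordinates at $Y$ this region is $\{u>0\}\cap\mathcal{Q}_3$. The map $u\mapsto\lambda_Y^u u$ with $\lambda_Y^u>0$ preserves $\{u>0\}$.
\item It stays inside $\mathcal{Q}_3$. For $(x,y)\in\mathcal{Q}_3$ we have $y'=bx<0$. Also $x'=1+y+ax\geqslant 0$ only when $y\geqslant -ax-1$, i.e.\ in the triangle with vertices $O$, $(0,-1)$, $(-1/a,0)$.
\item That small triangle lies inside the triangle $OTT^1$. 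First, $T_y<-1$, because $T^1_x=1+T_y<0$. Second, $T^1_x<-1/a$, because $L_{a,b}(T^1)=(1+aT^1_x,\,bT^1_x)=T^2$ lies on $\overline{YT}\subset\mathcal{Q}_3$.
\end{itemize}
This is the paper's remark that points of $\mathcal{Q}_3$ mapped into $\mathcal{Q}_4$ belong to $\mathcal{A}_1$. With this $\mathcal{N}$, no landing step is needed, since $B_\varepsilon(Y)\cap\mathcal{A}_2\subset\mathcal{N}$ already for small $\varepsilon$. Once some $A^{n_k}\in\mathcal{N}$, its distance to the line $TT^1$ is positive, because it is off $W_Y^s$. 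That distance is multiplied by $\lambda_Y^u$ at each step, which contradicts $A^{n_{k'}}\to Y$.
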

	
	\begin{proof}
	It suffices to show that every orbit in $\mathcal{A}_2$ intersects every bounded set at most finitely many times. Assume for contradiction that there exists a bounded set $\mathcal{B}$ and $A \in \mathcal{A}_2$ such that infinitely many elements of the sequence $(A^n)_{n \in \mathbb{N}}$ lie in $\mathcal{B}$. Then these forward iterates lie in $\Cl\mathcal{B}$, which is compact, so there exists a subsequence $(A^{n_k})_{k \in \mathbb{N}}$ and $G \in \Cl\mathcal{B}$ such that $A^{n_k} \rightarrow G$ as $k \rightarrow \infty$. We have that $G \in \omega(A,L_{a,b})$.
	
	If $(n_k)_{k \in \mathbb{N}}$ contains infinitely many even terms, then $G \in \omega(A,L_{a,b}^2)$. Otherwise, $(n_k + 1)_{k}$ contains infinitely many even numbers and $A^{n_k + 1} \rightarrow G^1$, so $G^1 \in \omega(A,L_{a,b}^2)$. Therefore, by possibly replacing $G$ with its forward image, we have $G \in \omega(A,L_{a,b}^2)$.
	
	Since $\omega(A,L_{a,b}^2) \subseteq \Omega(L_{a,b}^2) \subseteq \ell \cup \{X,Y\}$ (where the last inclusion holds by Theorem \ref{thm:L2_non_wandering}), it follows that $G \in \ell \cup \{X,Y\}$. On the other hand, $\mathcal{A}_2$ is $L_{a,b}$-invariant due to Lemma \ref{lem:A1_invariant}, so the forward orbit of $A$ is contained in $\mathcal{A}_2$ and $G \in \Cl\mathcal{A}_2$. Since $\mathcal{A}_1$ is open and contains $X$ and $\ell$, neither $X$ nor any point of $\ell$ belongs to $\Cl\mathcal{A}_2$. Hence, $G = Y$ and $A^{n_k} \rightarrow Y$ as $k \rightarrow \infty$.
	
	In particular, for a sufficiently large $k$, $A^{n_k}$ lies in $\mathcal{Q}_3$ in the exterior of the triangle $OTT^1$, and all forward iterates of $A^{n_k}$ remain in the same region (all points in $\mathcal{Q}_3$ that get mapped to $\mathcal{Q}_4$ belong to $\mathcal{A}_1$). Therefore, $L_{a,b}$ acts on the forward orbit of $A^{n_k}$ as an affine map, so for every $i \in \mathbb{N}_0$,
	\begin{gather*}
	\dist(A^{n_k + i},TT^1) = |\lambda_Y^u|^{i} \dist(A^{n_k},TT^1),\\
	\dist(A^{n_k + i},YU^{-1}) = |\lambda_Y^s|^{i} \dist(A^{n_k},YU^{-1}).
	\end{gather*}
Therefore, the distance of $A^{n_k + i}$	to the line $TT^1$ grows exponentially by factor $|\lambda_Y^u| > 1$, and the distance to $YU^{-1}$ decreases by factor $|\lambda_Y^s|^{i}$, so the forward iterates of $A^{n_k}$ tend to infinity in $\mathcal{Q}_3$, which contradicts $A^{n_k} \rightarrow Y$. Therefore, our initial assumption was false, that is, every orbit in $\mathcal{A}_2$ intersects every bounded set at most finitely many times, which completes the proof.  
	\end{proof}

	\begin{rem} \label{rem:A2_infinity_other_proof}
	The preceding proof establishes escape to infinity using the non-wandering set. However, the escape mehanism can also be understood by analyzing the qualitative behavior of orbits in $\mathcal{A}_2$, as illustrated in Figure \ref{figure:thm_A2_infinity}. In order to provide geometric insight into this dynamical behavior, we sketch the corresponding argument below.
	
	Suppose that $A \in \mathcal{A}_2 \cap \mathcal{Q}_1$. Then $L_{a,b}$ acts on $A$ as an affine map and the distances of $A^j$ to $W_X^{s+}$ increase by the factor $|\lambda_X^u|$. Therefore, there exists $n_1 \in \mathbb{N}$ such that $A^{n_1} \in \mathcal{Q}_2$.
	
	Points from $\mathcal{Q}_2$ are mapped to $\mathcal{Q}_3$ or $\mathcal{Q}_4$, and points from $\mathcal{Q}_4\cap\mathcal{A}_2$ are all mapped to $\mathcal{Q}_2$. If the forward orbit of $A^{n_1}$ remained in $\mathcal{Q}_2 \cup \mathcal{Q}_4$, then successive applications of $L_{a,b}$ would alternate between these two quadrants. Consequently, $L_{a,b}^2$ would act on the orbit as an affine contraction whose attracting fixed points are $P \in \mathcal{Q}_4$ and $P' \in \mathcal{Q}_2$. Hence, the forward orbit of $A^{n_1}$ would converge to the periodic orbit $\{P,P'\}$. This is not possible because the orbit is contained in the invariant set $\mathcal{A}_2$, whereas $P,P' \in \mathcal{A}_1$. Therefore, there exists $n_2 \in \mathbb{N}$ such that $A^{n_2} \in \mathcal{Q}_3$.
	
	Finally, in $\mathcal{Q}_3$, the distances of $A^{n_2+j}$ to the line $TT^1$ grow exponentially by factor $|\lambda_Y^u|$ and the distances to $W_Y^{u+}$ decrease by $|\lambda_Y^s|$, so we see that forward iterates of $A$ in $\mathcal{Q}_3$ tend to infinity. 
	\end{rem}
	
	\begin{figure}[!ht]
	\begin{center}
	\begin{tikzpicture}[auto, scale=.7]
			\tikzstyle{nodec}=[draw,circle,fill=black,minimum size=2pt,
			inner sep=0pt, label distance=2mm]
			\tikzstyle{nodeq}=[draw,circle,fill=black,minimum size=4pt,
			inner sep=0pt, label distance=2mm]
			\tikzstyle{nodeh}=[draw,circle,fill=white,minimum size=4pt,
			inner sep=0pt]
			\tikzstyle{dot}=[circle,draw=none,fill=none,minimum size=0pt,inner sep=2pt, outer sep=-1pt]

			\draw[->] (-7,0)--(7.5,0) node [below]{$x$};
			\draw[->] (0,-6.5)--(0,8.5) node [left]{$y$};
			\node[label={[xshift=-0.2cm, yshift=-0.7cm]$0$}] at (0,0) {};
			
			\coordinate (e0) at (0,0);
			\coordinate (ex) at (1,0);
			\coordinate (ey) at (0,1);
			
			\coordinate (yend1) at (7,5.26);
			\coordinate (yend2) at (-7,5.26);
			\coordinate (lm1O) at (0,-4);

			\coordinate (t0) at (5,0);
			\coordinate (t0m1) at (0,2.16);
			\coordinate (t0m2) at (intersection of t0--t0m1 and lm1O--yend1);
			\coordinate (x1) at (2,1);
			\coordinate (x2) at (2,2);
			\coordinate (x) at (intersection of t0--t0m1 and x1--x2);
			\coordinate (xend) at (4.82,7.64);
			
			\draw[blue, thick] (t0)--(x);
			\draw[blue, thick] (x) to node[dot]{$W_X^u$} (t0m1);
			
			\draw[red, thick] (x)--(xend) node[above]{$W_X^{s+}$};

			\node[nodec, color=black, label={[below, color=black]$X$}] at (x) {};
			
			\coordinate (r0) at (0,-6);
			\coordinate (r0m11) at (-4.28,1);
			\coordinate (r0m12) at (-4.28,2);
			\coordinate (r0m1) at (intersection of r0m11--r0m12 and lm1O--yend2);
			
			\coordinate (f1) at (3.98,-4.44);
			\coordinate (f2) at (6.12,-1.66);
			\coordinate (f3) at (5.58,1.76);
			\coordinate (f41) at (4,1);
			\coordinate (f42) at (4,2);
			\coordinate (f4) at (intersection of f41--f42 and lm1O--yend1);
			\coordinate (f5) at (4.54,3.42);
			\coordinate (f6) at (3.66,2.4);
			\coordinate (f7) at (5,6);
			\coordinate (f8) at (3.8,4.02);
			\coordinate (f9) at (5.28,7.6);
			
			\coordinate (g1) at (-3.42,3.5);
			\coordinate (g2) at (-1.18,3.92);
			\coordinate (g3) at (0,2.76);
			\coordinate (g4) at (1.7,5.3);
			\coordinate (g5) at (1.44,3.08);
			\coordinate (g6) at (3,6);
			\coordinate (g7) at (2.82,4.12);
			\coordinate (g8) at (4.4,7.7);
			
			\coordinate (c) at (intersection of e0--ex and f2--f3);
			
			\draw [violet, thick] (r0)--(r0m1);
			\draw [violet, thick] (r0) to node[dot]{$W_Y^s$} (f1);
			\draw [violet, thick] (r0m1)--(g1);
			\draw [violet, thick, dashed] (f1)--(f2);
			\draw [violet, thick, dashed] (g1)--(g2);
			\draw [violet, thick] (f2)--(f3)--(f4)--(f5)--(f6)--(f7)--(f8)--(f9);
			\draw [violet, thick] (g2)--(g3)--(g4)--(g5)--(g6)--(g7)--(g8);

			\coordinate (y1) at (-2.07,1);
			\coordinate (y2) at (-2.07,2);
			\coordinate (y) at (intersection of y1--y2 and r0--r0m1);
			\node[nodec, color=black, label={[right, color=black]$Y$}] at (y) {};
			\coordinate (uyend) at (-6.84,-6.16);
			\draw [teal, thick] (y)--(uyend) node[below]{$W_Y^{u+}$};
			
			\coordinate (q1) at (3.56,7.06); \node[nodeq, color=black] at (q1) {};
			\coordinate (q2) at (4.92,4.78); \node[nodeq, color=black] at (q2) {};
			\coordinate (q3) at (2.14,6.08); \node[nodeq, color=black] at (q3) {};
			\coordinate (q4) at (5.78,2.74); \node[nodeq, color=black] at (q4) {};
			\coordinate (q5) at (-1.16,5.28); \node[nodeq, color=black] at (q5) {};
			\coordinate (q6) at (7.08,-2.32); \node[nodeq, color=black] at (q6) {};
			\coordinate (q7) at (-3.5,4.92); \node[nodeq, color=black] at (q7) {};
			\coordinate (q8) at (4.3,-6.14); \node[nodeq, color=black] at (q8) {};
			\coordinate (q9) at (-4.84,3.6); \node[nodeq, color=black] at (q9) {};
			\coordinate (q10) at (-0.94,-5.6); \node[nodeq, color=black] at (q10) {};
			\coordinate (q11) at (-4.3,-1.48); \node[nodeq, color=black] at (q11) {};
			\coordinate (q12) at (-2.74,-4.56); \node[nodeq, color=black] at (q12) {};
			\coordinate (q13) at (-4.5,-3); \node[nodeq, color=black] at (q13) {};
			\coordinate (q14) at (-4.42,-4.96); \node[nodeq, color=black] at (q14) {};
			\coordinate (q15) at (-5.74,-4.6); \node[nodeq, color=black] at (q15) {};
			\coordinate (q16) at (-6.28,-6.06); \node[nodeq, color=black] at (q16) {};
			
			\draw[-{Stealth[scale=1]}, shorten >=3pt, thick, dotted, bend right] (q1)--(q2);
			\draw[-{Stealth[scale=1]}, shorten >=3pt, thick, dotted, bend right] (q2)--(q3);
			\draw[-{Stealth[scale=1]}, shorten >=3pt, thick, dotted, bend right] (q3)--(q4);
			\draw[-{Stealth[scale=1]}, shorten >=3pt, thick, dotted, bend right] (q4)--(q5);
			\draw[-{Stealth[scale=1]}, shorten >=3pt, thick, dotted, bend right] (q5)--(q6);
			\draw[-{Stealth[scale=1]}, shorten >=3pt, thick, dotted, bend right] (q6)--(q7);
			\draw[-{Stealth[scale=1]}, shorten >=3pt, thick, dotted, bend right] (q7)--(q8);
			\draw[-{Stealth[scale=1]}, shorten >=3pt, thick, dotted, bend right] (q8)--(q9);
			\draw[-{Stealth[scale=1]}, shorten >=3pt, thick, dotted, bend right] (q9)--(q10);
			\draw[-{Stealth[scale=1]}, shorten >=3pt, thick, dotted, bend right] (q10)--(q11);
			\draw[-{Stealth[scale=1]}, shorten >=3pt, thick, dotted, bend right] (q11)--(q12);
			\draw[-{Stealth[scale=1]}, shorten >=3pt, thick, dotted, bend right] (q12)--(q13);
			\draw[-{Stealth[scale=1]}, shorten >=3pt, thick, dotted, bend right] (q13)--(q14);
			\draw[-{Stealth[scale=1]}, shorten >=3pt, thick, dotted, bend right] (q14)--(q15);
			\draw[-{Stealth[scale=1]}, shorten >=3pt, thick, dotted, bend right] (q15)--(q16);

			\end{tikzpicture}
	\end{center}
	\caption{Schematic forward dynamics of a point in $\mathcal{A}_2$, illustrating escape to infinity (Remark \ref{rem:A2_infinity_other_proof}).}
	\label{figure:thm_A2_infinity}
	\end{figure}	
 
	As a consequence, we see that the basin of attraction of $\ell$ is contained in $\mathcal{A}_1$. In fact, our final goal is to show that all points in $\mathcal{A}_1$ except $W_X^s$ tend to $\ell$ under forward iteration of $L_{a,b}$.
	
	To achieve this, we first analyze how forward orbits in $\mathcal{A}_1$ can accumulate on the remaining part of the non-wandering set of $L_{a,b}^2$, namely, the fixed points $X$ and $Y$. The next two lemmas establish, in two steps, that accumulation at a fixed point can only occur along its stable manifold. The first lemma treats the case in which the fixed point is the entire $\omega$-limit set. The second removes this restriction and shows that it suffices for the fixed point merely to belong to the $\omega$-limit set.

	\begin{lem} \label{lem:omega_limit_X_singleton}
	\begin{enumerate}
	\item Let $A \in \mathcal{A}_1$ be a point such that $\omega(A,L_{a,b}) = \{X\}$. Then $A \in W_X^s$.
	\item Let $A \in \Cl\mathcal{A}_1$ be a point such that $\omega(A,L_{a,b}) = \{Y\}$. Then $A \in W_Y^s$.
	\end{enumerate}
	\end{lem}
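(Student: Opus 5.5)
The plan is to exploit hyperbolicity of the saddle locally. Near a saddle fixed point of a piecewise-affine map, every orbit that converges to the fixed point must lie on its stable manifold. The key is that $X$ lies in the interior of $\mathcal{Q}_1$ and $Y$ in the interior of $\mathcal{Q}_3$. Hence there is $\delta>0$ such that $L_{a,b}$ is affine on $B_\delta(X)$, where it equals $X+DL(\cdot-X)$ with eigenvalues $\lambda_X^u,\lambda_X^s$. Likewise $L_{a,b}$ is affine on $B_\delta(Y)$.

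For part (1), take $A$ with $\omega(A,L_{a,b})=\{X\}$. First I would show $A^n\to X$. The orbit lies in $\mathcal{A}_1$ by Lemma~\ref{lem:A1_invariant}. If the orbit were unbounded, some subsequence would escape, and I must rule this out. If the orbit is bounded, then compactness together with $\omega=\{X\}$ gives $A^n\to X$ directly.

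For the unbounded case I would use Lemma~\ref{lem:omega_ICT}-type reasoning, or more simply the structure of $\mathcal{A}_1$. By Lemma~\ref{lem:U_invariant}, either $A\in W_X^{s+}$ (and we are done) or the orbit eventually enters the invariant region $\mathcal{U}$. The set $\mathcal{U}=\mathcal{S}_2\cup\dots\cup\mathcal{S}_5$ is bounded, since it is a finite union of bounded polygons. So the orbit is bounded and $A^n\to X$.

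Then for large $N$ all $A^n$ with $n\ge N$ lie in $B_\delta(X)$, where $L_{a,b}$ is affine. Write $A^N-X=u+s$ in eigen-coordinates. Then $A^{N+k}-X=(\lambda_X^u)^k u+(\lambda_X^s)^k s$. Since $|\lambda_X^u|>1$, boundedness forces $u=0$. So $A^N$ lies on the local stable segment through $X$, which is contained in $W_X^s$. Invariance of $W_X^s$ then gives $A\in W_X^s$.

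For part (2) the same local linear argument works at $Y$, using $\lambda_Y^u>1$ and $|\lambda_Y^s|<1$. Boundedness of the orbit again needs justification. Here $A\in\Cl\mathcal{A}_1=\mathcal{A}_1\cup W_Y^s$. If $A\in W_Y^s$ there is nothing to prove. Otherwise $A\in\mathcal{A}_1$, and $A\notin W_X^{s+}$, since otherwise $\omega(A,L_{a,b})=\{X\}$. Then Lemma~\ref{lem:U_invariant} puts the orbit eventually in the bounded set $\mathcal{U}$. Hence $A^n\to Y$, the orbit eventually stays in $B_\delta(Y)$, and the unstable component must vanish. So $A^N$ lies on the local segment $\overline{TT^1}^{s,Y}\subset W_Y^s$ and $A\in W_Y^s$.

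The main obstacle is the first step in each part, namely deducing convergence $A^n\to X$ (or $Y$) from $\omega=\{X\}$. In a noncompact plane a singleton $\omega$-limit does not by itself give convergence, since the orbit could alternate between approaching $X$ and running off to infinity. This is why I route through the boundedness of $\mathcal{U}$ and Lemma~\ref{lem:U_invariant}. I would double-check that every point of $\mathcal{A}_1\setminus W_X^{s+}$ indeed enters $\mathcal{U}$, in particular points of $\mathcal{S}_1$ lying on the boundary rays $\overline{Z^{-n}C^{-n}}$.

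Should that be delicate, the fallback is a direct local argument. Every time the orbit enters $B_{\delta/2}(X)$ with a nonzero unstable component, it must leave $B_\delta(X)$. Because $\omega=\{X\}$, it must nonetheless re-enter arbitrarily close to $X$ infinitely often. Combining this with the compactness of the annulus $\Cl B_\delta(X)\setminus B_{\delta/2}(X)$, each exit produces a point of the orbit in the annulus. Infinitely many such points, by compactness, yield an $\omega$-limit point in the annulus, and hence different from $X$. That is a contradiction, which gives convergence directly.
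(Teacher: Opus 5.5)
Your proposal is correct and follows essentially the paper's argument: by Lemma~\ref{lem:U_invariant}, the orbit of a point of $\mathcal{A}_1\setminus W_X^{s}$ (resp.\ of $\Cl\mathcal{A}_1\setminus W_Y^s$) eventually lies in the bounded region $\mathcal{U}$, and a bounded orbit with a singleton $\omega$-limit set must converge to that point. Your final local-linearization step is superfluous, because the paper defines $W_X^s$ and $W_Y^s$ as the sets of points whose forward orbits converge to $X$ and $Y$, so $A^n\to X$ (resp.\ $A^n\to Y$) already gives $A\in W_X^s$ (resp.\ $A\in W_Y^s$).
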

	
	\begin{proof}
	We will first prove claim (1). Assume by contradiction that $A \notin W_X^s$, that is, $A \in \mathcal{A}_1 \setminus W_X^s$. Let $\mathcal{U} \subset \mathcal{A}_1$ be the region given by \eqref{eq:region_U}. Lemma \ref{lem:U_invariant} implies that there exists $i \in \mathbb{N}_0$ such that $A^{i} \in \mathcal{U}$ and $A^{i_1} \in \mathcal{U}$ for every $i_1 \geqslant i$. Since $\mathcal{U}$ is bounded, and only finitely many points of $\mathcal{O}^{+}(A,L_{a,b})$ lie outside $\mathcal{U}$, the forward orbit $\mathcal{O}^{+}(A,L_{a,b})$ is bounded.
	
	Since $A \notin W_X^s$, there exists $\varepsilon > 0$ and a sequence of positive integers $(n_k)_{k \in \mathbb{N}}$ such that $n_k \rightarrow \infty$ as $k \rightarrow \infty$ and $\dist(A^{n_k},X) \geqslant \varepsilon$ for every $k \in \mathbb{N}$.
	
	Since $\mathcal{O}^{+}(A,L_{a,b})$ is bounded, $\Cl\mathcal{O}^{+}(A,L_{a,b})$ is compact. Therefore, there exists a convergent subsequence $(A^{n_{k_j}})_{j \in \mathbb{N}}$ such that $A^{n_{k_j}} \rightarrow G$ as $j \rightarrow \infty$. Then $G \in \omega(A,L_{a,b})$ but $\dist(G,X) \geqslant \varepsilon$, which contradicts $\omega(A,L_{a,b}) = \{X\}$. Hence, $A \in W_X^s$, which completes the proof of claim (1).
	
	For claim (2), let $A \in \Cl\mathcal{A}_1 = \mathcal{A}_1 \cup W_Y^s$, and assume by contradiction that $A \notin W_Y^s$. Since $X \notin \omega(A,L_{a,b})$, we have that $A \notin W_X^s$, that is, $A \in \mathcal{A}_1 \setminus W_X^s$, and we obtain that $\mathcal{O}^{+}(A,L_{a,b})$ is bounded as in the proof of claim (1). The rest of the proof follows in the same way as the proof of claim (1).  
	\end{proof}

	\begin{lem} \label{lem:omega_limit_X}
	\begin{enumerate}
	\item Let $A \in \mathcal{A}_1$ be a point such that $X \in \omega(A,L_{a,b})$. Then $A \in W_X^s$.
	\item Let $A \in \Cl\mathcal{A}_1$ be a point such that $Y \in \omega(A,L_{a,b})$. Then $A \in W_Y^s$.
	\end{enumerate}
	\end{lem}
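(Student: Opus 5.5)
We reduce to Lemma \ref{lem:omega_limit_X_singleton} by showing that $\omega(A,L_{a,b})=\{X\}$ (respectively $\{Y\}$). The tools are internal chain transitivity (Lemma \ref{lem:omega_ICT}), Theorem \ref{thm:L2_non_wandering}, and the fact that $\ell$ is an attractor whose basin is described by Lemma \ref{lem:approaching_ell}.

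We start with claim (1). Assume $X\in\omega(A,L_{a,b})$ with $A\in\mathcal{A}_1$. If $A\in W_X^s$ there is nothing to prove, so suppose not. By Lemma \ref{lem:U_invariant}, the forward orbit eventually enters the bounded invariant region $\mathcal{U}$, so $\mathcal{O}^+(A,L_{a,b})$ is precompact. Hence $\omega:=\omega(A,L_{a,b})$ is compact, nonempty, invariant, and internally chain transitive by Lemma \ref{lem:omega_ICT}. Every point of $\omega$ lies in $\omega(A,L_{a,b}^2)\cup L_{a,b}(\omega(A,L_{a,b}^2))$. Since $\ell\cup\{X,Y\}$ is $L_{a,b}$-invariant, Theorem \ref{thm:L2_non_wandering} gives $\omega\subseteq\ell\cup\{X,Y\}$.

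If $\omega\cap\ell\neq\emptyset$, then Lemma \ref{lem:approaching_ell} gives $\dist(A^n,\ell)\to0$. In that case every limit point lies in the closed set $\ell$, so $X\notin\omega$ by Lemma \ref{lem:X_Y_isolated_from_ell}, a contradiction. Therefore $\omega\subseteq\{X,Y\}$.

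The remaining task is to exclude $\omega=\{X,Y\}$. Here we use internal chain transitivity: for every $\varepsilon>0$ there is an $\varepsilon$-chain inside $\omega$ from $X$ to $Y$. But a chain whose points lie in $\{X,Y\}$ and whose steps are smaller than $\dist(X,Y)/2$ can never jump from $X$ to $Y$, since $L_{a,b}(X)=X$. Hence $\omega=\{X\}$, and Lemma \ref{lem:omega_limit_X_singleton}(1) yields $A\in W_X^s$.

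For claim (2), let $A\in\Cl\mathcal{A}_1=\mathcal{A}_1\cup W_Y^s$ with $Y\in\omega(A,L_{a,b})$, and suppose $A\notin W_Y^s$. Then $A\in\mathcal{A}_1$. If $A\in W_X^s$, then $\omega=\{X\}\not\ni Y$, which is impossible. So $A\in\mathcal{A}_1\setminus W_X^s$, the orbit is again bounded by Lemma \ref{lem:U_invariant}, and the same argument gives $\omega\subseteq\{X,Y\}$ and then $\omega=\{Y\}$. Lemma \ref{lem:omega_limit_X_singleton}(2) then gives the contradiction $A\in W_Y^s$.

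The main point requiring care is the passage from $\Omega(L_{a,b}^2)$ to $\omega(A,L_{a,b})$. Since $\omega(A,L_{a,b})=\omega(A,L_{a,b}^2)\cup\omega(A^1,L_{a,b}^2)$ and both pieces lie in $\Omega(L_{a,b}^2)$, this step is routine. The other delicate point is that boundedness of the orbit is needed for Lemma \ref{lem:omega_ICT}, and this is supplied by Lemma \ref{lem:U_invariant}.
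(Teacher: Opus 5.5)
Your proof is correct, but it handles the key step differently from the paper.

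The paper does not use Lemma~\ref{lem:approaching_ell} here. It passes to $L_{a,b}^2$, shows $X\in\omega(A,L_{a,b}^2)$, and applies Lemma~\ref{lem:omega_ICT} to $\omega(A,L_{a,b}^2)$. It picks $\varepsilon$ with $B_\varepsilon(X)\cap(\ell\cup\{Y\})=\emptyset$ (Lemma~\ref{lem:X_Y_isolated_from_ell}) and uses $\omega(A,L_{a,b}^2)\subseteq\ell\cup\{X,Y\}$ from Theorem~\ref{thm:L2_non_wandering}. Any $\tfrac{1}{2}\varepsilon$-chain starting at $X$ is then forced to stay at $X$, so $\omega(A,L_{a,b}^2)=\{X\}$. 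A parity argument upgrades this to $\omega(A,L_{a,b})=\{X\}$. So $\ell$ and $Y$ are excluded at once by a single chain argument, and the only input about $\ell$ is that $X$ is isolated from it.

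You instead transfer the bound $\omega(A,L_{a,b})\subseteq\ell\cup\{X,Y\}$ to $L_{a,b}$ directly. You then remove $\ell$ using its attracting property (Lemma~\ref{lem:approaching_ell} plus $X\notin\ell$). Chain transitivity of $\omega(A,L_{a,b})$ is used only to separate the two fixed points, where it is trivial.

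Your route depends on a stronger ingredient: Lemma~\ref{lem:approaching_ell}, which rests on the trapping region $\mathcal{D}$ and the attracting orbit $\{P,P'\}$. In return it is structurally cleaner. It also produces, along the way, the dichotomy ``$\omega(A,L_{a,b})$ meets $\ell$ or is contained in $\{X,Y\}$'' that later drives Theorem~\ref{thm:A_1_basin_ell}. The paper's argument is leaner in hypotheses: it needs only that $X$ (resp.\ $Y$) is an isolated point of $\ell\cup\{X,Y\}$.
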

	
	\begin{proof}
	We will first prove claim (1). Assume by contradiction that $A \notin W_X^s$, that is, $A \in \mathcal{A}_1 \setminus W_X^s$. As in the proof of Lemma \ref{lem:omega_limit_X_singleton}(1), we obtain that the forward orbit $\mathcal{O}^{+}(A,L_{a,b})$ is bounded, and the same conclusion holds for $\mathcal{O}^{+}(A,L_{a,b}^2)$.
	
	By Lemma \ref{lem:X_Y_isolated_from_ell}, $X$ is isolated from $\ell$, so there exists $\varepsilon > 0$ such that $B_{\varepsilon}(X) \cap \ell = \emptyset$. By possibly taking a smaller $\varepsilon$, we obtain
	\begin{equation}\label{eq:X_isolated_from_ell_Y}
	B_{\varepsilon}(X) \cap (\ell \cup \{Y\}) = \emptyset.
	\end{equation}
	
	By the assumption of the lemma, $X \in \omega(A,L_{a,b})$. Therefore, there exists a sequence of positive integers $(n_k)_{k \in \mathbb{N}}$ such that $A^{n_k} \rightarrow X$ as $k \rightarrow \infty$. We claim that $X \in \omega(A,L_{a,b}^2)$. Indeed, if the sequence $(n_k)_k$ contains infinitely many even terms, the claim holds. If not, then the sequence $(n_k + 1)_k$ contains infinitely many even terms and $A^{n_k + 1} \rightarrow L_{a,b}(X) = X$ as $k \rightarrow \infty$. In either case, $X \in \omega(A, L_{a,b}^2)$.
	
	We claim that $\omega(A,L_{a,b}^2) = \{X\}$. Assume by contradiction that there exists a point $G \in \omega(A,L_{a,b}^2)$ different from $X$. Since $\mathcal{O}^{+}(A,L_{a,b}^2)$ is bounded, it is precompact, so Lemma \ref{lem:omega_ICT} implies that $\mathcal{O}^{+}(A, L_{a,b}^2)$ is internally chain transitive. Therefore, for $\delta := \tfrac{1}{2}\varepsilon$, there exists a $\delta$-chain from $X$ to $G$, that is, a finite sequence of points $I_1,I_2,\ldots,I_n \in \omega(A,L_{a,b}^2)$ such that $I_1 = X$, $I_n = G$, and for every $j \in \{1,2,\ldots,n - 1\}$, we have that
	\begin{equation*}
	\dist(L_{a,b}^2(I_j),I_{j + 1}) < \tfrac{1}{2}\varepsilon.
	\end{equation*}
	In particular, for $j=1$, we have that
	\begin{equation*}
	\dist(L_{a,b}^2(X),I_2) = \dist(X,I_2) < \tfrac{1}{2}\varepsilon.
	\end{equation*}
Since $I_2$ is a point in $\omega(A,L_{a,b}^2) \subseteq \Omega(L_{a,b}^2) \subseteq \ell \cup \{X,Y\}$ (the last inclusion holds by Theorem \ref{thm:L2_non_wandering}), \eqref{eq:X_isolated_from_ell_Y} implies that $I_2 = X$.

	Since $I_2 = X$, the same argument gives $I_3 = X$. Inductively, $I_j = X$ for every $j \in \{1,2,\ldots,n\}$. In particular, $G = I_n = X$, which contradicts $G \neq X$. Therefore, $\omega(A,L_{a,b}^2) = \{X\}$.
	
	We now claim that $\omega(A,L_{a,b}) = \{X\}$. By contradiction, assume that there exists a point $H \in \omega(A,L_{a,b})$, $H \neq X$. Then, there exists a sequence of positive integers $(m_k)_{k \in \mathbb{N}}$ such that $A^{m_k} \rightarrow H$ as $k \rightarrow \infty$. If $(m_k)_{k}$ contains infinitely many even terms, then $H \in \omega(A,L_{a,b}^2)$. Otherwise, $(m_k + 1)_{k}$ contains infinitely many even terms and $A^{m_k + 1} \rightarrow H^1$ as $k \rightarrow \infty$, so $H^1 \in \omega(A,L_{a,b}^2)$. In either case, we obtain a contradiction with $\omega(A,L_{a,b}^2) = \{X\}$.
	
	Therefore, $\omega(A,L_{a,b}) = \{X\}$, and Lemma \ref{lem:omega_limit_X_singleton} implies $A \in W_X^s$, which completes the proof of claim (1).
	
	For claim (2), let $A \in \Cl\mathcal{A}_1 = \mathcal{A}_1 \cup W_Y^s$, and assume by contradiction that $A \notin W_Y^s$. Since $Y \in \omega(A,L_{a,b})$ and $X \neq Y$, the forward orbit of $A$ cannot converge to $X$, so $A \notin W_X^s$, and we again obtain that $\mathcal{O}^{+}(A,L_{a,b})$ and $\mathcal{O}^{+}(A,L_{a,b}^2)$ are bounded. The rest of the proof now follows by exchanging the roles of $X$ and $Y$ in the proof of claim (1). 		 
	\end{proof}

	\begin{theorem} \label{thm:A_1_basin_ell}
	Let $(a,b) \in \mathfrak{R}$. The set $\mathcal{A}_1\setminus W_X^s$ is the basin of attraction of $\ell$, that is, for every point $A \in \mathcal{A}_1 \setminus W_X^s$, we have $\dist(A^n,\ell) \rightarrow 0$ as $n \rightarrow \infty$.
	\end{theorem}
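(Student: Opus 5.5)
The proof proceeds in two directions. The inclusion of the basin of $\ell$ in $\mathcal{A}_1\setminus W_X^s$ is the easy one. Points of $\mathcal{A}_2$ escape to infinity by Proposition \ref{prop:A2_infinity}, so they are not attracted to the bounded set $\ell\subset\mathcal{D}\cup L_{a,b}(\mathcal{D})$. Points of $W_Y^s$ converge to $Y$, and points of $W_X^s$ converge to $X$. Both $X$ and $Y$ are at positive distance from $\ell$ by Lemma \ref{lem:X_Y_isolated_from_ell}, so none of these points lies in the basin. The content of the theorem is therefore the reverse inclusion: every $A\in\mathcal{A}_1\setminus W_X^s$ satisfies $\dist(A^n,\ell)\to 0$.

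Fix $A\in\mathcal{A}_1\setminus W_X^s$. First, $\mathcal{O}^{+}(A,L_{a,b})$ is bounded. This follows as in the proof of Lemma \ref{lem:omega_limit_X_singleton}(1): by Lemma \ref{lem:U_invariant}, some iterate $A^i$ lies in the bounded invariant region $\mathcal{U}$, and the orbit stays there afterwards. Hence $\omega(A,L_{a,b})$ is nonempty and compact.

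Next, I identify where the $\omega$-limit set can lie. Take any $G\in\omega(A,L_{a,b})$. Replacing $G$ by $G^1$ if necessary (the parity trick used in Proposition \ref{prop:A2_infinity} and Lemma \ref{lem:omega_limit_X}), we get $G\in\omega(A,L_{a,b}^2)\subseteq\Omega(L_{a,b}^2)\subseteq\ell\cup\{X,Y\}$ by Theorem \ref{thm:L2_non_wandering}. Since $\ell$ is $L_{a,b}$-invariant and $X,Y$ are fixed, in either case $G\in\ell\cup\{X,Y\}$. So $\omega(A,L_{a,b})\subseteq\ell\cup\{X,Y\}$.

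Now exclude the fixed points:
\begin{itemize}
\item Lemma \ref{lem:omega_limit_X}(1) and $A\notin W_X^s$ give $X\notin\omega(A,L_{a,b})$.
\item Lemma \ref{lem:omega_limit_X}(2) applies since $A\in\mathcal{A}_1\subset\Cl\mathcal{A}_1$. Because $A\notin W_Y^s$ (indeed $A\in\mathcal{A}_1$, which is disjoint from $W_Y^s$), it gives $Y\notin\omega(A,L_{a,b})$.
\end{itemize}
Thus $\omega(A,L_{a,b})$ is a nonempty subset of $\ell$; in particular $\omega(A,L_{a,b})\cap\ell\neq\emptyset$. Lemma \ref{lem:approaching_ell} then yields $\dist(A^n,\ell)\to0$.

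The hard work is already contained in the earlier lemmas, namely the chain-transitivity argument in Lemma \ref{lem:omega_limit_X} and the trapping region behind Lemma \ref{lem:approaching_ell}. The step I would check most carefully is boundedness of the forward orbit. For points in $\mathcal{S}_1\setminus W_X^{s+}$, I need to confirm that $W_X^s\cap\mathcal{A}_1$ beyond $W_X^{s+}$ causes no issue. Points of $\mathcal{U}\cup(\mathcal{S}_1\setminus W_X^{s+})$ cover $\mathcal{A}_1\setminus W_X^{s+}$, and Lemma \ref{lem:U_invariant} handles both pieces. So boundedness holds for all of $\mathcal{A}_1\setminus W_X^{s+}$, which contains $\mathcal{A}_1\setminus W_X^s$.
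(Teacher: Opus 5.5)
Your proof is correct and follows the paper's argument. The steps match: eventual entry into the bounded $L_{a,b}$-invariant region $\mathcal{U}$ (Lemma \ref{lem:U_invariant}), the parity trick placing an $\omega$-limit point in $\Omega(L_{a,b}^2)\subseteq\ell\cup\{X,Y\}$ (Theorem \ref{thm:L2_non_wandering}), exclusion of $X$ and $Y$ by Lemma \ref{lem:omega_limit_X}, and the conclusion via Lemma \ref{lem:approaching_ell}. Your extra check of the reverse inclusion (orbits in $\mathcal{A}_2$, $W_X^s$ and $W_Y^s$ are not attracted to $\ell$, by Proposition \ref{prop:A2_infinity} and Lemma \ref{lem:X_Y_isolated_from_ell}) is valid and spells out what the paper leaves implicit. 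One small point: passing from $G^1\in\ell$ back to $G\in\ell$ uses $L_{a,b}^{-1}(\ell)=\ell$, not just $L_{a,b}(\ell)\subseteq\ell$, though this does hold.
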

	
	\begin{proof}
	Recall that $\mathcal{A}_1$ is the union of regions $\mathcal{S}_i$, $i \in \{1,2,3,4,5\}$, as introduced in Lemma \ref{lem:A1_invariant}; see Figure \ref{figure:lem_A1_invariant}. In addition, let $\mathcal{U}=\mathcal{S}_2 \cup \mathcal{S}_3 \cup \mathcal{S}_4 \cup \mathcal{S}_5$, as in \eqref{eq:region_U}.
	
	Let $A \in \mathcal{A}_1 \setminus W_X^s$ be an arbitrary point. We claim that $\mathcal{U}$ contains all except at most finitely many forward iterates of $A$ under $L_{a,b}$. Indeed, if $A \in \mathcal{U}$, then Lemma \ref{lem:U_invariant}(1) implies that $\mathcal{O}^{+}(A,L_{a,b}) \subseteq \mathcal{U}$. Otherwise, if $A \in \mathcal{S}_1$, then by Lemma \ref{lem:U_invariant}(2), there exists $j \in \mathbb{N}$ such that $A^{j} \in \mathcal{U}$, and $\mathcal{O}^{+}(A^j,L_{a,b}) \subseteq \mathcal{U}$. In either case, there exists $n_0 \in \mathbb{N}_0$ such that $A^n \in \mathcal{U}$ for all $n \geqslant n_0$. 
	
	Since $\mathcal{U}$ is bounded, $\Cl \mathcal{U}$ is compact, so there exists a subsequence $(A^{n_k})_{k \in \mathbb{N}}$ converging to a point $F \in \Cl \mathcal{U}$. We have that $F \in \omega(A,L_{a,b})$.
	
	If the sequence $(n_k)_{k \in \mathbb{N}}$ contains infinitely many even terms, then $F \in \omega (A,L_{a,b}^2)$ as well. Otherwise, the sequence $(n_k+1)_{k \in \mathbb{N}}$ contains infinitely many even terms, and the corresponding sequence of iterates $(A^{n_k+1})_{k \in \mathbb{N}}$ converges to $F^1$ due to the continuity of $L_{a,b}$. Therefore, by possibly replacing $F$ with its forward image, we can assume that $F \in \omega(A,L_{a,b}^2)$. 
	
	Since
	\begin{equation*}
	F \in \omega(A,L_{a,b}^2) \subseteq \Omega(L_{a,b}^2) \subseteq \ell \cup \{X,Y\},
	\end{equation*}
where the last inclusion holds due to Theorem \ref{thm:L2_non_wandering}, we have the following possibilities:
	\begin{enumerate}[label=(\roman*)]
	\item $F = X$. In this case, $X \in \omega(A,L_{a,b})$, so Lemma \ref{lem:omega_limit_X}(1) would imply that $A \in W_X^s$, which contradicts the assumption that $A \in \mathcal{A}_1 \setminus W_X^s$.
	\item $F = Y$. As in the previous case, $Y \in \omega(A,L_{a,b})$, so Lemma \ref{lem:omega_limit_X}(2) would imply that $A \in W_Y^s$, which is not possible since $W_Y^s \cap \mathcal{A}_1 = \emptyset$.
	\item $F \in \ell$. Here, we have that $F \in \omega(A,L_{a,b}) \cap \ell \neq \emptyset$, so Lemma \ref{lem:approaching_ell} implies that $\dist(A^{n},\ell) \rightarrow 0$ as $n \rightarrow \infty$, which completes the proof.
	\end{enumerate}	   
	\end{proof}
	
	\begin{rem}
	Therefore, for parameters in $\mathfrak{R}$, the regions $\mathcal{A}_1$ and $\mathcal{A}_2$ induce a decomposition of the phase space into a disjoint union
	\begin{equation*}
	\mathbb{R}^2 = \mathcal{A}_1 \cup W_Y^s \cup \mathcal{A}_2,
	\end{equation*}
	which allows a complete classification of the asymptotic behavior of points, summarized as follows. For a point $A \in \mathbb{R}^2$, the forward orbit of $A$ under $L_{a,b}$ falls into exactly one of the following three cases:
	\begin{itemize}
		\item it converges to the fixed point $X$ (resp.\ $Y$) if $A$ lies on $W_X^s$ (resp.\ $W_Y^s$),
		\item it converges to $\ell = \Cl W_X^u \setminus W_X^u$ if $A$ lies in $\mathcal{A}_1 \setminus W_X^{s}$, or
		\item it tends to infinity in the third quadrant if $A$ lies in $\mathcal{A}_2$.
	\end{itemize} 
	\end{rem}

	\section{Concluding remarks} \label{sec:concluding_rems}
	
	Let $\mathcal{X}$ be a metric with a metric $d$, and $f \colon \mathcal{X} \rightarrow \mathcal{X}$. Recall that a non-empty, compact set $\Lambda \subset \mathcal{X}$ is said to be an \emph{attractor} for $f$ if $f(\Lambda) = \Lambda$ and there exists an open neighborhood $\mathcal{N}$ of $\Lambda$ in $\mathcal{X}$ such that
	\begin{equation*}
	\lim_{n \rightarrow \infty}\sup_{x \in \mathcal{N}} d(f^{n}(x),\Lambda) = 0.
	\end{equation*}
	
	In our case, when $(a,b) \in \mathfrak{R}$, the accumulation set $\ell$ arises as the intersection of compact nested sets $(L_{a,b}^{k}(\mathcal{D} \cup L_{a,b}(\mathcal{D})))_{k \in \mathbb{N}}$, so $\ell$ is compact and $L_{a,b}(\ell) = \ell$. Moreover, we have shown that the open region $\mathcal{A}_1 \setminus W_X^s$ is the basin of attraction of $\ell$. Therefore, $\ell$ is an attractor for $L_{a,b}$ in the sense of the above definition.
	
	At the beginning of Section \ref{sec:basin_attraction}, we already mentioned that there exists a parameter region $\mathcal{R} \subset \mathfrak{R}$ in which $\ell = \{P,P'\}$, that is, $\ell$ reduces to an attracting periodic orbit. This equality extends even to some parameters in $\mathfrak{R} \setminus \mathcal{R}$, so it is natural to ask whether the equality holds on the whole $\mathfrak{R}$.
	
	As already discussed in \cite{kilassa2026accumulation}, in a private correspondence, Y.\ Ishii and D.\ Sands pointed the author to examples of parameter values suggesting that $\ell$ might, in general, have a complex, nontrivial structure. The examples include the pair $(a,b) = (1.6,0.61)$, where transverse homoclinic intersections for period-six saddles should occur, and a small neighborhood of $(a,b) = (1.5,0.5)$.
	
	On the other hand, there are already well-known examples of strange attractors for $L_{a,b}$: the existence of these attractors has been rigorously proved in \cite{misiurewicz1980strange} for an open set of parameters which is now known as the Misiurewicz parameter set. Even if our accumulation set $\ell$ possessed a more complex structure, it would still be different from these strange attractors since they are the closure of the unstable manifold $W_X^u$, while $\ell$ is the $\omega$-limit set of that manifold but does not contain it.
	
	Thus, although the present work, together with the previous one in \cite{kilassa2026accumulation}, establishes that $\ell$ is an attractor of the Lozi map and, at the same time, contains all of its entropy-inducing dynamics, the structure of this set remains an open question. A deeper understanding of its geometry and dynamics requires further research and may ultimately lead to the identification of a new class of attractors for Lozi maps, distinct from the examples known to date.


\begin{thebibliography}{99}
		\bibitem{baptista2009basin}
		D.\ Baptista, R. Severino and S. Vinagre, The basin of attraction of Lozi mappings, \emph{Int.\ J.\  Bifurcat.\ Chaos} {\bf 19} (2009), 1043-1049	
	
		\bibitem{boronski2023densely}
		J.\ Boro\'nski, S.\ \v{S}timac, Densely branching trees as models for H\'enon-like and Lozi-like attractors, \emph{Adv.\ Math.}\ {\bf 429} (2023), 109191
		
		\bibitem{cao2000nonwandering}
		Y.\ Cao, J.\ Min Mao, The non-wandering set of some H\'enon maps, \emph{Chaos, Solitons Fractals} {\bf 11} (2000), 2045--2053
		
		\bibitem{hirsch2001chain}
		M.\ W.\ Hirsch, H.\ L.\ Smith, X.-Q.\ Zhao, Transitivity, Attractivity, and Strong Repellors for Semidynamical Systems, \emph{J.\ Dyn.\ Diff.\ Equ.}\ {\bf 13} (2001), 107--131
		
		\bibitem{ishii1997towards1}
		Y.\ Ishii, Towards a kneading theory for Lozi mappings I: A solution of the prunning front conjecture and the first tangency problem, \emph{Nonlinearity} {\bf 10} (1997), 731--747
		
		\bibitem{kilassa2026accumulation}
		K.\ Kilassa Kvaternik, Accumulation sets and zero entropy dynamics in the Lozi map, \emph{Chaos}, \url{https://doi.org/10.1063/5.0344038}
		
		\bibitem{kilassa2026tangential}
		K.\ Kilassa Kvaternik, Tangential Homoclinic Points for Lozi Maps, \emph{J.\ Dyn.\ Diff.\ Equ.}, \url{https://doi.org/10.1007/s10884-026-10505-2}
		
		\bibitem{kilassa2022tangential}
		K.\ Kilassa Kvaternik, Tangential homoclinic points locus of the Lozi maps and applications, Doctoral Thesis, University of Zagreb, Croatia (2022), available online: \url{https://repozitorij.pmf.unizg.hr/islandora/object/pmf:11546}
		
		\bibitem{lozi1978attracteur}
		R.\ Lozi, Un attracteur \'etrange du type attracteur de H\'enon, \emph{J.\ Physique (Paris)} {\bf 39} (Coll. C5) (1978), 9--10
		
		\bibitem{misiurewicz1980strange}
		M.\ Misiurewicz, Strange attractor for the Lozi mappings, \emph{Ann.\ New York Acad.\ Sci.}\ {\bf 357} (1980) (Nonlinear Dynamics), 348--358
		
		\bibitem{misiurewicz2024zero}
		M.\ Misiurewicz, S. \v{S}timac, The zero entropy locus for the Lozi maps, \emph{Monatsh.\ Math.}\ {\bf 208} (2025), 471--484
		
		\bibitem{shub1987global}
		M.\ Shub, \emph{Global stability of dynamical systems}, Springer-Verlag New York, 1987
		
		
		
	\end{thebibliography}
\end{document}